\newcommand\cyr
\renewcommand\rmdefault{wncyr} \renewcommand\sfdefault{wncyss} \renewcommand\encodingdefault{OT2} \normalfont
\DeclareTextFontCommand{\textcyr}{\cyr}    
\newtheorem{theorem}{Theorem}[section]
\newtheorem{Proposition}[theorem]{Proposition}
\newtheorem{proposition}[theorem]{Proposition}
\newtheorem{corollary}[theorem]{Corollary}
\newtheorem{remark}[theorem]{Remark}
\newtheorem{lemma}[theorem]{Lemma}
\newtheorem{definition}[theorem]{Definition}
\newtheorem{example}[theorem]{Example}
\def\sym#1{\mathrm{Sym}(#1)}
\def\c#1{\mathrm{con}_{#1}}
\def\aut#1{\mathrm{Aut}(#1)}
\def\aff#1{\mathrm{Aff}#1}
\def\Aff#1{\mathrm{Aff}#1}
\def\lmlt{\mathrm{LMlt}}
\def\dis{\mathrm{Dis}}
\def\Q{\mathcal{Q}_{\mathrm{Hom}}}
\def\GL#1#2{\mathrm{GL}_{#1}(#2)}
\def\setof#1#2{\{#1\, : \,#2\}}
\def\P{\mathcal{P}}
\newcommand{\BlocS}[2]{\dis(#1)_{[#2]}}
\newcommand{\LP}[1]{\langle L_{#1}\rangle^\P}
\newcommand*\xbar[1]{%
   \hbox{%
     \vbox{%
       \hrule height 0.5pt % The actual bar
       \kern0.5ex%         % Distance between bar and symbol
       \hbox{%
         \kern-0.1em%      % Shortening on the left side
         \ensuremath{#1}%
         \kern-0.1em%      % Shortening on the right side
       }%
     }%
   }%
} 
\makeindex \setlength{\evensidemargin}{-0.04cm}
\subjclass[2010]{17D99, 20N02.}
\keywords{Quandles, automorphisms of quandles, left-distributivity.}
\title{Principal and Doubly Homogeneous Quandles}
\author{Marco Bonatto}
\address[M. Bonatto]{IMAS--CONICET and Universidad de Buenos Aires, 
Pabell\'on~1, Ciudad Universitaria, 1428, Buenos Aires, Argentina}
\email{marco.bonatto.87@gmail.com}
\begin{document}
\maketitle
%\tableofcontents 

\section*{Abstract}
In the paper we describe the class of principal quandles and we show that connected quandles can be decomposed as a disjoint union of principal quandles. We also prove that simple affine quandles are finite and they can be characterized among finite simple quandles by several different equivalent properties, such as for instance being doubly homogeneous (i.e. having a doubly transitive automorphism group). A complete description of finite doubly-homogeneous quandles is provided extending the result of \cite{V} and solving \cite[Problem 6.7]{2trans}. We also provide a classification of connected cyclic quandles with several fixed points independently from \cite{CQ}.
\section*{Introduction}

%\subsection*{Motivation}
%\subsection*{The main results}
Quandles are binary algebras developed in connection with knot theory \cite{J,Matveev}, Hopf Algebras \cite{AG} and the set theoretical solution of the Yang-Baxter equation \cite{EGS,ESS}. Most of the construction and tools used in quandle theory are based on groups \cite{EGS,HSV} and modules \cite{Hou}. In a recent paper \cite{CP} we developed a commutator theory for racks and quandles in the sense of \cite{comm}, where the notion of {\it abelianness} and {\it nilpotence} are developed for general algebras. In this paper we are going to use this universal algebraic viewpoint to investigate some classes of quandles: {\it principal}, {\it doubly homogeneous} and {\it cyclic}  quandles (see Section \ref{Sec: principal}, \ref{Sec: doubly} and \ref{Sec:cyclic} respectively). \\
The class of {\it principal} quandles naturally generalizes the class of {\it affine} or {\it Alexander} quandles and it has also been studied under different names as {\it generalized Alexander} quandles \cite{C2,GenAlex}. Moreover they are provided by a {\it right linear} construction over groups (in the sense of \cite{Stanos}).  The study of principal quandles provides useful tools to understand the other classes of quandles investigated in the present paper and quandles in general.

%Connected quandles are a relevant class of quandles for knot theoretical applications and they can be constructed using transitive groups \cite{HSV}. \\
It is known that every quandle with at least four elements and a $3$-transitive automorphism group is projection \cite[Proposition 5]{McC} (recall that a group $G$ acting on a set $Q$ is called $k$-transitive if it is transitive on the set of $k$-tuples with different entries). Quandles with transitive automorphism groups are called {\it homogeneous} quandles and they are characterized by a well-known construction over groups \cite[Theorem 7.1]{J}. The characterization of quandles with $k$-transitive automorphism group for $k\geq 2$ is an open problem stated in \cite[Problem 6.7]{2trans}. The case $k\geq 3$ was studied and solved in \cite{Russi2}. We call quandles with $2$-transitive automorphism group {\it doubly homogeneous} and their charaterization in still open. A partial answer to this problem is given in \cite{V}, where the class of finite quandles with doubly transitive left multiplication group (i.e. doubly transitive quandles) is completely understood (see \cite[Corollary 4]{V}). These quandles are also characterized by a precise cyclic structure of the left multiplications given by one fixed point and one non-trivial cycle. This class is contained into the class of {\it doubly homogeneous} quandles and into the class of {\it cyclic quandles}, defined and investigated in \cite{CQ} as the class of finite quandles which cycle structure of left multiplications is given by several fixed points and one non trivial cycle. In this paper we provide a characterization of the first class (answering to  \cite[Problem 6.7]{2trans}) and we show an alternative proof of the classification of connected cyclic quandles (the same classification is obtained in \cite{CQ} using combinatorial techniques).

%\subsection*{Main results}
The contents of the paper are organized as follows: in Section \ref{Sec: prelim} we collect some basic results on quandles and we introduce two new relations for quandles: the first one is related to projection subquandles and the second one was already defined in \cite{CP} in connection with central congruences. We define the class of {\it semiregular quandles} as the class of quandles with semiregular displacement group (of which the class of principal quandles is an instance). We extend some known result for principal quandles to this class (indeed they depend just on the semiregularity of the displacement group, see Section \ref{semiregular quandles}).

In Section \ref{Sec: principal} we study congruences and automorphisms of connected principal quandles and we show that every connected quandle is a disjoint union of isomorphic copies of a principal quandle (Theorem \ref{decomposition}). In Section \ref{Sec: isog} we deal with right linear left distributive quasigroups, studied under the name of {\it isogroups} \cite{GALK2,Vla}. The finite algebras of this class are the finite principal latin quandles.\\
The class of {\it affine} quandles is one of the most studied class of quandles and it is an example of a class of principal quandles: we show  that there are no infinite simple affine quandles (Theorem \ref{no infinite simple}). 

In Section \ref{Sec: doubly} we investigate a particular class of simple quandles, namely {\it strictly simple} quandles (i.e. quandles with no proper subquandle). Theorem \ref{minimal iff} shows that finite simple quandles are latin if and only if they are strictly simple, establishing the converse of Theorem 3.4 of \cite{Va} for quandles, extending a result known for left-distributive quasigroup \cite{GALK}. Moreover finite simple latin quandles are affine and doubly homogeneous and their representation over elementary abelian groups is explained in Proposition \ref{min are affine} (already shown in \cite{AG, JS} as the family of simple quandles with abelian displacement group). Moreover, strictly simple quandles are the only finite simple principal quandles (indeed a principal simple quandle $Q$ is faithful and then latin by virtue of Proposition \ref{On maximal proj sub}). 

Using the principal decomposition for quandles we show that a finite quandle is doubly homogeneous if and only if it is a power of a strictly simple quandle (see Theorem. \ref{doubly-hom are power of min}. In the proof we make use of Maschke theorem for representations of finite groups).

%
%\begin{theorem*}
%Let $Q$ be a finite non-projection quandle. The following are equivalent:
%\begin{itemize}
%\item[(i)] $Q$ is doubly-homogeneous quandle.
%\item[(ii)] $Q\cong {M}^n$ where $M$ is a strictly simple quandle.
%\end{itemize}
%\end{theorem*}
%
In Section \ref{sec_extensions} we investigate extensions of strictly simple quandles by projection quandles of prime size (in the same direction of \cite{Claw}) and we provide strong constrains on the size of such extensions in Theorem \ref{extension of minimal by proj of prime}. In Section \ref{Sec:cyclic} we show an independent proof of the classification of finite connected cyclic quandles using the contents of the previous Section (Theorem \ref{cyclic}).
\subsection*{Notation and terminology}
Given an algebra $A$ (i.e. a set with an arbitrary set of operations)  we denote by $\aut{A}$ the group of its automorphism. A congruence of $A$ is an equivalence relation which respects the algebraic structure. The congruences of $A$ form a lattice denoted by $Con(A)$ with minimum $0_Q=\setof{(a,a)}{a\in Q}$ and maximum $1_Q=A\times A$. Note that homomorphic images and congruences are essentially the same thing since kernels of homomorphisms are congruence. If $\alpha\in Con(A)$ the algebra $A/\alpha$ is called factor algebra. We denote by $\setof{\gamma_n(A)}{n\in \mathbb{N}}$ and by $\setof{\gamma^{n}(A)}{n\in \mathbb{N}}$ the congruences in the lower central series and of the derived series of $A$ and by $\zeta_A$ its center (defined in analogy with group theory to capture the notion of solvability and centrality, see \cite{CP, comm}).\\
By $\textbf{H}(A)$, $\textbf{S}(A)$ and $\textbf{P}(A)$ we denote respectively the set of isomorphism classes of factors, of subalgebras and of powers of $A$. 
%We denote by $Sg(S)$ the smallest subalgebra containing the subset $S\subseteq A$ and $Cg(S)$ the smallest congruence containing the set of pairs $S\subseteq A\times A$. 
For further details about the universal algebraic definitions and construction we refer the reader to \cite{UA}. 

We denote group operations just by juxtaposition or by $+$ if the operation is commutative. Given an element $g$ of a group $G$ we denote by $\widehat{g}$ its inner automorphism, by $N_G(S)$ (and resp. $C_G(S)$) the normalizer (resp. centralizer) of a subset $S\subseteq G$. The core of a subgroup $H$ is the biggest normal subgroup of $G$ contained in $H$ and it is denoted by $Core_G(H)$. If $\rho$ is an action of $G$ on a set $Q$ we denote the point-wise stabilizer of $a$ in $G$ by $G_a$ and the set-wise point-stabilizer of a subset $S$ by $G_S=\setof{g\in G}{g(S)=S}$. The orbit of $a$ under the action of $G$ will be denoted by $a^G$. A group is called semiregular if $G_a=1$ for every $a\in G$ and regular if it semiregular and transitive. Note that the pointwise-stabilizer of a transitive group is normal if and only if it is trivial (the stabilizers of a transitive group are all conjugate).
\section{Preliminary results}\label{Sec: prelim}
\subsection{Quandles}
A quandle is an idempotent left-distributive left quasigroup, i.e. it is a binary algebra $(Q,\ast,\backslash)$ such that
\begin{eqnarray*}
a\ast (a\backslash b) &=& a\backslash (a\ast b)=b,\\
a\ast( b\ast c)&=&(a\ast b)\ast (a\ast c),\\
a\ast a &=& a.
\end{eqnarray*}
for every $a,b\in Q$. Equivalently a quandle is binary algebra $(Q,*)$ such that the left multiplication mapping $L_a:b\mapsto a\ast b$ is in the stabilizer of $a$ of the automorphism group of $Q$ for every $a\in Q$.
\begin{example}\label{examples}
\begin{itemize}
%\item[(i)] The one element set is called the \emph{trivial} quandle.
\item[(i)] Any set $Q$ with the operation $a\ast b=b$ for every $a,b\in Q$ is a \emph{projection quandle}. If $|Q|=n$ we denote such quandle by $\mathcal{P}_n$.
\item[(ii)] Let $G$ be a group and $H\subseteq G$ a subset closed under conjugation. For $a$, $b\in H$, let $a\ast b = aba^{-1}$. Then $Conj{(H)}=(H,\ast,\backslash)$ is a quandle, called \emph{conjugation quandle} over $H$.
\end{itemize}
\end{example}
\begin{example}\label{example2}
Let $G$ be a group, $f\in  \aut{G}$ and $H \leq Fix(f)=\setof{a\in G}{f(a)=a}$. Let $G/H$ be the set of left cosets of $H$ and the multiplication defined by
\begin{displaymath}
   aH\ast bH=af(a^{-1}b)H.
\end{displaymath}
%Then $\Q(G,H,f)=(G/H,\ast,\backslash) $ is a quandle, called a \emph{coset} quandle. 
%\item Let $X$ be a quandle and $L_x^{2}=id_X$ for every $x\in X$, then $X$ is called \emph{involutory}.
The algebra $(G/H,*)$ is a quandle  denoted by $\Q(G,H,f)$. If $H=1$ then $Q$ is called \emph{principal} (over $G$) and it will be denoted just by $\Q(G,f)$. If $G$ is abelian then $Q$ is called \emph{affine} (over $G$) and in this case we use the notation $\aff(G,f)$. 
\end{example}
 The group generated by the left multiplications mappings is called the {\it left multiplication group} and it is denoted by $\lmlt(Q)$. The {\it displacement group}, defined as $\dis(Q)=\langle \setof{L_a L_b^{-1}}{a,b\in Q}\rangle$ is a very important object in the theory of quandles as many properties of quandles are determined by its group-theoretical properties. Moreover $\lmlt(Q)=\dis(Q)\langle L_a\rangle$ for every $a\in Q$ and so the orbits of $\dis(Q)$ and $\lmlt(Q)$ coincide and they are called connected components. \\
 A quandle $Q$ is said to be (doubly) {\it homogeneous} if $\aut{Q}$ acts (doubly) transitively and {\it connected} if $\dis(Q)$ acts transitively. Clearly every connected quandle is homogeneous, but the orbits with respect these two groups can be very different. Let $\mathcal{P}_n$ be the projection quandle of size $n$, then $\aut{\mathcal{P}_n}=\sym{n}$ and $\dis(\mathcal{P}_n)=1$. Hence, it is homogeneous but totally disconnected.

The construction given in Example \ref{example2} completely characterizes homogeneous quandles \cite[Theorem 7.1]{J}, and so this construction is also called homogeneous representation. 
%
%(note that affine representations yield to principal representations by factoring out the subgroup $H$)
%
Connected quandles have a representation over their displacement, i.e. any connected quandle $Q$ is isomorphic to $\Q(\dis(Q),\dis(Q)_a,\widehat{L}_a)$ \cite[Theorem 4.1]{HSV}. Moreover, connected abelian quandles are affine \cite[Theorem 2.2]{JZ2}.

Congruences of a quandle $Q$ induce surjective morphisms between the displacement group of $Q$ and the displacement group of its factors. Indeed the mapping
\begin{displaymath}
\pi_\alpha :\dis(Q)\longrightarrow \dis(Q/\alpha),\quad L_{x} L_{y}^{-1} \mapsto L_{[x]}\ldots L_{[y]}^{-1}
\end{displaymath}
can be extended to a well-defined surjective group morphism for every $\alpha\in Con(Q)$ (see \cite[Lemma 1.8]{AG}). The kernel of $\pi_\alpha$ has the following characterization:
\begin{displaymath}
\dis^{\alpha}=\setof{h\in \dis(Q)}{[h(a)] =[a], \text{ for every } a\in Q}= \bigcap_{a\in Q/\alpha} \dis(Q)_{[a]},
\end{displaymath}
where $\dis(Q)_{[a]_\alpha}$ is the set-wise stabilizer in $\dis(Q)$. Note that the stabilizer of a block of $\alpha$ is $\dis(Q)_{[a]_\alpha}=\pi_\alpha^{-1}(\dis(Q/\alpha)_{[a]_\alpha})$ and if $Q/\alpha$ is connected then $\dis^\alpha=Core_{\dis(Q)}(\dis(Q)_{[a]})$ for every $a\in Q$. The set-wise stabilizer can be used to check connectedness of a quandle (we will use this criterion with no further reference through all the paper):
\begin{lemma}\label{block stab} \cite[Proposition 1.3]{GB}
Let $Q$ be a quandle and $\alpha\in Con(Q)$. Then	$Q$ is connected if and only if $Q/\alpha$ is connected and $\dis(Q)_{[a]_\alpha}$ is transitive on $[a]_\alpha$ for every $a\in Q$.
\end{lemma}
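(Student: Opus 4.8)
The plan is to prove the two implications separately, in both directions exploiting that the $\alpha$-classes form a system of blocks for the action of $\dis(Q)$ on $Q$ — they are blocks because $\alpha$ is a congruence and $\dis(Q)\leq\lmlt(Q)\leq\aut{Q}$ — together with the surjectivity of $\pi_\alpha\colon\dis(Q)\to\dis(Q/\alpha)$ recalled above. The one preliminary fact I would isolate first is the equivariance relation $[h(x)]_\alpha=\pi_\alpha(h)([x]_\alpha)$ for every $h\in\dis(Q)$ and $x\in Q$: it holds on the generators $h=L_xL_y^{-1}$ since both sides send $[z]_\alpha$ to $[x]_\alpha\ast([y]_\alpha\backslash[z]_\alpha)$ (using that the canonical projection $Q\to Q/\alpha$ is a quandle homomorphism), and it then propagates to arbitrary products because both $h\mapsto (x\mapsto[h(x)]_\alpha)$ and $h\mapsto\pi_\alpha(h)$ are homomorphisms into $\sym{Q/\alpha}$.

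For the forward implication, assume $Q$ is connected. Pushing the transitive action of $\dis(Q)$ on $Q$ forward along the equivariant surjection $Q\to Q/\alpha$ and using that $\pi_\alpha$ is onto, we get that $\dis(Q/\alpha)$ is transitive on $Q/\alpha$, i.e. $Q/\alpha$ is connected. For the block stabilizers, fix $a\in Q$ and $b,c\in[a]_\alpha$; by transitivity there is $h\in\dis(Q)$ with $h(b)=c$, and then $h([a]_\alpha)$ is a block containing $c$, hence equals the unique block $[a]_\alpha$ containing $c$, so $h\in\dis(Q)_{[a]_\alpha}$. Thus $\dis(Q)_{[a]_\alpha}$ is transitive on $[a]_\alpha$.

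For the converse, assume $Q/\alpha$ is connected and $\dis(Q)_{[a]_\alpha}$ is transitive on $[a]_\alpha$ for every $a\in Q$. Given $a,b\in Q$, transitivity of $\dis(Q/\alpha)$ together with surjectivity of $\pi_\alpha$ yields $g\in\dis(Q)$ with $\pi_\alpha(g)([a]_\alpha)=[b]_\alpha$; by the equivariance identity $[g(a)]_\alpha=[b]_\alpha$, so $g(a)\in[b]_\alpha$. Applying the hypothesis to the class $[b]_\alpha$ gives $k\in\dis(Q)_{[b]_\alpha}$ with $k(g(a))=b$, and then $kg\in\dis(Q)$ sends $a$ to $b$. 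Hence $\dis(Q)$ is transitive and $Q$ is connected.

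There is no real obstacle here; the statement is essentially the standard fact that a transitive action is equivalent to a transitive action on a block system together with transitivity of each block stabilizer, transported through the surjection $\pi_\alpha$. The only steps needing (routine) care are the verification of the equivariance identity on generators and the elementary block-stabilizer argument in the forward direction.
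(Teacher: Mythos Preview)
Your proof is correct; it is the standard block-system argument, carried out cleanly via the equivariance $[h(x)]_\alpha=\pi_\alpha(h)([x]_\alpha)$ and the surjectivity of $\pi_\alpha$. The paper does not actually supply a proof of this lemma: it merely cites \cite[Proposition 1.3]{GB} and uses the statement as a black box, so there is no in-paper argument to compare yours against.
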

For every congruence $\alpha$ we can define the {\it displacement group relative to the congruence $\alpha$} as $\dis_\alpha=\langle\setof{L_a L_b^{-1}}{a\, \alpha\, b}\rangle$ which is a normal subgroup of $\lmlt(Q)$ contained in the kernel $\dis^\alpha$. Properties as abeliannes and centrality of congruences are completely determined by the properties of the relative displacement groups (see \cite[Theorem 1.1]{CP}).

On the other hand for every subgroup in $Norm(Q)=\setof{N\triangleleft \lmlt(Q)}{N\leq \dis(Q)}$ we can define the congruence $\c{N}=\setof{(a,b)\in Q\times Q}{L_a L_b^{-1}\in N}$. The pair of mappings $\alpha\mapsto \dis_\alpha$, $N\mapsto \c{N}$ provides a Galois correspondence between $Con(Q)$ and $Norm(Q)$ and its properties are described in Section 3.3 of \cite{CP}. This correspondence can be used to obtain information on the displacement group from the congruence lattice and vice versa. Moreover the orbit decomposition with respect to the action of $N\in Norm(Q)$ is also a congruence and we denote it by $\mathcal{O}_N$ \cite[Lemma 2.6]{CP}.

The set $L(Q)=Conj(\setof{L_a}{a\in Q})$ is a conjugation quandle and $\Lambda_Q:Q\to L(Q)$ is a surjective quandle morphism. We denote by $\lambda_Q$ the kernel of such homomorphism. A quandle is called:
\begin{itemize}
\item[(i)]{\it faithful } if $\Lambda_Q$ is injective, i.e. $\lambda_Q=0_Q$;
\item[(ii)] {\it crossed set} if $a\ast b=b$ implies $b\ast a=a$ for every $a,b\in Q$;
\item[(iii)] {\it latin} if all right multiplications $R_a: b\mapsto b\ast a$ are bijective.
\end{itemize}
Note that any latin quandle is faithful and any faithful quandle is a crossed set. If $Q$ is faihtful the pointwise stabilizer of an element $a\in Q$ is $\dis(Q)_a=Fix(\widehat{L}_a)$. \\ 
Latin quandles can be also understood as left distributive (LD) quasigroup including right division $a/b=R_b^{-1}(a)$ as a basic operation, but congruence and subalgebras of $(Q,*,\backslash)$ and $(Q,*,\backslash,/)$ might be different. Nevertheless if $Q$ is finite nothing changes.\\
We refer the reader to \cite{AG,HSV,J} for further details on the basics of quandle theory. 
\subsection{Projection subquandles}
 Let $Q$ be a quandle and let $\P$ be the relation defined as
\begin{equation}\label{relation P}
a \, \P \, b \quad \text{ if and only if } \{a,b\}=\mathcal{P}_2.
\end{equation}
 The relation $\P$ is symmetric and reflexive, but in general is not an equivalence. Let $[a]_\P=\setof{b\in Q}{a\, \P\, b}$. %\comment{change the notation to $\{a\}_\P$?}. 
 Clearly $[a]_{\lambda_Q}\subseteq [a]_{\P}\subseteq Fix(L_a)$ and $[a]_\P$ is a subquandle of $Q$. Indeed
if $b,c\in [a]_\P$, then $b,c\in Fix(L_a)$ and $L_a$ and $L_b$ commute. Therefore 
\begin{eqnarray*}
L_b^{\pm 1}(c)\ast a &=& L_b^{\pm 1} L_c L_b^{\mp 1}(a)=L_b^{\pm 1} L_c L_b^{\mp 1}(a)=a\\ 
a\ast L_b^{\pm 1}(c) &=& L_a L_{b}^{\pm 1}(c)=L_b^{\pm 1}L_a(c)=L_b^{\pm 1}(c).
\end{eqnarray*}
Hence, $L_b^{\pm 1}(c)\in [a]_\P$. A quandle $Q$ is a crossed set if and only if $[a]_{\P}=Fix(L_a)$ for every $a\in Q$.
%If $Q$ is a crossed set and $b\in Fix(L_a)$, then $b\ast a=a$, i.e. $a\,\P\, b$. 

Projection subquandles of $Q$ containing $a\in Q$ are contained in $[a]_\P$. Therefore, a quandle has no projection subquandles if and only if $\P=0_Q$. Moreover, the action of the automorphism group respects the relation $\P$. Indeed, if $\{a,b\}$ is a projection subquandle, so it is $\{h(a),h(b)\}$ for every $h\in \aut{Q}$. Hence we can define:
\begin{eqnarray}
\mathrm{Aut}^{\P}&=&\setof{h\in\aut{Q}}{h(a)\, \P\, a\, \text{for every }a\in Q}, \notag\\
 \lmlt^\P &=& \lmlt(Q)\bigcap \mathrm{Aut}^\P,\notag
\end{eqnarray}
Note that both $\mathrm{Aut}^\P$ and $\lmlt^\P$ are normal subgroups of $\aut{Q}$ and the orbit decomposition with respect to the action of $\lmlt^\P$ is a congruence of $Q$ (see \cite[Theorem 6.1]{CP}). 

The relation $\mathcal{P}$ is related to the cycle structure of left multiplications.
\begin{proposition}\label{proj sub of crossed}
Let $Q$ be a quandle. The following are equivalent:
\begin{itemize}
\item[(i)] $Fix(L_a)=\{a\}$ for every $a\in Q$.
\item[(ii)] $\P=0_Q$ i.e. $Q$ has no projection subquandle.
%$\mathcal{P}_2\notin \mathcal{S}(Q)$.
\item[(iii)] All the subquandles of $Q$ are faithful.
\end{itemize}
\end{proposition}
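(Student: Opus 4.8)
The plan is to establish the cycle (i)$\Rightarrow$(ii)$\Rightarrow$(iii)$\Rightarrow$(i). I will use three facts already available: the inclusions $[a]_{\lambda_Q}\subseteq[a]_\P\subseteq Fix(L_a)$ and the observation ``$Q$ has a projection subquandle iff $\P\neq 0_Q$'', both recorded just before the statement, together with the standard identity $L_{\phi(x)}=\phi L_x\phi^{-1}$, valid for every $\phi\in\aut{Q}$ and $x\in Q$ (in particular for $\phi=L_a$, since each $L_a$ is an automorphism of $Q$). For (i)$\Rightarrow$(ii): if $Fix(L_a)=\{a\}$ for all $a$ then $[a]_\P\subseteq Fix(L_a)=\{a\}$, and reflexivity of $\P$ forces $[a]_\P=\{a\}$; hence $\P=0_Q$ and $Q$ has no projection subquandle.

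For (ii)$\Rightarrow$(iii) I argue by contraposition: suppose a subquandle $R\leq Q$ is not faithful, so there are distinct $a,b\in R$ with $L_a|_R=L_b|_R$. Evaluating this equality at $b$ gives $a\ast b=L_a(b)=L_b(b)=b$, and evaluating it at $a$ gives $b\ast a=L_b(a)=L_a(a)=a$. Hence $\{a,b\}=\mathcal{P}_2$, i.e.\ $a\,\P\,b$ with $a\neq b$, so $\P\neq 0_Q$.

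The implication (iii)$\Rightarrow$(i) is the crux and the only place where genuine care is needed, since $Q$ is not assumed to be a crossed set: a priori an element $b\in Fix(L_a)\setminus\{a\}$ need not satisfy $b\ast a=a$. Arguing by contraposition, fix $b\in Fix(L_a)$ with $b\neq a$. From $L_a(b)=b$ and the conjugation identity with $\phi=L_a$ we get $L_b=L_{L_a(b)}=L_aL_bL_a^{-1}$, so $L_a$ and $L_b$ commute; then, taking $\phi=L_b$ and using this commutation, $L_{b\ast a}=L_{L_b(a)}=L_bL_aL_b^{-1}=L_a$. Now if $b\ast a=a$, then $a\ast b=b$ and $b\ast a=a$ show that $\{a,b\}\cong\mathcal{P}_2$ is a non-faithful subquandle of $Q$; while if $b\ast a\neq a$, then $a\neq b\ast a$ but $\Lambda_Q(a)=L_a=L_{b\ast a}=\Lambda_Q(b\ast a)$, so $Q$ itself is a non-faithful subquandle of $Q$. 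In either case (iii) fails. The single nontrivial step is the one-line computation $L_{b\ast a}=L_a$; everything else is routine bookkeeping, and I expect no further difficulty.
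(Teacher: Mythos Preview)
Your proof is correct and follows essentially the same route as the paper. The only cosmetic difference lies in (iii)$\Rightarrow$(i): the paper first invokes the fact (stated earlier without proof) that faithful quandles are crossed sets, which immediately forces $b\ast a=a$ and collapses your case split; your inline computation $L_{b\ast a}=L_a$ is precisely the proof of that fact, so your argument is the same one unrolled, with the bonus of being self-contained.
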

\begin{proof} (i) $\Rightarrow$ (ii) If $\{a,b\}$ is projection, then $b\in Fix(L_a)=\{a\}$, i.e. $a=b$ 

(ii) $\Rightarrow$ (iii) Let $M$ be a subquandle of $Q$. The blocks of $\lambda_M$ are projection subquandles of $Q$, so they are trivial and $M$ is faithful.

(iii) $\Rightarrow$ (i) In particular $Q$ is faithful and so it is a crossed set. If $b\in Fix(L_a)$, i.e. $a\ast b=b$, then also $b\ast a=a$, so $M=\{a,b\}$ is a faithful projection subquandle of $Q$. Therefore $a=b$.
%(this implication holds for idempotent left-quasigroups in general).
\end{proof}

\subsection{Semiregular quandles}\label{semiregular quandles}
One of the most studied classes of quandles is the class of connected affine quandles. Some of their particular properties depend just on the fact that the displacement group is semiregular. Thus, we study the family of quandles with such property. A quandle is called {\it semiregular} if $\dis(Q)$ is semiregular on $Q$, namely $\dis(Q)_a=1$ for every $a\in Q$. This family is relevant since every quandle decomposes as a disjoint union of semiregular quandles (see Proposition \ref{semiregular decomposition}).

Semiregularity of a quandle $Q$ is captured by the equivalence relation already defined in \cite{CP} as follows:
\begin{equation}\label{def_sigma}
a \, \sigma_Q\, b \quad \text{ if and only if } \quad  \dis(Q)_a=\dis(Q)_b.
\end{equation}
Indeed $Q$ is semiregular if and only if $\sigma_Q=1_Q$. It is easy to see that the class of semiregular quandles is closed under $\textbf{P}$ and $\textbf{S}$.

\begin{proposition}\label{semiregular decomposition}
Let $Q$ be a  quandle. The classes of $\sigma_Q$ are semiregular subquandles of $Q$ and they are blocks with respect to the action of $\aut{Q}$. In particular every quandle is a disjoint union of semiregular quandles.
\end{proposition}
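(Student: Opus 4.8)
The plan is to take the partition of $Q$ into $\sigma_Q$-classes and verify the three required properties in turn, with the whole argument resting on a single conjugation identity for point-stabilizers. The key lemma I would establish first is that for every $\varphi\in\aut{Q}$ and every $a\in Q$,
$\dis(Q)_{\varphi(a)}=\varphi\,\dis(Q)_a\,\varphi^{-1}$.
This holds because $\dis(Q)$ is normal in $\aut{Q}$ (conjugating a generator $L_xL_y^{-1}$ by $\varphi$ gives $L_{\varphi(x)}L_{\varphi(y)}^{-1}\in\dis(Q)$), so conjugation by $\varphi$ is an automorphism of $\dis(Q)$ carrying the fixator of $a$ onto the fixator of $\varphi(a)$. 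An immediate consequence is that $\aut{Q}$ (hence also $\lmlt(Q)\le\aut{Q}$) preserves the relation $\sigma_Q$: if $\dis(Q)_a=\dis(Q)_b$ then $\dis(Q)_{\varphi(a)}=\dis(Q)_{\varphi(b)}$. This is exactly the statement that the $\sigma_Q$-classes are blocks, i.e.\ that the partition into $\sigma_Q$-classes is $\aut{Q}$-invariant; in particular each $\varphi$ maps each class onto a class.

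Next I would show that each class $M=[a]_{\sigma_Q}$ is a subquandle. Since $L_b\in\aut{Q}$ fixes $b$ and $b\in M$, the block property forces $L_b(M)=[L_b(b)]_{\sigma_Q}=[b]_{\sigma_Q}=M$, and likewise $L_b^{-1}(M)=M$. Hence $b\ast c=L_b(c)\in M$ and $b\backslash c=L_b^{-1}(c)\in M$ for all $b,c\in M$, so $M$ is closed under both quandle operations.

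The substantive step is semiregularity of $M$. Here I would use two observations. First, $\dis(Q)_a$ fixes $M$ \emph{pointwise}: every $c\in M$ satisfies $\dis(Q)_c=\dis(Q)_a$, and $\dis(Q)_c$ fixes $c$, so every element of $\dis(Q)_a$ fixes every point of $M$. Second, each $L_bL_c^{-1}$ with $b,c\in M$ lies in $\dis(Q)$ and preserves $M$ setwise (by the previous paragraph), and its restriction to $M$ is $L_b^M(L_c^M)^{-1}$, where $L_x^M=L_x|_M$ is the left multiplication of the subquandle $M$; hence restriction to $M$ yields a surjective group homomorphism $\rho\colon N:=\langle L_bL_c^{-1} : b,c\in M\rangle\to\dis(M)$ with $N\le\dis(Q)$. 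Now suppose $g\in\dis(M)$ fixes some $c\in M$. Lift it to $\tilde g\in N\subseteq\dis(Q)$ with $\tilde g|_M=g$; then $\tilde g(c)=c$, so $\tilde g\in\dis(Q)_c=\dis(Q)_a$, and by the first observation $\tilde g$ fixes all of $M$, whence $g=\tilde g|_M=\mathrm{id}_M$. Thus $\dis(M)_c=1$ for every $c\in M$, so $M$ is semiregular. Since the $\sigma_Q$-classes partition $Q$ into semiregular subquandles, the final clause follows.

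I expect the main obstacle to be this last step: recognizing that every element of $\dis(M)$ is the restriction of an element of $\dis(Q)$, and that such an element fixing a single point of $M$ must fix all of $M$ because all points of a $\sigma_Q$-class share the same stabilizer. The conjugation identity for stabilizers is the technical engine throughout; once it is available the block and subquandle claims are routine, and semiregularity reduces to the lifting argument above.
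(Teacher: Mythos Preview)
Your proof is correct and follows essentially the same approach as the paper: both rest on the conjugation identity $\dis(Q)_{\varphi(a)}=\varphi\,\dis(Q)_a\,\varphi^{-1}$, derive the block property from it, and establish semiregularity by lifting an element of $\dis(M)$ fixing a point to an element of $\dis(Q)$ and using that all points of a $\sigma_Q$-class share the same stabilizer. Your derivation of the subquandle property from the block property (via $L_b(b)=b$ forcing $L_b(M)=M$) is a mild streamlining of the paper's direct computation, but the substance is identical.
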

\begin{proof}
Let $a\, \sigma_Q \, b$, i.e. $ \dis(Q)_a=\dis(Q)_b$ and $h\in \aut{Q}$. Then $\dis(Q)_{h(a)}=h \dis(Q)_a h^{-1}=h \dis(Q)_b h^{-1} = \dis(Q)_{h(b)}$. Hence $h(a)\, \sigma_Q\, h(b)$.\\
Moreover $\dis(Q)_{L_a^{\pm 1}(b)}=L_a^{\pm 1} \dis(Q)_b L_a=L_a^{\pm 1} \dis(Q)_a L_a=\dis(Q)_a$, so $[a]_{\sigma_Q}$ is a subquandle of $Q$. The action of the displacement group of $[a]_{\sigma_Q}$ is the action of the group $D=\langle L_b L_a^{-1}, \,b\in [a]_{\sigma_Q}\rangle $ restricted to $[a]_{\sigma_Q}$. So if $h=g|_{[a]_{\sigma_Q}}\in \dis([a]_{\sigma_Q})_a$ for 
some $g\in D$, then $g(b)=h(b)=b$ for every $b\in [a]_{\sigma_Q}$. Therefore $h=1$ and $[a]_{\sigma_Q}$ is semiregular. 
\end{proof}
Let $Q$ be a quandle, $a\in Q$ and let $N_a=N_{\dis(Q)}(\dis(Q)_a)$. Then $a^{\dis(Q)}\bigcap [a]_{\sigma_Q}=a^{N_a}$. Indeed $h(a)\, \sigma_Q\, a$ if and only if $\dis(Q)_{h(a)}=h \dis(Q)_a h^{-1} =\dis(Q)_a$, i.e. $h\in N_a$. Moreover $\zeta_Q=\sigma_Q\bigcap \c{Z(\dis(Q))}\leq \sigma_Q$, according to \cite[Proposition 5.9]{CP}.
The equivalence $\sigma_Q$ can be trivial and in this case $N_a=\dis(Q)_a$ and $Z(\dis(Q))=1$ ($Z(\dis(Q))\leq N_a$ for every $a\in Q$).

\begin{Proposition}\label{semiregular factor} Let $Q$ be a quandle and $\alpha\in Con(Q)$.
\begin{itemize}
\item[(i)] $Q/\alpha$ is semiregular if and only if $\dis^\alpha=\dis(Q)_{[a]_\alpha}$ for every $a\in Q$.
\item[(ii)]  Let $\beta= \bigwedge_{i\in I} \alpha_i\in Con(Q)$. If $Q/\alpha_i$ is semiregular for every $i\in I$, then $Q/\beta$ is semiregular.
\end{itemize}
\end{Proposition}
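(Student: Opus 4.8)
The plan is to read both parts off the dictionary recalled before Lemma \ref{block stab}: the surjective morphism $\pi_\alpha\colon\dis(Q)\to\dis(Q/\alpha)$ with kernel $\dis^\alpha=\bigcap_{a}\dis(Q)_{[a]_\alpha}$, together with the identity $\dis(Q)_{[a]_\alpha}=\pi_\alpha^{-1}(\dis(Q/\alpha)_{[a]_\alpha})$. For (i) I would unwind semiregularity of $Q/\alpha$ as the statement that $\dis(Q/\alpha)_{[a]_\alpha}=1$ for every $a$. Applying the surjection $\pi_\alpha$ to the preimage identity gives $\dis(Q/\alpha)_{[a]_\alpha}=\pi_\alpha(\dis(Q)_{[a]_\alpha})$, so this stabilizer is trivial precisely when $\dis(Q)_{[a]_\alpha}\subseteq\ker\pi_\alpha=\dis^\alpha$. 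Since the reverse inclusion $\dis^\alpha\subseteq\dis(Q)_{[a]_\alpha}$ always holds (the left side is an intersection of block stabilizers), triviality of $\dis(Q/\alpha)_{[a]_\alpha}$ is equivalent to $\dis(Q)_{[a]_\alpha}=\dis^\alpha$; quantifying over $a$ yields (i). This part is bookkeeping and I expect no difficulty.

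For (ii) I would put $\beta=\bigwedge_{i\in I}\alpha_i$ and use (i): it suffices to prove $\dis(Q)_{[a]_\beta}=\dis^\beta$ for every $a$, and only the inclusion $\dis(Q)_{[a]_\beta}\subseteq\dis^\beta$ needs work. The crux — and the step I expect to be the main point to get right — is the observation that if $g\in\dis(Q)$ stabilizes the block $[a]_\beta$ setwise, then it also stabilizes each larger block $[a]_{\alpha_i}$ setwise: indeed $g$ permutes the $\alpha_i$-blocks, and since $\emptyset\neq[a]_\beta=g([a]_\beta)\subseteq[a]_{\alpha_i}$, the block $g([a]_{\alpha_i})$ meets $[a]_{\alpha_i}$ and hence equals it. Consequently $\pi_{\alpha_i}(g)$ lies in $\dis(Q/\alpha_i)_{[a]_{\alpha_i}}$, which is trivial because $Q/\alpha_i$ is semiregular; thus $g\in\ker\pi_{\alpha_i}=\dis^{\alpha_i}$ for every $i\in I$.

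It then remains to identify $\bigcap_{i\in I}\dis^{\alpha_i}$ with $\dis^\beta$: an element $h$ lies in the intersection if and only if $h(c)\,\alpha_i\,c$ for all $i$ and all $c\in Q$, that is, $(h(c),c)\in\bigcap_{i}\alpha_i=\beta$ for all $c$, i.e.\ $h\in\dis^\beta$. Combining this with the previous paragraph, $\dis(Q)_{[a]_\beta}\subseteq\bigcap_{i}\dis^{\alpha_i}=\dis^\beta$, hence equality, and (i) applied to the congruence $\beta$ shows that $Q/\beta$ is semiregular. Apart from the block-stabilization observation in (ii), the whole argument is a routine manipulation of the correspondence between congruences, normal subgroups of $\dis(Q)$, and block stabilizers.
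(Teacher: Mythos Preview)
Your proof is correct and follows essentially the same route as the paper: part~(i) is the same preimage argument via $\dis(Q)_{[a]_\alpha}=\pi_\alpha^{-1}(\dis(Q/\alpha)_{[a]_\alpha})$, and part~(ii) combines the inclusion $\dis(Q)_{[a]_\beta}\subseteq\bigcap_i\dis(Q)_{[a]_{\alpha_i}}$ with the identity $\dis^\beta=\bigcap_i\dis^{\alpha_i}$ and then applies~(i). The only cosmetic differences are that the paper states the block-stabilizer relation as the full equality $\dis(Q)_{[a]_\beta}=\bigcap_i\dis(Q)_{[a]_{\alpha_i}}$ and cites \cite[Proposition~3.2(ii)]{CP} for $\dis^\beta=\bigcap_i\dis^{\alpha_i}$, whereas you prove the needed inclusion and the kernel identity directly.
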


\begin{proof}
(i) Since $\dis(Q)_{[a]_\alpha}=\pi_\alpha^{-1}(\dis(Q/\alpha)_{[a]})$, then $\dis(Q/\alpha)$ is semiregular if and only if $\dis^\alpha=\dis(Q)_{[a]_\alpha}$ for every $a\in Q$. 

(ii) Let $\beta=\bigwedge_{i\in I}\alpha_i$. Note that $\dis(Q)_{[a]_\beta}=\bigcap_{i\in I}\dis(Q)_{[a]_{\alpha_i}}$ and according to \cite[Proposition 3.2(ii)]{CP}, we have $\dis^{\beta}=\bigcap_{i\in I} \dis^{\alpha_i}$. Using item (i) we get
%, $\dis^{\alpha_i}=\dis(Q)_{[a]_{\alpha_i}}$, for every $i\in I$. So,
 $$\dis(Q)_{[a]_\beta}= \bigcap_{i\in I}\dis(Q/\alpha_i)_{[a]_{\alpha_i}}=\bigcap_{i\in I}\dis^{\alpha_i} =\dis^\beta.$$
Therefore we can conclude that $Q/\beta$ is semiregular by (i).
\end{proof}
Note that if $Q/\alpha$ is semiregular then $\dis(Q)_a\leq \dis(Q)_{[a]}=\dis^\alpha$ for every $a\in Q$.

\begin{lemma}\label{semidirect}
Let $Q$ be a semiregular quandle. Then $\lmlt(Q)\cong \dis(Q)\rtimes \langle L_a\rangle$ for every $a\in Q$.
\end{lemma}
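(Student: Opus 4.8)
The plan is to realize $\lmlt(Q)$ as an internal semidirect product by exhibiting $\dis(Q)$ as a normal subgroup, $\langle L_a\rangle$ as a complementary subgroup, and showing their intersection is trivial. We already know $\dis(Q)\triangleleft\lmlt(Q)$ and $\lmlt(Q)=\dis(Q)\langle L_a\rangle$ from the discussion preceding Lemma~\ref{block stab}, so by the standard characterization of internal semidirect products it suffices to prove $\dis(Q)\cap\langle L_a\rangle=1$.

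Here is where semiregularity enters. Suppose $L_a^n\in\dis(Q)$ for some integer $n$. Since $L_a$ fixes $a$, the element $L_a^n$ lies in $\dis(Q)_a$, the stabilizer of $a$ in the displacement group. Semiregularity says precisely that $\dis(Q)_a=1$, so $L_a^n=1$. This already shows $\dis(Q)\cap\langle L_a\rangle=1$ when $\langle L_a\rangle$ is infinite cyclic. When $L_a$ has finite order $m$, the same argument shows that any element of $\langle L_a\rangle\cap\dis(Q)$, being a power $L_a^n$ fixing $a$, must be the identity; so the intersection is trivial regardless of the order of $L_a$.

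Combining these facts: $\dis(Q)$ is normal in $\lmlt(Q)$, $\langle L_a\rangle$ is a subgroup, their product is all of $\lmlt(Q)$, and their intersection is trivial. Therefore $\lmlt(Q)=\dis(Q)\rtimes\langle L_a\rangle$, with $\langle L_a\rangle$ acting on $\dis(Q)$ by conjugation. Since $a$ was arbitrary and all the $L_a$ generate $\lmlt(Q)$ modulo $\dis(Q)$, the statement holds for every $a\in Q$.

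I do not anticipate a serious obstacle here; the only point requiring a moment's care is the observation that an arbitrary element of $\dis(Q)\cap\langle L_a\rangle$ is automatically in the point stabilizer $\dis(Q)_a$ (because every power of $L_a$ fixes $a$), which is what lets semiregularity kill it. One should also note explicitly that $\dis(Q)$ and $\langle L_a\rangle$ together generate $\lmlt(Q)$ and that the former is normal, both of which are recorded in the preliminaries, so the isomorphism type of the semidirect product is determined.
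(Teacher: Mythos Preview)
Your proof is correct and follows essentially the same approach as the paper: use $\dis(Q)\triangleleft\lmlt(Q)$, $\lmlt(Q)=\dis(Q)\langle L_a\rangle$, and observe that $\dis(Q)\cap\langle L_a\rangle\leq\dis(Q)_a=1$ by semiregularity. The case split on the order of $L_a$ is unnecessary, since the argument $L_a^n\in\dis(Q)\Rightarrow L_a^n\in\dis(Q)_a=1$ works uniformly.
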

\begin{proof}
The displacement group of $Q$ is semiregular, therefore $\dis(Q)\bigcap \langle L_a\rangle\leq\dis(Q)_a=1$ and so $\lmlt(Q)=\dis(Q)\rtimes \langle L_a\rangle$ since $\lmlt(Q)=\dis(Q)\langle L_a\rangle$ and $\dis(Q)$ is normal.
\end{proof}

The following lemma shows that for semiregular quandles the equivalence $\P$ is a congruence. Note that the implication (iii) $\Rightarrow$ (ii) $\Rightarrow$ (i) holds in general
\begin{proposition}\label{On maximal proj sub}
Let $Q$ be a semiregular quandle. Then $Q$ is a crossed set and $\P=\lambda_Q$. Therefore, the following are equivalent:
\begin{itemize}
\item[(i)] $Q$ is faithful.
\item[(ii)] $Q$ has no projection subquandles.
%\item[(iii)] $Fix(L_a)=\{a\}$ for every $a\in Q$.
\item[(iii)] Right multiplications are injective.
\end{itemize}
\end{proposition}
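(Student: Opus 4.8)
The plan is to deduce the whole statement from one feature of semiregular quandles: an element of $\dis(Q)$ that fixes a point must be the identity. First I would record the crux: if $Q$ is semiregular and $b\in Fix(L_a)$, then $L_a=L_b$. Indeed $L_b(b)=b\ast b=b$ by idempotency and $L_a(b)=a\ast b=b$ by hypothesis, so $L_aL_b^{-1}$ fixes $b$; since $L_aL_b^{-1}\in\dis(Q)$ and $\dis(Q)_b=1$, it follows that $L_aL_b^{-1}=1$. Thus $Fix(L_a)\subseteq[a]_{\lambda_Q}$, and since $[a]_{\lambda_Q}\subseteq[a]_\P\subseteq Fix(L_a)$ was observed before the statement, all three sets coincide for every $a\in Q$. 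Hence $\P=\lambda_Q$ as relations (in particular $\P$ is a congruence), and the equality $[a]_\P=Fix(L_a)$ for all $a$ is exactly the criterion recorded above for $Q$ to be a crossed set.

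For the equivalences I would show that each of (i)--(iii) is equivalent to $\lambda_Q=0_Q$. This is the definition in case (i). For (ii), ``$Q$ has no projection subquandle'' means $\P=0_Q$, which equals $\lambda_Q=0_Q$ by the previous paragraph. For (iii), fix $a\in Q$ and note $R_a(x)=x\ast a=L_x(a)$; I would prove that $R_a(b)=R_a(c)$ if and only if $L_b=L_c$. The ``if'' direction is immediate. Conversely, $R_a(b)=R_a(c)$ says that $L_c^{-1}L_b$ fixes $a$; since $\overline{L_x}=\overline{L_y}$ in $\lmlt(Q)/\dis(Q)$ for all $x,y$ (because $L_xL_y^{-1}\in\dis(Q)$), we have $L_c^{-1}L_b\in\dis(Q)$, hence $L_c^{-1}L_b\in\dis(Q)_a=1$ and $L_b=L_c$. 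So $R_a$ is injective precisely when $\lambda_Q=0_Q$, and since this criterion does not depend on $a$, statement (iii) is equivalent to $\lambda_Q=0_Q$ as well.

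The argument is short, and the only point needing a line of justification is the membership $L_c^{-1}L_b\in\dis(Q)$ (equivalently, that $\dis(Q)$ contains every product of left translations of zero exponent sum), which is routine from the normality of $\dis(Q)$ in $\lmlt(Q)$ used throughout the paper, e.g. in Lemma~\ref{semidirect}: conjugating $L_c^{-1}L_b$ by $L_a$ yields $(L_aL_c^{-1})(L_bL_a^{-1})\in\dis(Q)$. I do not anticipate any real obstacle; the semiregularity trick --- ``a displacement fixing a point is trivial'' --- is what does all the work, upgrading assertions about which points a left translation fixes into equalities of left translations, and the three conditions then fall out simultaneously (so there is in particular no need to invoke separately the general implications (iii) $\Rightarrow$ (ii) $\Rightarrow$ (i)).
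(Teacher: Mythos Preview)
Your proof is correct and follows essentially the same approach as the paper: both hinge on the observation that in a semiregular quandle, $b\in Fix(L_a)$ forces $L_aL_b^{-1}\in\dis(Q)_b=1$, hence $L_a=L_b$, which immediately gives $Fix(L_a)=[a]_{\lambda_Q}=[a]_\P$ and then the equivalences. Your organization (showing each of (i)--(iii) is equivalent to $\lambda_Q=0_Q$) is slightly cleaner than the paper's cycle of implications, and your explicit justification that $L_c^{-1}L_b\in\dis(Q)$ is a nice touch the paper leaves implicit, but the substance is the same.
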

\begin{proof}
 Let $b\in Fix(L_a)$. Then $L_b^{-1} L_a(b)=b$  i.e. $L_b^{-1} L_a\in \dis(Q)_b=1$. So $L_a=L_b$ i.e. $a \lambda_Q \,b$ and $a\in Fix(L_b)$. Therefore $Q$ is a crossed set and $\P=\lambda_Q$.
 
(i) $\Leftrightarrow$ (ii) Clear since $\P=\lambda_Q$.

%(ii) $\Rightarrow$ (iii) Follows by Proposition \ref{proj sub of crossed}. 

(i) $\Rightarrow$ (iii) Let $a\ast b=c\ast b$. Then $L_c^{-1}L_a(b)=b$, so $L_c=L_a$ and so $a=c$. 

(iii) $\Rightarrow$ (i) If $L_a=L_b$, then $a\ast b=b\ast b$ and so $a=b$. 
\end{proof}

For a semiregular quandle $Q$ the blocks of $\lambda_Q$ are the maximal projection subquandles of $Q$. 
%In particular they are blocks with respect to the action of $\aut{Q}$. 
%\begin{proposition}\comment{needed?}
%Let $Q$ be a connected principal quandle. Then the blocks of $\lambda_Q$ are blocks with respect to the action of $\aut(Q)$.
%\end{proposition}
%
%\begin{proof}
%Let $h\in \aut(Q)_1=C_{\aut(G)}(f)$. Then $f(h(a^{-1})h(b))=h(a^{-1})h(b)$.
%\end{proof}

\begin{corollary}\label{principal}
Finite semiregular faithful quandles are latin.
\end{corollary}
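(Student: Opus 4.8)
The plan is to read this off Proposition \ref{On maximal proj sub} essentially for free, using finiteness only at the very last step. Let $Q$ be a finite semiregular faithful quandle. Since $Q$ is semiregular, Proposition \ref{On maximal proj sub} applies and gives the equivalence of (i) faithfulness with (iii) injectivity of every right multiplication $R_a\colon b\mapsto b\ast a$. So the hypothesis that $Q$ is faithful immediately yields that each $R_a$ is injective.

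Next I would invoke finiteness: for every $a\in Q$ the map $R_a$ is a self-map of the finite set $Q$, so injectivity forces surjectivity and hence $R_a$ is a bijection. Since this holds for all $a\in Q$, the quandle $Q$ is latin by definition, which is what we want. (The remark in the preliminaries that for finite $Q$ the latin condition is insensitive to whether one includes right division as a basic operation means there is no ambiguity in the conclusion.)

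There is no real obstacle here; the statement is a one-line corollary of Proposition \ref{On maximal proj sub} combined with the pigeonhole principle. The only point worth checking is that Proposition \ref{On maximal proj sub} is formulated for an arbitrary semiregular quandle, with no connectedness or finiteness assumption in its hypotheses — which it is — so that the implication (i) $\Rightarrow$ (iii) is legitimately available here, and that $R_a$ genuinely maps $Q$ into $Q$, which is immediate since $b\ast a\in Q$.
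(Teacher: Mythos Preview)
Your proof is correct and matches the paper's own argument: both use Proposition \ref{On maximal proj sub} to deduce that right multiplications are injective, then invoke finiteness to conclude they are bijective.
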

\begin{proof}
If $Q$ is finite and faithful, then right multiplications are injective and so bijective, i.e. $Q$ is latin.
\end{proof}

\section{Principal quandles}\label{Sec: principal}

\subsection{Principal decomposition}
Principal quandles form a particular class of homogeneous quandles. Recall that a principal quandle $Q$ is defined as $Q=\Q(G,f)$ and the quandle operation is 
\begin{equation}\label{op for principal}
a\ast b=af(a^{-1}b),
\end{equation}
for every $a,b\in G$ (if $Q$ is affine then $a\ast b=(1-f)(a)+f(b)$). The binary algebras defined as in \eqref{op for principal} are also called {\it right linear} over the group $G$ (for further details see \cite{Stanos}). The canonical left action of $G$ on itself is a regular action by automorphisms of $Q$. 
The action of the generators of $\dis(Q)$ is given by
$$L_b L_1^{-1}(a)=bf(b)^{-1} a,$$
for every $a,b\in Q$. Then $\dis(Q)\simeq [G,f]=\langle \setof{af(a)^{-1}}{a\in G}\rangle\leq G$ and its action is given by the canonical left action on $G$ which is semiregular. Therefore principal quandles are semiregular. The connected component of $a$ is $[G,f]a$ for every $a\in G$. Principal quandles can be characterized as follows:
\begin{proposition}\label{principal and regular}\cite[Corollary B.3]{GenAlex}
Let $Q$ be a quandle and $a\in Q$. The following are equivalent:
\begin{enumerate}
\item[(i)] $G\leq \aut{Q}$ is a regular and $\widehat{L_a}$-invariant subgroup.
\item[(ii)] $Q\cong \Q(G,\widehat{L}_a)$.
\end{enumerate}
If $Q$ is connected, the following are equivalent:
\begin{enumerate}
\item[(i)] $\dis(Q)$ is regular.
\item[(ii)] $Q$ is principal.
\end{enumerate}
In particular if $Q\cong \Q(G,f)$, then $G=[G,f]\cong \dis(Q)$.
\end{proposition}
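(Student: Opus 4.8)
The plan is to make everything explicit via the orbit map at $a$, deduce the two equivalences from it, and then read off the final assertion from the description of $\dis(\Q(G,f))$ already recorded above. \emph{For the first equivalence}, assume (i): since $G$ is $\widehat{L_a}$-invariant, $f:=\widehat{L_a}|_G$ is an automorphism of $G$, so $\Q(G,f)=\Q(G,\widehat{L_a})$ is defined, and the isomorphism in (ii) will be the orbit map $\phi\colon\Q(G,f)\to Q$, $g\mapsto g(a)$. That $\phi$ is a quandle homomorphism rests on the two identities $\eta L_x\eta^{-1}=L_{\eta(x)}$ (valid for any $\eta\in\aut{Q}$) and $L_a(a)=a$ (since $L_a\in\aut{Q}_a$): for $g,h\in G$,
\[
\phi(g)\ast\phi(h)=L_{g(a)}\bigl(h(a)\bigr)=gL_ag^{-1}h(a)=g\bigl(f(g^{-1}h)(a)\bigr)=\phi\bigl(g\cdot f(g^{-1}h)\bigr),
\]
and the last term is $\phi$ applied to the product $g\ast h$ computed in $\Q(G,f)$. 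Being the orbit map, $\phi$ is surjective iff $G$ is transitive and injective iff $G_a=1$, so (stabilizers of a transitive action being conjugate) $\phi$ is bijective iff $G$ acts regularly; a bijective quandle homomorphism is an isomorphism, hence (i) $\Rightarrow$ (ii).

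Conversely, (ii) is to be read as ``$\phi$ is an isomorphism'' — the symbol $\Q(G,\widehat{L_a})$ already presupposes $G\le\aut{Q}$ together with its $\widehat{L_a}$-invariance — so bijectivity of $\phi$ forces $G$ to act regularly, which is (i). \emph{For the second equivalence} with $Q$ connected: $\dis(Q)$ is normal in $\lmlt(Q)$, hence $\widehat{L_a}$-invariant, so if it is regular the first equivalence applied to $G=\dis(Q)$ gives $Q\cong\Q(\dis(Q),\widehat{L_a}|_{\dis(Q)})$, which is principal by definition; conversely, if $Q=\Q(G,f)$ then, as recorded above, $\dis(Q)\cong[G,f]$ acts on $Q$ by canonical left translation, which is semiregular, and connectedness makes this action transitive, hence regular.

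\emph{Finally}, for $Q=\Q(G,f)$ we already have $\dis(Q)\cong[G,f]\le G$ acting on $G$ by left translation, with orbit of the identity equal to $[G,f]$; hence $Q$ is connected iff $[G,f]=G$, in which case $G=[G,f]\cong\dis(Q)$, which is the ``in particular''. No real obstacle is anticipated: everything reduces to the two conjugation identities above and to the orbit/stabilizer bookkeeping; the one point deserving a word of care is the convention that the isomorphism in (ii) is the canonical orbit map, not merely an abstract one.
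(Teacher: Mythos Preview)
Your proof is correct and follows the same line as the paper's: the paper invokes the homogeneous representation of \cite[Theorem 7.1]{J} for (i)$\Rightarrow$(ii) where you spell out the orbit map $g\mapsto g(a)$ and its homomorphism property explicitly, and both handle (ii)$\Rightarrow$(i), the connected case, and the ``in particular'' in essentially identical fashion (semiregularity of left translation plus transitivity from connectedness, then $1^{\dis(Q)}=[G,f]=G$). Your closing caveat about reading (ii) via the canonical orbit map is apt; the paper leaves this implicit.
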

\begin{proof}
(i) $\Rightarrow$ (ii) The group $G$ is regular and provides a homogeneous representation given by $Q\cong  \Q(G,\widehat{L_a})$, since $G_a= 1 $.

(ii) $\Rightarrow$ (i) The left action of $G$ is a regular action by automorphisms of $Q$.

Assume that $Q$ is connected. If $\dis(Q)$ is regular, then $Q\cong \Q(\dis(Q),\widehat{L_a})$, i.e. $Q$ is principal. On the other hand, if $Q$ is connected and principal then $\dis(Q)$ is semiregular and transitive, then regular. \\
 Let $Q=\Q(G,f)$ be a principal representation of $G$. Then $1^{\dis(Q)}= [G,f]=G$ and therefore $G\cong \dis(Q)$.
\end{proof}

\begin{corollary}\label{Latin principal}
Let $Q$ be a latin quandle and $a\in Q$. The following are equivalent: 
\begin{enumerate}
\item[(i)] $Q$ is principal.
\item[(ii)] $\dis(Q)=\{L_b L_{a}^{-1}, \ b\in Q\}$.
\end{enumerate}\end{corollary}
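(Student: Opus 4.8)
The plan is to establish the equivalence by a chain of implications using the principal decomposition machinery already in place, noting first that a latin quandle is faithful, hence a crossed set, and — if it is also connected (which principality forces) — semiregular by Proposition \ref{principal and regular}. First I would observe that (i) $\Rightarrow$ (ii): if $Q$ is principal then by Proposition \ref{principal and regular} the group $\dis(Q)$ is regular, so for the fixed $a\in Q$ the map $b\mapsto L_bL_a^{-1}$ already has image of size $|Q| = |\dis(Q)|$ inside $\dis(Q)$; since $\dis(Q)=\langle L_bL_c^{-1} : b,c\in Q\rangle$ and any $L_bL_c^{-1} = (L_bL_a^{-1})(L_cL_a^{-1})^{-1}$, and the set $\{L_bL_a^{-1} : b\in Q\}$ has full cardinality, it must already be all of $\dis(Q)$. (For infinite $Q$ one argues instead that in the principal representation $\Q(\dis(Q),f)$ with $a$ playing the role of the identity, the generator $L_bL_a^{-1}$ acts as left translation by $bf(b)^{-1}$, and $\{bf(b)^{-1} : b\in \dis(Q)\} = [\,\dis(Q),f\,] = \dis(Q)$ by the last line of Proposition \ref{principal and regular}.)

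For (ii) $\Rightarrow$ (i) I would use Proposition \ref{principal and regular} again: it suffices to show $\dis(Q)$ is regular, and since $Q$ is latin it is faithful, hence semiregular by the discussion preceding Lemma \ref{semidirect} together with Proposition \ref{semiregular decomposition} applied to a connected quandle — more directly, a connected faithful quandle has $\dis(Q)_a = Fix(\widehat{L}_a)$, and one checks this is trivial here. Actually the cleanest route: from (ii), $Q$ is connected because $\dis(Q) = \{L_bL_a^{-1} : b\in Q\}$ acts transitively on $Q$ (the orbit of $a$ is $\{L_bL_a^{-1}(a) : b\in Q\} = \{L_b(a) : b\in Q\}$, and since $Q$ is latin $R_a$ is a bijection, so $\{b\ast a : b\in Q\} = Q$). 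Then $\dis(Q)$ is transitive, and $|\dis(Q)| \le |\{L_bL_a^{-1} : b\in Q\}| \le |Q|$, so transitivity forces regularity; now apply the connected case of Proposition \ref{principal and regular} to conclude $Q$ is principal.

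The main obstacle I anticipate is handling the infinite case carefully in (i) $\Rightarrow$ (ii): the cardinality argument collapses, so one genuinely needs the explicit identification $\dis(Q) \cong [\,G,f\,] = \langle af(a)^{-1} : a\in G\rangle$ from Proposition \ref{principal and regular}, observe that under the isomorphism $G \cong \dis(Q)$ the element $af(a)^{-1}$ corresponds precisely to $L_bL_a^{-1}$ for the appropriate $b$ (with $a\in Q$ identified with $1\in G$), and then note that $G = [\,G,f\,]$ for a principal representation means every single element of $\dis(Q)$ is of the form $L_bL_a^{-1}$, not merely a product of such. One should also double-check that the latin hypothesis is what makes the orbit computation $\{b\ast a : b\in Q\} = Q$ work in (ii) $\Rightarrow$ (i), since without it the displayed set in (ii) need not act transitively and the corollary would fail. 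Everything else is a routine application of the results of this section.
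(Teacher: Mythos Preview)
Your argument is fine for finite $Q$, but both directions have genuine gaps in the infinite case.

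For (i) $\Rightarrow$ (ii) you write that ``$G = [G,f]$ for a principal representation means every single element of $\dis(Q)$ is of the form $L_bL_a^{-1}$, not merely a product of such.'' This misreads the notation: in the paper $[G,f] = \langle gf(g)^{-1} : g \in G \rangle$ is the subgroup \emph{generated} by these elements, so $G = [G,f]$ only says this set generates $G$, not that it equals $G$. The correct reason every element of $G$ has the form $bf(b)^{-1}$ is that the map $b \mapsto bf(b)^{-1}$ is exactly $R_1$, which is bijective because $Q$ is latin. You correctly flag the latin hypothesis as crucial for (ii) $\Rightarrow$ (i), but it is equally indispensable here; connectedness alone does not give it.

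For (ii) $\Rightarrow$ (i), the step ``$|\dis(Q)| \le |Q|$ and transitivity forces regularity'' is valid only for finite $Q$: a countable group can act transitively but not freely on a countable set. The fix is immediate once you use (ii) directly: if $h \in \dis(Q)_a$ then $h = L_bL_a^{-1}$ for some $b$, whence $b\ast a = h(a) = a = a\ast a$, and injectivity of $R_a$ gives $b = a$, so $h = 1$.

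The paper's proof avoids both issues in one stroke: for a latin quandle the set $\{L_bL_a^{-1} : b \in Q\}$ is always a complete set of left coset representatives for $\dis(Q)_a$ in $\dis(Q)$ (this uses only that $R_a$ is a bijection and is cardinality-free). Hence (ii) holds if and only if $\dis(Q)_a = 1$, i.e.\ $\dis(Q)$ is regular, and the connected case of Proposition~\ref{principal and regular} finishes. Your approach is recoverable along the lines above, but the coset-representative observation is both shorter and uniform in the size of $Q$.
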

\begin{proof}
For a latin quandle $Q$ a set of representatives of cosets with respect to $\dis(Q)_a$ is $\setof{L_b L_{a}^{-1}}{b\in Q}$. So a latin quandle $Q$ is principal if and only if (ii) holds, by virtue of Proposition \ref{principal and regular}.
\end{proof}

\begin{corollary}\label{product of principal}
The class of principal quandles is closed under direct product.
\end{corollary}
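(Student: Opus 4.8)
The plan is to produce an explicit principal presentation of a direct product of principal quandles. Suppose $Q_i\cong\Q(G_i,f_i)$ with $G_i$ a group and $f_i\in\aut{G_i}$, for $i$ ranging over an arbitrary index set $I$. Let $G=\prod_{i\in I}G_i$ be the direct product group and let $f=\prod_{i\in I}f_i$ be the coordinatewise map $(g_i)_{i\in I}\mapsto(f_i(g_i))_{i\in I}$. I claim that $\prod_{i\in I}Q_i\cong\Q(G,f)$; taking $|I|=2$ this yields closure under binary direct products (and the same argument in fact gives closure under arbitrary direct products).

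First I would observe that $f$ is a well-defined automorphism of $G$: it is a homomorphism because each $f_i$ is, and it is bijective with inverse $\prod_{i\in I}f_i^{-1}$. Hence $\Q(G,f)$ is a genuine principal quandle in the sense of Example \ref{example2}. The main (and essentially only) step is then the mechanical verification that, identifying the underlying set of $\Q(G,f)$ with $\prod_{i\in I}Q_i$ in the obvious way, the two quandle operations coincide: for $a=(a_i)_{i\in I}$ and $b=(b_i)_{i\in I}$ in $G$, formula \eqref{op for principal} gives
$$a\ast b \;=\; a\,f(a^{-1}b) \;=\; (a_i)_{i\in I}\bigl(f_i(a_i^{-1}b_i)\bigr)_{i\in I} \;=\; \bigl(a_if_i(a_i^{-1}b_i)\bigr)_{i\in I} \;=\; (a_i\ast b_i)_{i\in I},$$
which is exactly the operation of the direct product quandle $\prod_{i\in I}Q_i$. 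Thus the identity map on underlying sets is a quandle isomorphism and $\prod_{i\in I}Q_i$ is principal.

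I do not expect a genuine obstacle here: the statement holds essentially on the nose, because the defining formula \eqref{op for principal} is applied coordinatewise in the group, and nothing in the argument is sensitive to $I$ being infinite. If one prefers to avoid writing the operation down, an equivalent route is via Proposition \ref{principal and regular}(i): for each $i$ the left translations of $G_i$ form a regular, $\widehat{L_{a_i}}$-invariant subgroup of $\aut{Q_i}$, so the coordinatewise action of $\prod_{i\in I}G_i$ on $\prod_{i\in I}Q_i$ is a regular subgroup of $\aut{\prod_{i\in I}Q_i}$ invariant under $\widehat{L_{(a_i)_{i\in I}}}$ (since $L_{(a_i)_{i\in I}}=\prod_{i\in I}L_{a_i}$), whence $\prod_{i\in I}Q_i\cong\Q\bigl(\prod_{i\in I}G_i,\widehat{L_{(a_i)_{i\in I}}}\bigr)$ is principal by that proposition.
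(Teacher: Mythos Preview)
Your proof is correct. Your primary argument---writing down $\Q\bigl(\prod_i G_i,\prod_i f_i\bigr)$ and checking by direct computation that its operation agrees coordinatewise with that of $\prod_i Q_i$---is a more elementary route than the paper's, which instead invokes Proposition~\ref{principal and regular}: the paper observes that $\prod_i G_i$ acts regularly on $\prod_i Q_i$ and is invariant under $\widehat{L}_{(a_i)}=\prod_i\widehat{L_{a_i}}$, and concludes. Your ``alternative route'' in the last paragraph is exactly the paper's argument. The direct computation has the virtue of being completely self-contained (it does not rely on Proposition~\ref{principal and regular}), while the paper's approach is shorter and illustrates how the characterization of principal quandles via regular invariant subgroups is meant to be used.
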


\begin{proof}
Let $\setof{Q_i=\Q(G_i,f_i)}{i\in I}$ be a set of principal quandles. The group $\prod_{i\in I} G_i$ is regular on $\prod_{i\in I} Q_i$ and it is invariant under $f=\widehat{L}_{\setof{a_i}{i\in I}}=\prod_{i\in I} \widehat{L_{a_i}}$. So we can apply Proposition \ref{principal and regular} and $\prod_{i\in I} Q_i\cong \Q(\prod_{i\in I} G_i, f)$.
\end{proof} 

Every connected quandle is a disjoint union of principal subquandles (the following Theorem is the counterpart of Proposition \ref{semiregular decomposition} for connected quandles).
\begin{theorem}\label{decomposition}
Let $Q$ be a connected quandle. Then $[a]_{\sigma_Q}=a^{N_a}$ and it is a principal quandle over $N_a/\dis(Q)_a$ for every $a\in Q$.  In particular $Q$ is the disjoint union of isomorphic copies of a principal quandle.
\end{theorem}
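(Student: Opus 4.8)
The plan is to analyze the subquandle $[a]_{\sigma_Q}$ directly, using the two facts already established in the excerpt: first, that for a connected quandle $a^{\dis(Q)}\cap [a]_{\sigma_Q}=a^{N_a}$ where $N_a=N_{\dis(Q)}(\dis(Q)_a)$, and second (from Proposition~\ref{semiregular decomposition}) that each $\sigma_Q$-class is a semiregular subquandle and a block for $\aut{Q}$. Since $Q$ is connected, $a^{\dis(Q)}=Q$, so the first fact immediately gives $[a]_{\sigma_Q}=a^{N_a}$. The heart of the argument is then to identify the displacement group of this subquandle and apply Proposition~\ref{principal and regular}.

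First I would show $[a]_{\sigma_Q}$ is a connected quandle. Write $M=[a]_{\sigma_Q}$. The displacement group $\dis(M)$ is the restriction to $M$ of $D=\langle L_bL_c^{-1} : b,c\in M\rangle$; by the proof of Proposition~\ref{semiregular decomposition}, $M$ is semiregular, i.e. $\dis(M)_a=1$. To see $\dis(M)$ is transitive on $M$, note every $b\in M=a^{N_a}$ has the form $b=h(a)$ for some $h\in N_a\leq \dis(Q)$; since $\dis(Q)$ is generated by the $L_xL_y^{-1}$ with $x,y$ in the single orbit $Q$, one can (by the standard connected-orbit argument, e.g. as in the representation over $\dis(Q)$) route between $a$ and $b$ inside the orbit using displacement generators, and the intermediate points lie in $M$ because $M$ is a block for $\aut{Q}\supseteq\dis(Q)$ and is fixed setwise by the relevant elements of $N_a$. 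Thus $\dis(M)$ acts transitively on $M$, so $M$ is connected, and being also semiregular its displacement group $\dis(M)$ is regular on $M$. By Proposition~\ref{principal and regular}, $M$ is a principal quandle, over the group $\dis(M)$.

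Next I would identify $\dis(M)$ with $N_a/\dis(Q)_a$. The group $N_a$ acts on $M=a^{N_a}$ transitively; its kernel on $M$ is $N_a\cap \bigcap_{b\in M}\dis(Q)_b$, and since all $b\in M$ satisfy $\dis(Q)_b=\dis(Q)_a$ by definition of $\sigma_Q$, this kernel is exactly $N_a\cap\dis(Q)_a=\dis(Q)_a$ (as $\dis(Q)_a\trianglelefteq N_a$). Hence $N_a/\dis(Q)_a$ acts faithfully and transitively, i.e. regularly, on $M$. It remains to check that this regular action agrees with $\dis(M)$; both are the unique (up to the identification of $M$ with a quotient) regular subgroup arising from the homogeneous representation, and concretely $\dis(M)$ is generated by the $L_bL_a^{-1}|_M$ with $L_bL_a^{-1}\in N_a$, so $\dis(M)\leq N_a|_M\cong N_a/\dis(Q)_a$; transitivity of $\dis(M)$ together with regularity of $N_a/\dis(Q)_a$ forces equality. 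Therefore $M\cong\Q(N_a/\dis(Q)_a,\widehat{L}_a)$ is principal over $N_a/\dis(Q)_a$.

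Finally, since the classes of $\sigma_Q$ are blocks for $\aut{Q}$ and $Q$ is connected (hence $\aut{Q}$ is transitive), $\aut{Q}$ permutes the $\sigma_Q$-classes transitively; any two classes are related by an automorphism, which restricts to a quandle isomorphism between them. Thus $Q$ decomposes as a disjoint union of isomorphic copies of the principal quandle $[a]_{\sigma_Q}$. I expect the main obstacle to be the transitivity of $\dis(M)$ on $M$ --- one must be careful that the displacement generators used to move $a$ to an arbitrary $b\in M$ can be chosen (or recombined) to stay inside $M$; this is where the facts that $M$ is a block for $\dis(Q)$ and that $N_a$ contains the relevant products $L_bL_a^{-1}$ do the real work, and it may be cleanest to deduce transitivity of $\dis(M)$ from the already-recorded equality $a^{N_a}=[a]_{\sigma_Q}$ combined with the observation that $N_a/\dis(Q)_a$ is covered by displacement elements of $M$.
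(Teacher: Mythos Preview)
Your argument has a genuine gap at the step where you claim $\dis(M)$ is transitive on $M=[a]_{\sigma_Q}$, and you are right to flag it as the main obstacle --- it is not filled by what you wrote. The ``routing'' sketch does not work: an element $h\in N_a$ carrying $a$ to $b$ is a product of generators $L_xL_y^{-1}$ with $x,y$ ranging over all of $Q$, not only over $M$, so there is no reason the individual factors restrict to elements of $\dis(M)$, and being a block does not force the intermediate points of such a word to lie in $M$. Your fallback, that $N_a/\dis(Q)_a$ is ``covered by displacement elements of $M$'', is precisely the statement $[G,f]=G$ for $G=N_a/\dis(Q)_a$ and $f=\widehat{L_a}$, i.e.\ connectedness of $\Q(G,f)$; this is exactly what you are trying to prove, and the theorem does not even assert it (principal quandles need not be connected).

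The paper avoids this entirely by invoking the \emph{first} characterization in Proposition~\ref{principal and regular}: to conclude that $M$ is principal over $G$ one only needs a regular $\widehat{L_a}$-invariant subgroup $G\leq\aut{M}$, with no connectedness hypothesis. You already carry out the key computation in your second paragraph: $N_a|_M\cong N_a/\dis(Q)_a$ acts on $M$ with kernel $\dis(Q)_a$ and is transitive since $M=a^{N_a}$, hence regular. The only missing ingredient is that $N_a|_M$ is $\widehat{L_a|_M}$-invariant, which is immediate: $L_a$ fixes $a$, so conjugation by $L_a$ preserves $\dis(Q)_a$ and therefore its normalizer $N_a$. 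With this, Proposition~\ref{principal and regular}(i) yields $M\cong\Q(N_a/\dis(Q)_a,\widehat{L_a})$ directly. So the fix is to drop the connectedness detour, promote your regularity computation for $N_a|_M$ to the main step, and add the one-line invariance check; the identification of $\dis(M)$ with $N_a/\dis(Q)_a$ is then neither needed nor, in general, available.
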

\begin{proof}
Since $Q$ is connected $[a]_{\sigma_Q}\bigcap a^{\dis(Q)}=[a]_{\sigma_Q}=a^{N_a}$. Moreover $N_a$ is transitive over $[a]_{\sigma_Q}$, it contains $\dis(Q)_a$ as a normal subgroup and $h|_{[a]_{\sigma_Q}}(a)=a$ if and only if $h|_{[a]_{\sigma_Q}}=1$. So $G={N_a}|_{[a]_{\sigma_Q}}\cong N_a/\dis(Q)_a$ is regular over $[a]_{\sigma_Q}$ and since it is stable under the inner automorphism of $f={L_a}|_{[a]_{\sigma_Q}}$ then it provides a principal representation as $[a]_{\sigma_Q}\cong \Q(G,\widehat{f}|_G)$.\\
The quandle $Q$ is the union of the classes with respect to $\sigma_Q$ which are principal subquandles. Since they are blocks with respect to $\dis(Q)$ and the action is transitive, they are all isomorphic.
\end{proof}

\begin{remark}
Principal quandles are semiregular. On the other hand the connected components of semiregular quandles are principal (a principal representation is given by the displacement group). So every quandles is actually a disjoint union of principal quandles. In general they are not guaranteed to be isomorphic.% or to be blocks with respect of the action of the automorphism group. \comment{check!}
\end{remark}

%\begin{example}
%Let $Q$ be a finite quandle with no projection subquandles. Then $Q$ is the disjoint union of isomorphic copies of a finite principal latin quandle.
%\end{example}
%
%\begin{proposition}\label{caratt principal}\comment{together with \ref{principal and regular}?}
%Let $Q$ be a connected quandle. The following are equivalent:
%\begin{enumerate}
%\item[(i)] $\dis(Q)$ is regular.
%\item[(ii)] $Q$ is principal.
%\end{enumerate}
%In particular if $Q\cong \Q(G,f)$, then $G\cong \dis(Q)$.
%\end{proposition}
%\begin{proof}
%If $\dis(Q)$ is regular, then $Q\cong \Q(\dis(Q),\widehat{L_a})$, i.e. $Q$ is principal. On the other hand, if $Q$ is connected and principal then $\dis(Q)$ is semiregular and transitive, i.e. regular. \\
% Let $Q=\Q(G,f)$ be a principal quandle. Then $1^{\dis(Q)}= [G,f]=G$ and therefore $[G,f]\cong \dis(Q)$.
%\end{proof}
%
Proposition \ref{On maximal proj sub} and Corollary \ref{principal} apply to principal quandles, so in particular they are crossed set. Principal quandles are homogeneous so left and right multiplications are all conjugate. Indeed if $Q$ is a quandle then $L_{h(a)}=hL_a h^{-1}$ and $R_{h(a)}=h R_a h^{-1}$ for every $h\in \aut{Q}$ and $a\in Q$. Hence item (i) of Proposition \ref{proj sub of crossed} is equivalent to $Fix(f)=Fix(L_1)=1$ and item(iii) of Proposition \ref{On maximal proj sub} is equivalent to injectivity of the map $R_1:a\mapsto af(a)^{-1}$.
%\begin{proposition}\label{principal2}
%Let $Q=\Q(G,f)$ be a principal quandle. The following are equivalent:
%\begin{itemize}
%\item[(i)] $Q$ is faithful.
%\item[(ii)] $Q$ has no projection subquandles.
%\item[(iii)] $Fix(L_1)=\{1\}$.
%\item[(iv)] The map $R_0:a\mapsto af(a)^{-1}$ is injective.
%\end{itemize}
%If $Q$ is finite and faithful, then $Q$ is latin.
%\end{proposition}
The following is a direct Corollary of Proposition \ref{On maximal proj sub} using that the orbits of displacement group in an affine quandle $Q=\aff(A,f)$ are the coset with respect to $Im(1-f)$ and that $R_0=1-f$ (it can be found in \cite{HSV}).
\begin{corollary}\label{affine}
Let $Q=\aff(A,f)$ be an affine quandle. The following are equivalent: 
\begin{itemize}
\item[(i)] $Q$ is latin. 
\item[(ii)] $Q$ is connected and faithful.
\end{itemize}
If $Q$ is finite and connected, then it is latin.
\end{corollary}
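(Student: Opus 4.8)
The plan is to push everything through the defining operation $a\ast b=(1-f)(a)+f(b)$ and reduce the three assertions to elementary facts about the group endomorphism $1-f$ of $A$, using that an affine quandle is in particular semiregular (being principal), so that Proposition \ref{On maximal proj sub} applies. Two identities, already recorded in the paragraph preceding the statement, do all the work: (a) the displacement group $\dis(Q)\cong[A,f]$ is, in additive notation, the subgroup $Im(1-f)$ acting by translations, so the connected components of $Q$ are exactly the cosets of $Im(1-f)$ and $Q$ is connected if and only if $1-f$ is surjective; and (b) $R_0(a)=a\ast 0=(1-f)(a)$, i.e. $R_0=1-f$ as a map. Since affine quandles are homogeneous, every right multiplication $R_b$ is conjugate to $R_0$ (for instance by a translation, which is an automorphism of $Q$), so $Q$ is latin if and only if $1-f$ is bijective; and by Proposition \ref{On maximal proj sub} (applicable since $Q$ is semiregular) $Q$ is faithful if and only if the right multiplications are injective if and only if $1-f$ is injective.

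Granting these reductions, the equivalence is immediate. First I would prove (i) $\Rightarrow$ (ii): latin means $1-f$ bijective, hence surjective, hence $Q$ connected; bijectivity of $R_0$ forces the right multiplications to be injective, so by the implication (iii) $\Rightarrow$ (i) of Proposition \ref{On maximal proj sub} the quandle is faithful. Then (ii) $\Rightarrow$ (i): connectedness gives $1-f$ surjective, faithfulness together with (i) $\Rightarrow$ (iii) of Proposition \ref{On maximal proj sub} gives $R_0=1-f$ injective, hence $1-f$ is bijective and $Q$ is latin. For the last clause, if $Q$ is finite and connected then $1-f$ is a surjective self-map of a finite set, hence bijective, so $Q$ is latin; alternatively one observes that surjectivity of $1-f$ makes the right multiplications injective, so $Q$ is faithful, and then applies Corollary \ref{principal} to the finite semiregular quandle $Q$.

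I do not expect a genuine obstacle here: the content is entirely in the two identities of the first paragraph, each a one-line computation from $a\ast b=(1-f)(a)+f(b)$, and the only point requiring care is keeping straight which direction of Proposition \ref{On maximal proj sub} is invoked in each implication.
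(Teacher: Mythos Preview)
Your proposal is correct and is essentially a fully written-out version of the paper's one-line proof, which merely says the corollary follows from Proposition~\ref{On maximal proj sub} together with the identifications $\dis(Q)\leftrightarrow Im(1-f)$ and $R_0=1-f$. The reductions you spell out (connected $\Leftrightarrow$ $1-f$ surjective, faithful $\Leftrightarrow$ $1-f$ injective via Proposition~\ref{On maximal proj sub}, latin $\Leftrightarrow$ $1-f$ bijective via homogeneity) are exactly what the paper intends.
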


%
%\begin{proposition}\label{principal}
%Let $Q=\Q(G,f)$ be a principal quandle. The following are equivalent:
%\begin{enumerate}
%\item[(i)] $Q$ is faithful.
%\item[(ii)] $Fix(f)=\{1\}$.
%\item[(iii)] $Q$ has no projection subquandles.
%\item[(iv)] Right multiplications are injective.
%\end{enumerate}
%In particular, finite faithful principal quandles are latin.
%\end{proposition}
%\begin{proof}
%
%\end{proof}
\subsection{Subquandles and Automorphisms of principal quandles}
Principal quandles are homogeneous, so every subquandle of $Q=\Q(G,f)$ is given by $aS$ where $a\in G$ and $S$ is a subquandle containing $1$. So, up to isomorphism it is enough to consider subquandles containing $1$.

\begin{lemma}\label{on sub of princ}
Let $Q=\Q(G,f)$ be a principal quandle and $\alpha\in Con(Q)$.
\begin{itemize}
\item[(i)] 
%The subquandles of $Q$ are union of cosets of a $f$-invariant subgroup of $G$. In particular 
Connected subquandles of $Q$ are cosets of a $f$-invariant subgroup of $G$ and they are principal.
\item[(ii)] If $Q$ is connected then $[a]_\alpha$ is principal over $\dis(Q)_{[a]_\alpha}$.
\end{itemize}

\end{lemma}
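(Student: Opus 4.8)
The plan for (i) is to reduce to subquandles containing the identity and then identify such a subquandle with an explicit subgroup of $G$. Let $M$ be a connected subquandle. Since the left regular action of $G$ on itself is by automorphisms of $Q$, for any fixed $a\in M$ the map $x\mapsto a^{-1}x$ is an automorphism of $Q$ carrying $M$ to an isomorphic connected subquandle $a^{-1}M$ containing $1$, so it suffices to treat the case $1\in M$. For such an $M$, closure under $\ast$ and $\backslash$ together with the identities $1\ast x=f(x)$ and $1\backslash x=f^{-1}(x)$ forces $f(M)=M$, so $M$ is $f$-invariant. A short computation using $f\in\aut G$ shows that for $x,y\in M$ the map $L_xL_y^{-1}$ acts on $G$ as the left translation by $(xf(x)^{-1})(yf(y)^{-1})^{-1}$; hence, viewed as a permutation group of $M$, $\dis(M)$ is the restriction to $M$ of the group of left translations by $H=\langle xf(x)^{-1}:x\in M\rangle\leq G$ (the factor indexed by $y$ is absorbed because $1\in M$), and $H$ stabilises $M$ setwise since each $L_xL_y^{-1}$ restricts to an automorphism of $M$. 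As $1\in M$, this restriction is faithful, so connectedness of $M$ means $H$ is transitive on $M$, whence $M=H\cdot 1=H$ is an $f$-invariant subgroup of $G$ carrying exactly the operation of $\Q(M,f|_M)$. Translating back, the original $M$ is a coset of this $f$-invariant subgroup and is isomorphic, via a left translation, to a principal quandle.

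For (ii) I would appeal to the general equivalence (i)$\Leftrightarrow$(ii) of Proposition~\ref{principal and regular}, which does not assume connectedness of the quandle in question. Since $Q$ is connected, Lemma~\ref{block stab} gives that $\dis(Q)_{[a]_\alpha}$ is transitive on $[a]_\alpha$. Every element of $\dis(Q)$ preserves the congruence $\alpha$, so for $b\in[a]_\alpha$ the subgroup $\dis(Q)_b$ already fixes $[b]_\alpha=[a]_\alpha$ setwise; as principal quandles are semiregular, the stabiliser of $b$ inside $\dis(Q)_{[a]_\alpha}$ is $\dis(Q)_{[a]_\alpha}\cap\dis(Q)_b=\dis(Q)_b=1$. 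Thus $\dis(Q)_{[a]_\alpha}$ acts regularly, hence faithfully, on $[a]_\alpha$ and embeds into $\aut{[a]_\alpha}$ as a regular subgroup. It is $\widehat{L_a}$-invariant because $\dis(Q)\trianglelefteq\lmlt(Q)$ and $L_a$ fixes $a$, so $L_a([a]_\alpha)=[a]_\alpha$. Proposition~\ref{principal and regular} then gives $[a]_\alpha\cong\Q(\dis(Q)_{[a]_\alpha}|_{[a]_\alpha},\widehat{L_a}|_{[a]_\alpha})\cong\Q(\dis(Q)_{[a]_\alpha},\widehat{L_a}|_{[a]_\alpha})$, that is, $[a]_\alpha$ is principal over $\dis(Q)_{[a]_\alpha}$.

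The only genuine computation is the translation formula for $L_xL_y^{-1}$; once that is in hand, both parts are bookkeeping with orbits and stabilisers. The step that needs care in (i) is the passage from $\dis(M)$, a priori only a subgroup of $\aut{M}$, to a concrete subgroup of $G$: one must check that the restriction-to-$M$ map from the group of left translations by $H$ is a faithful homomorphism (this is precisely where $1\in M$ is used), so that transitivity of $\dis(M)$ really does force the set-theoretic equality $M=H$. In (ii) the subtlety is to notice that the block $[a]_\alpha$ need not itself be connected, so one cannot invoke part (i) directly, and must instead use the unconditional direction of Proposition~\ref{principal and regular}.
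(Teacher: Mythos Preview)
Your proof is correct and follows essentially the same approach as the paper: for (i) both reduce to $1\in M$ and identify $M$ with the $f$-invariant subgroup $\langle xf(x)^{-1}:x\in M\rangle$ via transitivity of $\dis(M)$, and for (ii) both show that $\dis(Q)_{[a]_\alpha}$ acts regularly on the block and is $\widehat{L_a}$-invariant, then apply Proposition~\ref{principal and regular}. Your added commentary---the faithfulness of the restriction in (i) and the observation that $[a]_\alpha$ need not be connected so (i) cannot be invoked directly---makes explicit points the paper leaves implicit, but the underlying argument is the same.
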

\begin{proof}
(i) Let $M$ be a connected subquandle of $Q$, without loss of generality we can assume that $1\in M$. 
%hen $M$ is union of the orbits with respect to $\dis(M)=\langle \setof{af(a)^{-1}}{a\in M}\rangle$, i.e. $M=\bigcup_{a\in M} \dis(M)a$ and $\dis(M)$ is $f$-invariant. 
Then $M=1^{\dis(M)}=\dis(M)=\langle\setof{af(a)^{-1}}{a\in M}\rangle$, so $\dis(M)$ is a $f$-invariant subgroup since $f(a)=1*a\in M$ for every $a\in M$, it is regular on $M$ and then $M$ is principal.

(ii) If $Q$ is connected the group $N=\dis(Q)_{[a]_\alpha}$ is regular over $[a]_\alpha$ and it is stable under $f=\widehat{L}_a$. Therefore, $[a]_\alpha\cong \Q(N,f|_N)$ is principal.
\end{proof}

If $Q=\Q(G,f)$ is finite  the size of non projection subquandles of $Q$ and the size of $Q$ are not coprime (indeed a subquandle $M$ containing $1$ is union of cosets with respect to $\dis(M)$ which is an $f$-invariant subgroup of $G$) and the size of connected subquandles of $Q$ divide the size of $Q$.

%
%Connected quandles decompose as disjoint union of principal quandles.
%\begin{lemma}\label{blocks of sigma}
%Let $Q$ be a connected quandle. Then $[a]_{\sigma_Q}=a^{N_a}$ and it is a principal quandle over $N_a/\dis(Q)_a$ for every $a\in Q$.
%\end{lemma}
%\begin{proof}
%Since $Q$ is connected $[a]_{\sigma_Q}\bigcap a^{\dis(Q)}=[a]_{\sigma_Q}=a^{N_a}$. Moreover $N_a$ is transitive over $[a]_{\sigma_Q}$, it contains $\dis(Q)_a$ as a normal subgroup and $h|_{[a]_{\sigma_Q}}(a)=a$ if and only if $h|_{[a]_{\sigma_Q}}=1$. So $G={N_a}|_{[a]_{\sigma_Q}}\cong N_a/\dis(Q)_a$ is regular over $[a]_{\sigma_Q}$ and since it is stable under the inner automorphism of $f={L_a}|_{[a]_{\sigma_Q}}$ then it provides a principal representation as $[a]_{\sigma_Q}\cong \Q(G,\widehat{f}|_G)$.
%\end{proof}

According to Lemma \ref{semidirect} $\lmlt(\Q(G,f))\cong [G,f]\rtimes \langle f\rangle $. The following Proposition shows the structure of the automorphism group of connected principal quandles extending \cite[Corollary 1.25]{AG} and \cite[Proposition 2.1]{Hou_Aut}.
\begin{proposition}\label{Prop:Aut of principal}
Let $Q=\Q(G,f)$ be a connected quandle. Then $\aut{Q}\cong G\rtimes C_{\aut{G}}(f)$.
% and $\lmlt(Q)\cong G\rtimes \langle f\rangle$.
\end{proposition}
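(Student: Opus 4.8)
The plan is to realise $\aut{Q}$ as a subgroup of $\mathrm{Sym}(G)$ and exploit the regular subgroup $G\le\aut{Q}$. First I would record the easy containment: the left-translation action $\lambda:G\to\mathrm{Sym}(G)$, $\lambda_g(x)=gx$, embeds $G$ as a regular subgroup of $\aut{Q}$ (this is already noted in the excerpt for principal quandles), and any $\varphi\in C_{\aut{G}}(f)$ fixes the identity $1\in G$ and commutes with $f$, hence $\varphi(a\ast b)=\varphi(af(a^{-1}b))=\varphi(a)f(\varphi(a)^{-1}\varphi(b))=\varphi(a)\ast\varphi(b)$, so $\varphi\in\aut{Q}_1$. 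Since $\lambda(G)$ is regular, it meets the stabiliser $\aut{Q}_1$ trivially, and $\lambda(G)$ is normal in $\aut{Q}$ (conjugating a left translation by any automorphism of $Q$ gives again a left translation, because automorphisms of a homogeneous quandle permute the left translations and, more to the point, $h\lambda_g h^{-1}$ fixes no structure but is a bijection commuting with all $\lambda$... — better: one checks directly $h\lambda_g h^{-1}=\lambda_{h(g)}$ when $h$ fixes $1$, and in general by the Burnside-type argument that a regular normaliser... ). Concretely: for $h\in\aut{Q}_1$ and $g\in G$, $h\lambda_g h^{-1}$ is an automorphism of $Q$ sending $1\mapsto h(g)$; if we show $h\lambda_g h^{-1}=\lambda_{h(g)}$ then normality of $\lambda(G)$ in $\aut{Q}=\langle\lambda(G),\aut{Q}_1\rangle$ follows. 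This identity holds because both maps are automorphisms of $Q$ agreeing at $1$, and... no — two automorphisms agreeing at one point need not be equal.

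So the real content is the reverse direction, and the key step is: \emph{every} element of $\aut{Q}_1$ lies in $C_{\aut{G}}(f)$, i.e. the stabiliser of $1$ consists of group automorphisms of $G$ commuting with $f$. Here I would use connectedness. Let $h\in\aut{Q}_1$. By Proposition \ref{principal and regular}, $G=[G,f]\cong\dis(Q)$ and $\dis(Q)$ is the regular normal subgroup $\lambda(G)$ of $\lmlt(Q)$; moreover $\lambda(G)$ is characteristic-like in the sense that it is a \emph{normal} subgroup of $\lmlt(Q)$, and $\aut{Q}$ normalises $\lmlt(Q)$, hence normalises $\dis(Q)=\lambda(G)$. (This is the crucial normality: $h\lambda_g h^{-1}\in\lambda(G)$, so we may \emph{define} a map $\psi_h:G\to G$ by $h\lambda_g h^{-1}=\lambda_{\psi_h(g)}$, and $\psi_h$ is a group automorphism of $G$ since conjugation by $h$ is a group automorphism of $\lambda(G)\cong G$.) Evaluating $h\lambda_g h^{-1}$ at $1=h(1)$ gives $h(g)=\lambda_{\psi_h(g)}(1)=\psi_h(g)$, so $h=\psi_h$ \emph{as a map on the underlying set $G$}, and therefore $h\in\aut{G}$. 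Finally, $h$ fixes $1$ and $h$ is a quandle automorphism, so it commutes with $L_1=f$ (since automorphisms conjugate $L_1$ to $L_{h(1)}=L_1$), giving $h\in C_{\aut{G}}(f)$.

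With both inclusions in place, the splitting is immediate: $\aut{Q}=\lambda(G)\,\aut{Q}_1$ because $\lambda(G)$ is transitive, $\lambda(G)\cap\aut{Q}_1=1$ because $\lambda(G)$ is regular, and $\lambda(G)\triangleleft\aut{Q}$ by the normality established above; hence $\aut{Q}\cong\lambda(G)\rtimes\aut{Q}_1\cong G\rtimes C_{\aut{G}}(f)$, with the action being the natural one of $C_{\aut{G}}(f)$ on $G$. I expect the main obstacle to be pinning down cleanly \emph{why $\aut{Q}$ normalises $\dis(Q)$ and why the conjugation map $\psi_h$ recovers $h$ on points} — that is, making rigorous the identification of the point-set action of $h$ with a group automorphism of $G$; everything else (transitivity, regularity, the $C_{\aut{G}}(f)\subseteq\aut{Q}_1$ direction, and assembling the semidirect product) is routine. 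One should also double-check the edge case where $G$ is nonabelian to be sure the formula $a\ast b=af(a^{-1}b)$ is used in the correct order throughout, but no genuine difficulty lies there.
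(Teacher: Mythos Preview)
Your argument is correct and is essentially the paper's proof: both decompose $\aut{Q}=\dis(Q)\rtimes\aut{Q}_1$ via regularity and normality of $\dis(Q)=\lambda(G)$ in $\aut{Q}$, and both establish $\aut{Q}_1=C_{\aut{G}}(f)$ by recognising that conjugation by $h\in\aut{Q}_1$ on $\lambda(G)$ recovers $h$ itself as a group automorphism commuting with $L_1=f$. The paper merely unpacks your $\psi_h$ step as an explicit computation on the generators $t(a)=af(a)^{-1}$ of $G=[G,f]$, using $hL_aL_1^{-1}h^{-1}=L_{h(a)}L_1^{-1}$ and evaluating at $1$ --- exactly the evaluation you describe.
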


\begin{proof}
Let $h\in \aut{Q}$, such that $h(1)=b$ and let $\lambda_b$ the left action of $b$ on $G$. Since $Q$ is connected $\lambda_b\in \dis(Q)$ and then $\lambda_b^{-1}h\in \aut{Q}_1$. So $\aut{Q}= \dis(Q)\aut{Q}_1$ and since $\dis(Q)$ is normal and $\dis(Q)\bigcap \aut{Q}_1=1$, we have that $\aut{Q}\cong G\rtimes \aut{Q}_1$.

Let $h\in \aut{Q}_1$ and let $t(a)=R_1(a)=af(a)^{-1}$. Then $L_{h(1)}=h L_1 h^{-1}=L_1$ and $h(t(a))=h(a\ast 1)=h(a)\ast 1= R_1(h(a))=t(h(a))$. The quandle $Q$ is connected, so $G=[G,f]=\langle \setof{t(a)}{a\in G}\rangle$. Hence we have:
\begin{eqnarray*}
 h(t(a_1)^{k_1}\ldots t(a_n)^{k_n})&=& h((L_{a_1}L_1^{-1})^{k_1}\ldots (L_{a_n} L_1^{-1})^{k_n}(1))=\\
&=& (L_{h(a_1)}L_1^{-1})^{k_1}\ldots (L_{h(a_n)} L_1^{-1})^{k_n}(1)=\\
&=& t(h(a_1))^{k_1}\ldots t(h(a_n))^{k_n}=\\
&=& h(t(a_1))^{k_1}\ldots h(t(a_n))^{k_n}.
\end{eqnarray*}
Therefore $h\in C_{\aut{G}}(f)$. The inclusion $C_{\aut{G}}(f)\leq \aut{Q}_1$ is clear. 
%The left multiplication group contains the displacement group and $\langle L_a\rangle\bigcap	 \dis(Q)\leq \dis(Q)_a=1$. Hence $\lmlt(Q)=\dis(Q)\langle L_a\rangle$ splits as a semidirect product.
\end{proof}
%
%
%Note that $\aut{Q}_1$ is given by the automorphism of the algebra $(G,\cdot, {}^{-1}, 1, f,f^{-1})$.
%
\subsection{Congruence lattice of connected principal quandles}

Connected semiregular quandles are principal, so we can apply Proposition \ref{semiregular factor} to understand principal factor of connected quandles.
\begin{Proposition}\label{principal factor} Let $Q$ be a connected quandle and $\alpha\in Con(Q)$.
\begin{itemize}
\item[(i)] $Q/\alpha$ is principal if and only if $\dis^\alpha=\dis(Q)_{[a]_\alpha}$ for every $a\in Q$.
\item[(ii)]  Let $\beta= \bigwedge_{i\in I} \alpha_i\in Con(Q)$. If $Q/\alpha_i$ is principal for every $i\in I$, then $Q/\beta$ is principal.
\end{itemize}
\end{Proposition}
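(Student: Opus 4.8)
The plan is to reduce both items to Proposition~\ref{semiregular factor}, using that for \emph{connected} quandles the property of being principal and the property of being semiregular coincide. First I would record this equivalence: principal quandles are semiregular (already observed), and conversely a connected semiregular quandle $P$ has displacement group $\dis(P)$ that is transitive and semiregular, hence regular, so $P$ is principal by Proposition~\ref{principal and regular}.

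Next I would note that, since $Q$ is connected, every factor $Q/\alpha$ is connected as well (a homomorphic image of a connected quandle is connected, cf.\ Lemma~\ref{block stab}), and in particular $Q/\beta$ is connected. Combining this with the previous step, for every $\alpha\in Con(Q)$ the factor $Q/\alpha$ is principal if and only if it is semiregular.

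With this dictionary in hand, item (i) follows directly: $Q/\alpha$ is principal iff $Q/\alpha$ is semiregular iff $\dis^\alpha=\dis(Q)_{[a]_\alpha}$ for every $a\in Q$, the last equivalence being exactly Proposition~\ref{semiregular factor}(i). For item (ii), if each $Q/\alpha_i$ is principal then each $Q/\alpha_i$ is semiregular, so $Q/\beta$ is semiregular by Proposition~\ref{semiregular factor}(ii); since $Q/\beta$ is moreover connected, it is principal.

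I do not expect a real obstacle here. The one point that needs care is that ``principal'' is in general strictly stronger than ``semiregular'' and strictly weaker than ``connected and semiregular'', so the whole argument hinges on $Q$ being connected and on that property being inherited by all the quotients involved; once this is isolated, everything is a formal consequence of Proposition~\ref{semiregular factor}.
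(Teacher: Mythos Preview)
Your proposal is correct and matches the paper's approach exactly: the paper simply remarks, right before stating the proposition, that ``connected semiregular quandles are principal, so we can apply Proposition~\ref{semiregular factor}'' and then states the result without further argument. Your write-up merely spells out this reduction in detail, including the observation that factors of connected quandles are connected, which is precisely what the paper leaves implicit.
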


%
%\begin{proof}
%(i) According to Proposition \ref{principal and regular}, $Q/\alpha$ is principal if and only if $\dis(Q/\alpha)$ is regular, i.e. $\dis(Q/\alpha)_{[a]_\alpha}=1$. Since $\dis(Q)_{[a]_\alpha}=\pi_\alpha^{-1}(\dis(Q/\alpha)_{[a]})$, $Q/\alpha$ is principal if and only if $\dis^\alpha=\dis(Q)_{[a]_\alpha}$. 
%
%(ii) Let $\beta=\bigwedge_{i\in I}\alpha_i$. According to \cite{CP}, we have $\dis^{\beta}=\bigcap_{i\in I} \dis^{\alpha_i}$ and by item (i), $\dis^{\alpha_i}=\dis(Q)_{[a]_{\alpha_i}}$, for every $i\in I$. So,
% $$\dis(Q)_{[a]_\beta}= \bigcap_{i\in I}\dis(Q/\alpha_i)_{[a]_{\alpha_i}}=\bigcap_{i\in I}\dis^{\alpha_i} =\dis^\beta.$$
%Therefore we can conclude that $Q/\beta$ is principal by virtue of item (i).
%\end{proof}
%Note that if $Q/\alpha$ is principal, then $\dis(Q)_a\leq \dis^\alpha$ for every $a\in Q$.
The class of (connected) principal quandles is not closed under $\textbf{H}$. Indeed SmallQuandle(12,i) with $i=1,2$ in the RIG library of GAP are principal connected quandles with a non-principal factor of size $6$.

Let $Q=\Q(G,f)$. Let us denote by $\sim_N$ the partition of $G$ given by left cosets with respect to $N\leq G$ and define
\begin{equation}\label{f-invariant and cong}
Sub(G,f)=\setof{N\leq G}{f(N)=N \text{ and }[N,f]\leq Core_G(N)}.
\end{equation}
Note that every subgroup of $Fix(f)$ and every $f$-invariant normal subgroup is in $Sub(G,f)$. Moreover characteristic subgroups of $G$ are contained in $Sub(G,f)$ for every $f\in \aut{G}$. Subgroups in \eqref{f-invariant and cong} provides congruences for every principal quandles and if a principal quandle is connected all its congruences arise in this way. 
%and we can identify its congruence lattice with the sublattice of $f$-invariant subgroups of $G$ in \eqref{f-invariant and cong}.
\begin{theorem}\label{congruence for principal}
Let $Q=\Q(G,f)$ be a quandle. Then $$\setof{\sim_N}{N\in Sub(G,f)}\subseteq Con(Q).$$
If $Q$ is connected then $Con(Q)=Sub(G,f)$.
\end{theorem}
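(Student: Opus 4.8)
The plan is to prove the two assertions separately. For the inclusion $\setof{\sim_N}{N\in Sub(G,f)}\subseteq Con(Q)$, which holds for an arbitrary principal quandle $Q=\Q(G,f)$, I would fix $N\in Sub(G,f)$ and check directly that the partition of $G$ into left cosets of $N$ is compatible with $*$ and $\backslash$. Writing $a'=an_1$, $b'=bn_2$ with $n_1,n_2\in N$ (so $a\sim_N a'$, $b\sim_N b'$), a short computation using $a*b=af(a^{-1}b)$ gives
\[
(a*b)^{-1}(a'*b')=f(a^{-1}b)^{-1}\bigl(n_1 f(n_1)^{-1}\bigr)f(a^{-1}b)\cdot f(n_2),
\]
and the analogous identity with $f$ replaced by $f^{-1}$ for $a\backslash b=af^{-1}(a^{-1}b)$. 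Since $f(N)=N$ (hence also $f^{-1}(N)=N$) the factor $f(n_2)$ lies in $N$, while $n_1f(n_1)^{-1}\in[N,f]\le Core_G(N)$; as $Core_G(N)\trianglelefteq G$, conjugating it by $f(a^{-1}b)\in G$ keeps it in $Core_G(N)\le N$. Thus $a*b\sim_N a'*b'$, and likewise for $\backslash$, so $\sim_N\in Con(Q)$. (Alternatively: $Core_G(N)$ is $f$-invariant because $f$ is onto $G$, so one may pass to $\Q(G/Core_G(N),\bar f)$ and invoke the well-definedness of the construction $\Q(\cdot,\cdot,\cdot)$ of Example \ref{example2}, applied to the subgroup $N/Core_G(N)\le Fix(\bar f)$.)

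For the reverse inclusion when $Q$ is connected, I would first use Proposition \ref{principal and regular} to identify $Q$ with the underlying set of $G$ so that $\dis(Q)\cong G=[G,f]$ acts by left translations (regularly, since $Q$ is connected) and, writing $1$ for the basepoint, $L_1=f$ as a map $Q\to Q$ and $L_bL_1^{-1}$ acts as left multiplication by $bf(b)^{-1}$. Let $\alpha\in Con(Q)$. Since every $L_a$ and $L_a^{-1}$ respects $\alpha$, the whole group $\lmlt(Q)\supseteq\dis(Q)$ permutes the $\alpha$-classes, so the $\alpha$-classes form a block system for the regular action of $G$ on itself; consequently the block $N:=[1]_\alpha$ is a subgroup of $G$ and $\alpha=\sim_N$.

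It then remains to check $N\in Sub(G,f)$. For $f$-invariance, $L_1=f$ sends $\alpha$-classes to $\alpha$-classes and fixes $1$, so $f(N)$ is the $\alpha$-class of $1$, i.e. $f(N)=N$. For $[N,f]\le Core_G(N)$, take $n\in N$, so $n\,\alpha\,1$; then the maps induced on $Q/\alpha$ satisfy $\overline{L_n}=\overline{L_1}$, hence $\overline{L_nL_1^{-1}}=\mathrm{id}$, i.e. $L_nL_1^{-1}$ fixes every $\alpha$-class setwise. But $L_nL_1^{-1}$ acts as left multiplication by $nf(n)^{-1}$, so $nf(n)^{-1}\,gN=gN$ for all $g\in G$, that is $nf(n)^{-1}\in\bigcap_{g\in G} gNg^{-1}=Core_G(N)$. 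Therefore $[N,f]=\langle\,nf(n)^{-1}: n\in N\,\rangle\le Core_G(N)$ and $N\in Sub(G,f)$. Finally $N\mapsto\sim_N$ is injective since $N$ is recovered as $[1]_{\sim_N}$, so it is a bijection $Sub(G,f)\to Con(Q)$ (and it is visibly inclusion-preserving).

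I expect the only genuine subtlety to be the passage from ``$\alpha$ is a congruence'' to ``$[N,f]\le Core_G(N)$'': the key observation is that the generators $nf(n)^{-1}$ of $[N,f]$ are exactly the translations realised by the displacement generators $L_nL_1^{-1}$, and that these lie in $\dis^\alpha$ precisely because $n\,\alpha\,1$ (equivalently $L_nL_1^{-1}\in\dis_\alpha\le\dis^\alpha$). Everything else is the routine dictionary between the quandle $\Q(G,f)$, its displacement group, and partitions into left cosets.
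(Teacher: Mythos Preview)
Your proof is correct and follows essentially the same route as the paper: the forward inclusion is the same direct computation (you write $(a*b)^{-1}(a'*b')$ and conjugate $n_1f(n_1)^{-1}\in Core_G(N)$ past $f(a^{-1}b)$, the paper pushes $nf(n)^{-1}$ past $f(a)^{-1}f(b)$), and for the converse you both identify $N=[1]_\alpha$ with the setwise stabiliser $\dis(Q)_{[1]}$ via the regular action of $G\cong\dis(Q)$, then obtain $[N,f]\le Core_G(N)$ from $L_nL_1^{-1}\in\dis_\alpha\le\dis^\alpha=Core_G(N)$. Your explicit block-system phrasing and the alternative remark via $\Q(G/Core_G(N),\bar f)$ are nice touches not spelled out in the paper, but the core argument is the same.
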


\begin{proof}
Let $N\in Sub(G,f)$, $a,b\in Q$ and let $n,m\in N$. Then
	\begin{eqnarray*}
		an*bm&=&a\underbrace{nf(n)^{-1}}_{\in Core_G(N)}f(a)^{-1}f(b)f(m)=af(a)^{-1}f(b)n^\prime f(m)=\\
		&=& (a*b)n^\prime f(m)\in (a*b)N.
	\end{eqnarray*}
	Then $an*bm\, \sim_N \, a*b$. Using that $n f^{-1}(n)^{-1}=f^{-1}(nf(n)^{-1})^{-1}\in [N,f]$ we can prove similarly that $an \backslash bm\, \sim_N\, a\backslash b$. Then $\sim_N$ is a congruence of $Q$.
	
	 Assume that $Q$ is connected and let $\alpha\in Con(Q)$ and $N=\dis(Q)_{[1]}$. Then $[1]=N$ is an $f$-invariant subgroup of $G$ and $\dis(Q)_{[a]}=a\dis(Q)_{[1]}a^{-1}$. Therefore $b\,\alpha\, a$ if and only if $b=a g a^{-1} a=ag$ for some $g\in  N$, i.e. $[a]=a N$. Moreover 
\begin{equation}
[N,f]=\langle af(a)^{-1},\, a\in N\rangle =\langle L_a L_1^{-1},\, a\in [1]\rangle \leq \dis_\alpha\leq \dis^\alpha=Core_G(N).
\end{equation}
So every congruence of $Q$ arises as the left cosets partition with respect to a subgroup in $Sub(G,f)$.
%Proceeding by equivalences we have that for every $a,b\in G$ and every $n,m\in N$ there exists $l\in N$ such that
%\begin{eqnarray}\label{last cond}
%f(a^{-1}b)^{-1}nf(n^{-1})f(a^{-1}b)&=&lf(m)^{-1}\in N.
%\end{eqnarray}
%The condition \eqref{last cond} is equivalent to have $[N,f]=\langle\setof{nf(n)^{-1}}{n \in N}\rangle\leq Core_G(N)$. Therefore congruences are determined by right-cosets partition by subgroups in $Sub(G,f)$. 
\end{proof}
%If $Q=\Q(G,f)$ is a connected quandle and $N\in Sub(G,f)$, we denote by $Q/N$ the factor of $Q$ determined by $N$.

\begin{corollary}\label{normal sub and cong}
Let $Q=\Q(G,f)$ be a connected quandle. The following are equivalent:
\begin{itemize}
\item[(i)]  $Q/\sim_N$ is principal. 
\item[(ii)] $N\trianglelefteq G$. 
%\item[(iii)] $\dis^{\sim_N}=N$.
\end{itemize}
%Then $Q/\sim_N$ is principal if and only if $N\triangleleft G$.
If the blocks of $\sim_N$ are connected then $Q/\sim_N$ is principal and $\dis_{\sim_N}=\dis^{\sim_N}=N$.
\end{corollary}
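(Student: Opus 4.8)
The plan is to combine the classification of $Con(Q)$ for connected principal quandles (Theorem~\ref{congruence for principal}) with the principality criterion of Proposition~\ref{principal factor}(i), carrying out all the work inside the group $G$. Throughout I identify $\dis(Q)$ with $G=[G,f]$ via the principal representation of Proposition~\ref{principal and regular}, so that $\dis(Q)$ acts on $Q=G$ by left translations; in this picture the standing hypothesis ``$\sim_N\in Con(Q)$'' is just ``$N\in Sub(G,f)$'', since $Q$ is connected.

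First I would record the group data attached to $\sim_N$. Its block through $1$ is $[1]_{\sim_N}=N$, and the setwise stabiliser in $\dis(Q)=G$ of the block $aN$ is $\dis(Q)_{[a]_{\sim_N}}=aNa^{-1}$ (the stabiliser of the coset $aN$ under left translation). Since $Q$ is connected so is $Q/\sim_N$, and hence the formula for $\dis^{\alpha}$ recalled in Section~\ref{Sec: prelim} gives $\dis^{\sim_N}=Core_{\dis(Q)}(\dis(Q)_{[1]_{\sim_N}})=Core_G(N)$. Now Proposition~\ref{principal factor}(i) says that $Q/\sim_N$ is principal exactly when $\dis^{\sim_N}=\dis(Q)_{[a]_{\sim_N}}$ for every $a$, i.e.\ when $Core_G(N)=aNa^{-1}$ for all $a\in G$. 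Evaluating at $a=1$ forces $Core_G(N)=N$, which is precisely $N\trianglelefteq G$; conversely, if $N\trianglelefteq G$ then $aNa^{-1}=N=Core_G(N)$ for all $a$, so the criterion holds. This gives (i)$\Leftrightarrow$(ii).

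For the last assertion, assume the blocks of $\sim_N$ are connected. Then the block $[1]_{\sim_N}=N$ is itself a connected subquandle containing $1$, so by Lemma~\ref{on sub of princ}(i) --- concretely, by the identity $N=1^{\dis(N)}=\langle af(a)^{-1}:a\in N\rangle$ appearing in its proof --- we get $N=[N,f]$. Plugging this into the chain $[N,f]\le\dis_{\sim_N}\le\dis^{\sim_N}=Core_G(N)\le N$ produced in the proof of Theorem~\ref{congruence for principal} collapses it to a string of equalities: $\dis_{\sim_N}=\dis^{\sim_N}=N=Core_G(N)$. In particular $N\trianglelefteq G$, so $Q/\sim_N$ is principal by the equivalence just established.

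I do not expect a genuine obstacle here: once the identification $\dis(Q)=G$ is fixed, the proof is a short chase through formulas already in hand. The only delicate point is bookkeeping --- keeping straight that the index set ``$a\in Q$'' in Proposition~\ref{principal factor}(i) coincides, under this identification, with the index set ``$g\in G$'' implicit in $Core_G(N)=\bigcap_{g\in G}gNg^{-1}$, which is exactly what allows the single evaluation at $a=1$ to close the nontrivial implication.
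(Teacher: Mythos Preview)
Your argument is correct and follows essentially the same route as the paper: identify $\dis(Q)$ with $G$, recognise $\dis(Q)_{[1]_{\sim_N}}=N$ and $\dis^{\sim_N}=Core_G(N)$, and feed this into Proposition~\ref{principal factor}(i). For the final assertion the paper argues directly that connectedness of $[1]_{\sim_N}$ makes $\dis_{\sim_N}$ transitive on that block, whence $1^{\dis_{\sim_N}}=\dis_{\sim_N}=N$; your variant via Lemma~\ref{on sub of princ}(i) (yielding $N=[N,f]$ and collapsing the chain $[N,f]\le\dis_{\sim_N}\le\dis^{\sim_N}\le N$) is an equivalent repackaging of the same observation.
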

\begin{proof}
The subgroup $N=\dis(Q)_{[1]}$ contains $\dis^{\sim_N}$ as its core. Using Proposition \ref{principal factor} (i) we have that $Q/\sim_N$ is principal if and only if $N=\dis^{\sim_N}$, i.e. $N$ is normal.\\
If $[1]_{\sim_N}$ is connected, then $\dis_{\sim_N}$ is transitive on $[1]_{\sim_N}$ and so $1^{\dis_{\sim_N}}=\dis_{\sim_N}=1^N=N$ and then $N$ is normal. Accordingly $Q/\sim_N$ is principal.
\end{proof}

\begin{example}
Let $Q=\Q(G,f)$ be a connected quandle and let $N=\setof{a\in G}{af(a)^{-1}\in Z(G)}$. A straightforward computation shows that $\zeta_Q=\c{Z(\dis(Q))}=\sim_N$. %\comment{maybe add an example such that it is different from the center}
\end{example}
\subsection{Isogroups}\label{Sec: isog}
%\footnote{A term is a well-defined expression using variables and operation symbols in a given language. For every term $t$ and every algebra $A$ (i.e. a set with an arbitrary subset of operations) $A=(A,\ldots)$, the associated {\it term function} $t^A:A^n\to A$ results from $t$ by substituting the elements of $A$ for variables and evaluating the expression. Similarly {\it polynomial functions} are defined by term function fixing some of the variables involved. Two algebras over the same set are term (polynomially) equivalent if they have the same set of term (polynomial) functions (see \cite[Section 4.3]{UA} for formal definitions).}

A latin quandle $(Q,*,\backslash)$ can be interpreted also as left-distributive (LD) quasigroup $(Q,*,\backslash,/)$ by adding right division as a basic operation. Note that if $Q$ is finite case the two structures are actually ({\it term}) equivalent since if $n$ is the order of right multiplications of $Q$ then 
$$x/y=R_y^{-1}(x)=R_y^{n-1}(x)=(\ldots(x*\underbrace{y)*y)\ldots )*y}_{n-1}.$$
In this section we will consider principal latin quandles as LD quasigroups and we call them {\it isogroups} following \cite{GALK2,Vla}. Isogroups are the LD quasigroups which are right linear over a group or equivalently they are {\it isotopic} to a group (see \cite{Bel} and \cite{Bruck} for loop isotopy). An isogroup $Q$ will be denoted as $Q=\Q(G,f,/)$. In particular $Q$ is ({\it polynomially}) equivalent to the algebra $(G,\cdot,{}^{-1},1,f,f^{-1}, t, t^{-1})$ where $t$ is the mapping $t: a\mapsto af(a)^{-1}$ (all the basic operations of the first algebra can be defined by using the basic operations of the second and some constant elements as $x\cdot y = x/1*1\backslash y$. See \cite[Section 4.3]{UA} for formal definitions of term and polynomial equivalence). The following results easily follows by the polynomial equivalence and they are available in in Russian in \cite{GALK2} and in English in \cite{Vla}.

\begin{Proposition}\label{factor of principal latin} 
Let $Q=\Q(G,f,/)$ be an isogroup. Then 
$$Con(Q)=Sub(Q,f)=\setof{\sim_N}{N\unlhd G,\ f(N)=N}$$
and all its factors are isogroups.
\end{Proposition}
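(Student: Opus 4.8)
The plan is to reduce everything to Theorem~\ref{congruence for principal} and Corollary~\ref{normal sub and cong} via the polynomial equivalence asserted in the paragraph preceding the statement, namely that an isogroup $Q=\Q(G,f,/)$ is polynomially equivalent to the algebra $(G,\cdot,{}^{-1},1,f,f^{-1},t,t^{-1})$ with $t:a\mapsto af(a)^{-1}$. First I would record that polynomially equivalent algebras have the same underlying set, the same automorphism group up to the ambient translations, and---what matters here---\emph{the same congruence lattice}; this is the statement in \cite[Section 4.3]{UA}. Hence $Con(Q)$ computed in the quandle-with-right-division signature coincides with $Con$ of the ``enriched group'' algebra above.

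Next I would identify the congruences of $(G,\cdot,{}^{-1},1,f,f^{-1},t,t^{-1})$. Since the group operations are among the basic operations, any congruence of this algebra is in particular a group congruence of $G$, hence of the form $\sim_N$ for a normal subgroup $N\trianglelefteq G$; conversely $\sim_N$ respects $\cdot$, ${}^{-1}$ and $1$ automatically. The extra constraints are exactly that $\sim_N$ be compatible with the unary operations $f,f^{-1},t,t^{-1}$. Compatibility with $f$ (equivalently $f^{-1}$, since $f$ is an automorphism and $N$ is finite-index-free here but in any case $f(N)=N\iff f^{-1}(N)=N$) says $f(N)=N$. Compatibility with $t$ and $t^{-1}$: since $Q$ is latin and connected---because principal latin quandles are connected, $\dis(Q)=[G,f]=G$ by Proposition~\ref{principal and regular}---one has $G=\langle t(a):a\in G\rangle$, and for a normal $f$-invariant $N$ the computation in the proof of Theorem~\ref{congruence for principal} shows $an\ast bm\,\sim_N\,a\ast b$ and the analogous divisibility statement, so $\sim_N$ is automatically a congruence once $N\trianglelefteq G$ and $f(N)=N$; in particular $t$-compatibility is free. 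This gives $Con(Q)=\setof{\sim_N}{N\trianglelefteq G,\ f(N)=N}$, which is exactly $Sub(Q,f)$ in the notation of the statement (here $Sub(Q,f)$ abbreviates $Sub(G,f)$ of \eqref{f-invariant and cong}, and for $N\trianglelefteq G$ the condition $[N,f]\le Core_G(N)=N$ is vacuous).

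For the final clause, that all factors are isogroups: given $N\trianglelefteq G$ with $f(N)=N$, the induced automorphism $\bar f$ of $G/N$ is well defined, and one checks directly from \eqref{op for principal} that the map $aN\mapsto [a]_{\sim_N}$ is an isomorphism $\Q(G/N,\bar f)\cong Q/\!\sim_N$ of quandles; since $Q/\!\sim_N$ is a quotient of a latin quandle it is latin (right multiplications remain surjective, hence bijective in the finite case, and in general latinity of $\Q(G/N,\bar f)$ follows as $[G/N,\bar f]=G/N$ and faithfulness, by Corollary~\ref{Latin principal} / Corollary~\ref{affine}-style reasoning for principal quandles), so $Q/\!\sim_N$ is a principal latin quandle, i.e. an isogroup. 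Equivalently one may invoke Corollary~\ref{normal sub and cong}: normality of $N$ already guarantees $Q/\!\sim_N$ is principal, and latinity is inherited.

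The only genuine subtlety---the ``hard part''---is the bookkeeping around polynomial versus term equivalence: one must make sure that ``congruences of the LD-quasigroup $(Q,\ast,\backslash,/)$'' is the right notion and that it is unaffected by adjoining the constant $1$ and passing to the polynomial clone, since congruences are insensitive to constants. In the finite case this is harmless because $(Q,\ast,\backslash)$ and $(Q,\ast,\backslash,/)$ are term-equivalent (the displayed formula $x/y=R_y^{n-1}(x)$), and for infinite isogroups one simply fixes the signature to include $/$ as in the statement. After that point every step is a direct transcription of Theorem~\ref{congruence for principal}.
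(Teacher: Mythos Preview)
Your approach via polynomial equivalence is exactly what the paper indicates (it gives no proof, only the remark that the result ``easily follows by the polynomial equivalence'' together with a citation). The direction $Con(Q)\subseteq\{\sim_N:N\trianglelefteq G,\ f(N)=N\}$ is fine. The gap is in the reverse inclusion: you check that $\sim_N$ is compatible with $\cdot,{}^{-1},1,f,f^{-1}$ and with $t$ (via Theorem~\ref{congruence for principal}, which lives in the signature $(\ast,\backslash)$), but you never verify compatibility with $t^{-1}$, equivalently with $/$. Your sentence ``in particular $t$-compatibility is free'' silently drops $t^{-1}$, and your later assertion that ``latinity of $\Q(G/N,\bar f)$ follows as $[G/N,\bar f]=G/N$ and faithfulness'' begs the question: by Proposition~\ref{On maximal proj sub}, faithfulness of the principal quotient is equivalent to injectivity of the induced map $\bar t$ on $G/N$, which is precisely $t^{-1}$-compatibility of $\sim_N$.

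In the finite case this is harmless: the induced map $\bar t$ is always surjective (it is the image of the bijection $t$), hence bijective on a finite set, and your argument then goes through and matches the paper's intended one. In the infinite case the step genuinely fails. Take $G=\mathbb{Q}$, $f=-\mathrm{id}$, $N=\mathbb{Z}$. Then $Q$ is the core $a\ast b=2a-b$, latin with $a/b=(a+b)/2$, and $N$ is normal and $f$-invariant; yet $(a+1)/b-a/b=\tfrac12\notin\mathbb{Z}$, so $\sim_{\mathbb{Z}}$ is a congruence of the quandle reduct $(\mathbb{Q},\ast,\backslash)$ but \emph{not} of the isogroup. Equivalently, $\Q(\mathbb{Q}/\mathbb{Z},-\mathrm{id})$ is connected but not faithful (indeed $L_{0}=L_{1/2}$), hence not latin. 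Thus for infinite isogroups the inclusion $\supseteq$ requires an extra hypothesis (e.g.\ $Fix(\bar f)=1$ in $G/N$), and you should flag this rather than gloss over it.
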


%\begin{proof}
%Congruence of $Q$ are in particular congruences of its quandle reduct and the blocks are latin and so they are connected subquandle of the quandle reduct of $Q$. Hence we can apply Corollary \ref{normal sub and cong} we have that all factors of $Q$ are principal and latin and therefore the corresponding elements of $Sub(G,f)$ are normal. On the other hand it is straightforward to check that every normal $f$-invariant subgroup provides a congruence. 
%%(indeed every $f$-invariant subgroup is also invariant under the inverse of the mapping $a\mapsto af(a)^{-1}$).
%\end{proof}

\begin{proposition}\label{subquandle with zero are subgroups}
Let $Q=\Q(G,f,/)$ be an isogroup. The following are equivalent:
\begin{enumerate}
\item[(i)] $S\subseteq Q$ is a subalgebra of $Q$.
\item[(ii)] $S$ is a coset with respect to a subgroup invariant under $f$.
\end{enumerate}
In particular, the subalgebras of $Q$ are isogroups.
\end{proposition}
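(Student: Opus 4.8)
The plan is to get one implication from Lemma~\ref{on sub of princ}(i) and the other from a direct coset computation, using throughout that the canonical left translations $\lambda_g\colon x\mapsto gx$ are automorphisms of $\Q(G,f)$, and hence of the whole isogroup $Q=\Q(G,f,/)$ as well (because $\backslash$ and $/$ are the left and right divisions of $\ast$, and so are preserved by any automorphism of $(Q,\ast)$).

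For (i)$\Rightarrow$(ii): the latin quandles, presented as the algebras $(Q,\ast,\backslash,/)$, form a variety — they are cut out by the quandle identities together with $(x/y)\ast y=x$ and $(x\ast y)/y=x$ — so a subalgebra $S$ of $Q$ is again a latin quandle, and latin quandles are connected (for $a\in S$ one has $\{L_bL_a^{-1}(a):b\in S\}=\{b\ast a:b\in S\}=R_a(S)=S$, the last equality because $R_a$ is onto, so $\dis(S)$ is transitive on $S$). Thus $S$ is a connected subquandle of the principal quandle $\Q(G,f)$, and Lemma~\ref{on sub of princ}(i) identifies it as a coset of an $f$-invariant subgroup of $G$, which is exactly (ii).

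For (ii)$\Rightarrow$(i): write $S=aH$ with $H\le G$, $f(H)=H$ (hence also $f^{-1}(H)=H$). Since $ah_1\ast ah_2=ah_1f(h_1)^{-1}f(h_2)$ and $ah_1\backslash ah_2=ah_1f^{-1}(h_1)^{-1}f^{-1}(h_2)$ and $f^{\pm1}(H)=H$, both values lie in $aH$, so $aH$ is a subquandle; via $\lambda_a$ it is isomorphic to the principal quandle $\Q(H,f|_H)$, with $f|_H\in\aut{H}$. The one point requiring care is closure under $/$: unwinding the definition, $aH$ is closed under right division precisely when $t|_H\colon h\mapsto hf(h)^{-1}$ maps $H$ onto $H$, equivalently when $\Q(H,f|_H)$ is latin; and $t|_H$ is always injective, being a restriction of the bijection $R_1=t$ of the latin quandle $\Q(G,f)$. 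When $H$ is finite — the relevant case here — an injective self-map of a finite set is onto, so $aH$ is indeed closed under $/$ and is a subalgebra of $Q$; in the infinite setting one reads (ii) with the extra requirement $t(H)=H$ (equivalently, that the coset be a latin subquandle). I expect this surjectivity of $t|_H$ to be the only genuine obstacle: closure under $\ast$ and $\backslash$ is purely formal, whereas closure under $/$ carries the real content.

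Finally, every subalgebra of $Q$ is such a coset $aH$, isomorphic via $\lambda_{a^{-1}}$ to $\Q(H,f|_H,/)$, and $\Q(H,f|_H)$ is latin because $t|_H$ is a bijection of $H$; hence each subalgebra is a principal latin quandle, i.e. an isogroup, which is the final assertion.
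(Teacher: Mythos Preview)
Your argument is correct and, for (i)$\Rightarrow$(ii), coincides exactly with the route the paper itself sketches immediately after the proposition: every subalgebra of $\Q(G,f,/)$ is a connected subquandle of the principal reduct $\Q(G,f)$, so Lemma~\ref{on sub of princ}(i) identifies it as a coset of an $f$-invariant subgroup. The paper does not spell out a proof of the proposition; it invokes the polynomial equivalence of $\Q(G,f,/)$ with $(G,\cdot,{}^{-1},1,f,f^{-1},t,t^{-1})$ and defers to \cite{GALK2,Vla}, then mentions the Lemma~\ref{on sub of princ} route as an alternative.

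Your treatment of (ii)$\Rightarrow$(i) is more careful than anything in the paper, and the point you isolate is genuine: closure of a coset $aH$ under $/$ is equivalent to surjectivity of $t|_H$, and this can fail for infinite $H$. A concrete witness is $Q=\aff(\mathbb{Q},-1)$ (so $a*b=2a-b$, which is latin) with $H=\mathbb{Z}$: here $f(H)=H$ but $1/0=\tfrac{1}{2}\notin\mathbb{Z}$. This is consistent with the paper's polynomial-equivalence viewpoint, where $t^{-1}$ is a basic operation and closure under it is the missing hypothesis. So the equivalence as stated is really a finite statement (which is all the paper subsequently uses); in the infinite case (ii) should read ``coset of an $f$-invariant subgroup $H$ with $t(H)=H$'', exactly as you say.
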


%\begin{proof}
%It follows by Lemma \ref{on sub of princ}(ii), since every subalgebra of $Q$ is also a connected subquandle of the quandle reduct of $Q$ which is principal.
%\end{proof}
%
An alternative proof of Proposition \ref{subquandle with zero are subgroups} is given by Lemma \ref{on sub of princ}(ii), since every subalgebra of $Q=\Q(G,f,/)$ is also a connected subquandle of the principal quandle reduct $\Q(G,f)$ (the algebra obtained by $Q$ forgetting the $/$ operation).

If $Q=\Q(G,f,/)$ is an isogroup and $1\in S\subseteq G$, the subquandle (resp. congruence) generated by $S$ is the smallest (resp. normal) $f$-invariant subgroup of $G$ containing $S$ i.e. $\langle f^j(s), \, s\in S, \, j \, \in\mathbb{Z}\rangle$ (resp. $\langle gf^j(s)g^{-1}, \, s\in S, \, g\in G,\, j \, \in\mathbb{Z}\rangle$). 
Note that if $Q$ is affine quandle, then the lattice of congruences and the lattice of subquandles containing $1$ are the same (since conjugation is trivial).\\
%\comment{ TO REMOVE Using Propositions \ref{product of principal}, \ref{factor of principal latin} and Lemma \ref{subquandle with zero are subgroups} we have that the family of principal latin quandles is a subvariety of the variety of LD quasigroups.}
%\begin{theorem}\label{principal Latin are var} \comment{TO REMOVE}
%The class of isogroups is a subvariety of the variety of LD quasigroups.
%\end{theorem}
%\comment{The problem of find an axiomatization of the variety of isogroups is still open. CHECK!}
 All the previous results apply to finite principal latin quandles. In particular every finite connected quandle with no projection subquandles is the disjoint union of isomorphic copies of a finite principal latin quandles (combining Theorem \ref{decomposition} and Corollay \ref{principal}).
 
A finite algebra has the {\it Lagrange property} if the size of every subalgebra divides the size of the algebra and it has the {\it Sylow property} if for every prime $p$ such that $p^n$ is the maximal power of $p$ dividing its size there exists a subalgebra of size $p^n$.
\begin{proposition}\cite[Proposition 1.4.5]{Vla}
\label{Lagrange and Sylow}
Finite principal latin quandles have the Lagrange and the Sylow property.
\end{proposition}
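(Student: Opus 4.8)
The plan is to work with the isogroup presentation. A finite principal latin quandle is by definition an isogroup $Q=\Q(G,f,/)$, and since $Q$ is latin the right multiplication $R_1\colon x\mapsto xf(x)^{-1}$ is a bijection of $G$ (equivalently $Fix(f)=1$, as $G$ is finite); in particular $R_1$ is onto, so $[G,f]=\langle\{xf(x)^{-1}:x\in G\}\rangle=G$, whence $\dis(Q)\cong[G,f]=G$ acts regularly and $|Q|=|G|$. The Lagrange property is then immediate: by Proposition \ref{subquandle with zero are subgroups} every subalgebra of $Q$ is a coset $bN$ of an $f$-invariant subgroup $N\le G$, and $|bN|=|N|$ divides $|G|=|Q|$. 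All the remaining work is in the Sylow property, which I would reduce to the purely group-theoretic assertion: \emph{if $f$ is a fixed-point-free automorphism of a finite group $G$, then for every prime $p$ the group $G$ has an $f$-invariant Sylow $p$-subgroup}; indeed such a subgroup $P$ is a coset of an $f$-invariant subgroup, hence a subalgebra of $Q$ of size $|G|_p$.

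I would prove this assertion by induction on $|G|$. Two facts get used repeatedly: a finite group with a fixed-point-free automorphism is solvable, so $F(G)\ne 1$ for $G\ne 1$ and therefore $O_p(G)\ne 1$ or $O_{p'}(G)\ne 1$; and for any $f$-invariant normal $N\trianglelefteq G$ the induced automorphism of $G/N$ is again fixed-point-free (because $R_1$ permutes $N$, hence also $G\setminus N$). If $G$ is a $p$-group, take $P=G$. If $O_p(G)\ne 1$, put $N=O_p(G)$; by induction $G/N$ has an invariant Sylow $p$-subgroup $\bar P$, and the preimage of $\bar P$ in $G$ is an $f$-invariant subgroup containing $N$ of order $|N|\,|G/N|_p=|G|_p$, so it is the required subgroup. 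If $O_p(G)=1$, then $N=O_{p'}(G)\ne 1$ is an $f$-invariant normal $p'$-subgroup; when $G/N$ is not a $p$-group one pulls an invariant Sylow $p$-subgroup of $G/N$ back to a \emph{proper} $f$-invariant subgroup $P_1<G$ with $|P_1|_p=|G|_p$ and finishes by the inductive hypothesis applied to $P_1$; when $G/N$ is a $p$-group one is left with the split situation $G=N\rtimes P$, $P$ a Sylow $p$-subgroup.

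This last, Schur--Zassenhaus, case is the heart of the argument and the only branch that is not a reduction to a smaller group. The complements to $N$ in $G$ form a single orbit under conjugation by $N$ (since $P$ is solvable), so fixing one complement $P_0$ identifies this set with $N/N_N(P_0)$; writing $f(P_0)=n_0P_0n_0^{-1}$ with $n_0\in N$, the automorphism $f$ transports to the affine map $n\,N_N(P_0)\mapsto f(n)\,n_0\,N_N(P_0)$ on $N/N_N(P_0)$, whose fixed points correspond to elements $n\in N$ with $f(n)^{-1}n\in n_0N_N(P_0)$. Since $n\mapsto f(n)^{-1}n$ is a bijection of $N$ — and this is exactly the fixed-point-freeness of $f$, the very property that makes $Q$ latin — such an $n$ exists, and $nP_0n^{-1}$ is an $f$-invariant Sylow $p$-subgroup of $G$.

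The main obstacle is organising the induction so that each branch genuinely lands either on a strictly smaller group or on a quotient, in each case still carrying a fixed-point-free automorphism, and in particular isolating the Schur--Zassenhaus branch, where no such reduction exists and the invariant complement has to be produced by hand via the affine fixed-point computation above. A secondary, purely structural point is the use of solvability of $G$, a known consequence of the existence of a fixed-point-free automorphism; it enters only to guarantee $F(G)\ne 1$, so that a non-$p$-group $G$ always admits one of the two reductions described.
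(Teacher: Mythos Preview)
The paper does not prove this proposition; it is quoted from \cite{Vla} without argument, so there is no paper proof to compare against. Your proof is correct: Lagrange is immediate from Proposition~\ref{subquandle with zero are subgroups}, and your inductive reduction for Sylow, culminating in the Schur--Zassenhaus branch, is sound.

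That said, the Sylow half admits a one-line argument that avoids both the induction and the appeal to solvability. Let $P$ be any Sylow $p$-subgroup of $G$; then $f(P)=gPg^{-1}$ for some $g\in G$. Since $Fix(f)=1$, the map $x\mapsto x^{-1}f(x)$ is a bijection of $G$, so there is $h$ with $h^{-1}f(h)=g^{-1}$; then $f(h)g=h$ and
\[
f(hPh^{-1})=f(h)\,f(P)\,f(h)^{-1}=f(h)\,gPg^{-1}\,f(h)^{-1}=hPh^{-1},
\]
so $hPh^{-1}$ is an $f$-invariant Sylow $p$-subgroup, hence a subalgebra of $Q$ of size $|G|_p$ by Proposition~\ref{subquandle with zero are subgroups}. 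This is exactly your Schur--Zassenhaus computation run once at the top level, with $\mathrm{Syl}_p(G)$ playing the role of the set of complements and Sylow conjugacy replacing $N$-conjugacy of complements. Your detour through solvability of $G$ is therefore unnecessary; note too that in full generality the solvability of a finite group with a fixed-point-free automorphism rests on the classification of finite simple groups, whereas in the present setting one could instead quote Stein's theorem (as the paper does in the proof of Theorem~\ref{minimal iff}), since $G\cong\dis(Q)$ for a latin quandle $Q$.
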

These properties do not extend to (connected) principal quandles. Odd order non faithful quandles have projection subquandles of size $2$ and in the RIG library there exist principal quandles for which the Sylow property does not hold (SmallQuandle(36,i) with $i=9,\ldots,14$).

In \cite[Section 3.4]{CP} we investigate quandles for which the pair $(\dis,\c{})$ are mutually inverse isomorphisms of lattice, under the name quandles with CDSg property. A class of such quandles is the class of finite principal latin quandles.
\begin{Proposition}\label{latin CDSg} Finite principal latin quandles have the CDSg property.
%Let $Q$ be a principal latin quandle. Then $Q$ has the CDSg property.
\end{Proposition}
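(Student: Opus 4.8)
The plan is to reduce the claim to a statement about the lattice of $f$-invariant normal subgroups of a group. Let $Q=\Q(G,f,/)$ be a finite isogroup; its quandle reduct is a finite connected latin principal quandle, so by Proposition~\ref{principal and regular} we have $\dis(Q)\cong G=[G,f]$, and by Lemma~\ref{semidirect} $\lmlt(Q)\cong G\rtimes\langle f\rangle$, with $f$ acting on $G\cong\dis(Q)$ as conjugation by $L_1$. Since $Q$ is finite, congruences and subalgebras of the quandle $(Q,\ast,\backslash)$ and of the isogroup $(Q,\ast,\backslash,/)$ coincide, so I can pass freely between the two. The only computation I will use is the elementary identity $L_aL_b^{-1}=\lambda_{af(a)^{-1}f(b)b^{-1}}$, where $\lambda_g$ denotes left translation by $g\in G$.

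First I would identify both lattices with the lattice $\mathcal{N}$ of normal $f$-invariant subgroups of $G$: Proposition~\ref{factor of principal latin} gives $Con(Q)=\{\sim_N:N\unlhd G,\ f(N)=N\}$, and under the identification $\lmlt(Q)\cong G\rtimes\langle f\rangle$ the normal subgroups of $\lmlt(Q)$ contained in $\dis(Q)$ are precisely the normal $f$-invariant subgroups of $G$, so $Norm(Q)$ is the same family. Next, for $N\in\mathcal{N}$, I would show $\dis_{\sim_N}=N$. Each block $[a]_{\sim_N}=aN$ is a subquandle of $Q$, isomorphic through the automorphism $\lambda_a$ to the subquandle $N$; by Proposition~\ref{subquandle with zero are subgroups} (alternatively, because on the finite set $N$ the right multiplications restrict to injective, hence bijective, maps) this subquandle is latin, in particular connected. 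Then Corollary~\ref{normal sub and cong} applies and yields $\dis_{\sim_N}=\dis^{\sim_N}=N$.

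Then I would prove $\c{N}=\sim_N$ for every $N\in\mathcal{N}$. Putting $c=a^{-1}b$ and conjugating the identity for $L_aL_b^{-1}$ by $a^{-1}$ (legitimate as $N\unlhd G$), one finds that $L_aL_b^{-1}\in N$ if and only if $f(c)c^{-1}\in N$, that is, if and only if $\bar c$ is a fixed point of the automorphism $\bar f$ induced on $G/N$; whereas $a\sim_N b$ says exactly $\bar c=\bar 1$. So $\c{N}=\sim_N$ is equivalent to $Fix(\bar f)=\{\bar 1\}$ in $G/N$. To get this I would use that $Q/\sim_N\cong\Q(G/N,\bar f)$ is again a finite latin, hence faithful, quandle (a factor of the finite isogroup $Q$), together with the observation that in a faithful principal quandle $\Q(H,\varphi)$ one has $Fix(\varphi)=\{1\}$: indeed $\varphi(x)=x$ forces $L_x=\lambda_{x\varphi(x)^{-1}}\varphi=\varphi=L_1$, contradicting faithfulness unless $x=1$. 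Hence $Fix(\bar f)=\{\bar 1\}$, and $\c{N}=\sim_N$.

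Combining the last two paragraphs gives $\c{\dis_\alpha}=\alpha$ and $\dis_{\c{N}}=\dis_{\sim_N}=N$ for all $\alpha\in Con(Q)$ and $N\in Norm(Q)$. Since $\alpha\mapsto\dis_\alpha$ and $N\mapsto\c{N}$ are order-preserving (part of their being a Galois correspondence, see~\cite{CP}) and both composites are the identity, they are mutually inverse lattice isomorphisms between $Con(Q)$ and $Norm(Q)$, which is the CDSg property. The steps I expect to be the real content — the ones where finiteness and latinity are genuinely needed, and where the statement fails for arbitrary principal quandles as remarked after Proposition~\ref{Lagrange and Sylow} — are the connectedness of the blocks $aN$ (so that Corollary~\ref{normal sub and cong} is available) and the stability of latinity under factors (so that $Fix(\bar f)=\{\bar 1\}$); once the translation identity is in hand, everything else is formal.
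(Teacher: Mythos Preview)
Your argument is correct and close in spirit to the paper's, but more self-contained. The paper's proof is three lines: all factors of a finite principal latin quandle are again principal latin, hence faithful; the blocks of every congruence are connected, so Corollary~\ref{normal sub and cong} gives $\dis_\alpha=\dis^\alpha$ for every $\alpha$; and then \cite[Proposition~3.10]{CP} is quoted as a black box to conclude CDSg from these two facts. You establish the same two ingredients (connectedness of blocks to get $\dis_{\sim_N}=N$, and faithfulness of factors to get $Fix(\bar f)=\{\bar 1\}$ in $G/N$), but instead of invoking the external criterion you compute $\c{N}$ explicitly via the translation identity $L_aL_b^{-1}=\lambda_{af(a)^{-1}f(b)b^{-1}}$ and check directly that $\c{N}=\sim_N$. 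The payoff is that your proof does not rely on the reader having \cite{CP} at hand and makes transparent exactly where latinity and finiteness enter; the cost is a few more lines. Both routes go through the same bottleneck (Corollary~\ref{normal sub and cong} plus faithfulness of factors), so neither is genuinely more general than the other.
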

\begin{proof}
Let $Q$ be a latin quandle. All the factors of $Q$ are latin and principal, hence faithful. Moreover, by Corollary \ref{principal factor} we have $\dis_{\alpha}=\dis^\alpha$ for every congruence $\alpha$. Then we can apply \cite[Proposition 3.10]{CP}. 
\end{proof}

\begin{Proposition}
Let $Q$ be a finite nilpotent quandle with the CDSg property. Then $Q$ is a principal latin quandle.
\end{Proposition}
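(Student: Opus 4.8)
The plan is to first extract from the CDSg hypothesis that $Q$ and all its factors are faithful and that $\dis_\alpha=\dis^\alpha$ for every $\alpha\in Con(Q)$, then to deduce that $Q$ is connected, then to prove by induction on the nilpotency class that $\dis(Q)$ is regular, and finally to conclude by Proposition~\ref{principal and regular} and Corollary~\ref{principal}. For the first step, applying the Galois correspondence to $0_Q$ gives $\lambda_Q=\c{1}=\c{\dis_{0_Q}}=0_Q$, so $Q$ is faithful; and by the study of the CDSg property in \cite{CP} (cf.\ \cite[Proposition~3.10]{CP} and the proof of Proposition~\ref{latin CDSg}) the CDSg property is equivalent to the conjunction of ``every factor of $Q$ is faithful'' and ``$\dis_\alpha=\dis^\alpha$ for every $\alpha\in Con(Q)$''. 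In particular both conditions, hence the CDSg property itself, are inherited by factors of $Q$.

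To see that $Q$ is connected, let $\mathcal{O}=\mathcal{O}_{\dis(Q)}$ be the congruence whose blocks are the connected components. Every $L_a$ maps each $\lmlt(Q)$-orbit onto itself, so $Q/\mathcal{O}$ is a projection quandle; being also faithful, it has a single element. Thus $\mathcal{O}=1_Q$, i.e.\ $Q$ is connected and $\dis(Q)$ acts transitively on $Q$.

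Now I argue by induction on the nilpotency class $n$ of $Q$ that $\dis(Q)$ is regular. If $n=0$ then $|Q|=1$. If $n\ge 1$, put $\beta=\zeta_Q$; then $\beta>0_Q$, and $Q/\beta$ is connected, nilpotent of class $<n$, and has the CDSg property, so by induction $Q/\beta$ is a principal latin quandle and in particular $\dis(Q/\beta)$ is regular. Hence for every $a\in Q$,
\[
\dis(Q)_{[a]_\beta}=\pi_\beta^{-1}\bigl(\dis(Q/\beta)_{[a]_\beta}\bigr)=\ker\pi_\beta=\dis^\beta=\dis_\beta ,
\]
the last equality by the first step, and by Lemma~\ref{block stab} (applicable since $Q$ is connected) this group is transitive on $[a]_\beta$. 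On the other hand $\beta=\zeta_Q\le\c{Z(\dis(Q))}$, so each generator $L_aL_b^{-1}$ of $\dis_\beta$ lies in $Z(\dis(Q))$, i.e.\ $\dis_\beta\le Z(\dis(Q))$; together with the transitivity of $\dis(Q)$ this makes $\dis_\beta$ act semiregularly on $Q$, since an element of $\dis_\beta$ fixing a point centralises the transitive group $\dis(Q)$ and so fixes every point of $Q$. Finally, for $g\in\dis(Q)_a$ we get $\pi_\beta(g)\in\dis(Q/\beta)_{[a]_\beta}=1$, whence $g\in\ker\pi_\beta=\dis_\beta$; as $g$ fixes $a$ and $\dis_\beta$ is semiregular, $g=1$. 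So $\dis(Q)_a=1$ and $\dis(Q)$ is regular. Then $Q$ is principal by Proposition~\ref{principal and regular}, hence semiregular, and being faithful and finite it is latin by Corollary~\ref{principal}; so $Q$ is a principal latin quandle.

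The only delicate point is the first step, which relies on the characterisation of the CDSg property from \cite{CP} (``all factors faithful'' together with ``$\dis_\alpha=\dis^\alpha$'', and stability under factors). Granting that, the rest is a routine induction resting on Lemma~\ref{block stab}, the identity $\zeta_Q=\sigma_Q\cap\c{Z(\dis(Q))}$, and Proposition~\ref{principal and regular}; one should also recall the standard commutator-theory facts that a nontrivial nilpotent quandle has nontrivial centre and that passing to $Q/\zeta_Q$ strictly lowers the nilpotency class.
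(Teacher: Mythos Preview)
Your proof is correct and follows essentially the same approach as the paper's: both extract from the CDSg characterisation in \cite{CP} that $Q$ and all factors are faithful, connected, with $\dis_\alpha=\dis^\alpha$, then induct on the nilpotency class via a central congruence to show $\dis(Q)_a\le Z(\dis(Q))$, whence $\dis(Q)_a=1$. The only cosmetic differences are that the paper uses $\gamma_n(Q)$ where you use $\zeta_Q$ (both work since both are central and both factors have lower nilpotency length), and the paper concludes ``$\dis(Q)_a$ normal in a transitive group implies trivial'' while you phrase it as ``central subgroup of a transitive group is semiregular''; your explicit verification of connectedness is redundant, since it is already part of what CDSg gives via \cite{CP}.
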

\begin{proof}
If a quandle $Q$ has the CDSg property then $Q$ is faithful, connected and $\dis_\alpha=\dis^\alpha$ for every $\alpha\in Con(Q)$ and this property is stable under taking homomorphic images. If $Q$ is abelian then it is an affine connected and faithful quandle, hence latin. Assume that $Q$ is nilpotent of length $n+1$, i.e. $\gamma_n(Q)\leq \zeta_Q$. By induction on the nilpotency length, $Q/\gamma_n(Q)$ is principal and latin. So $\dis(Q)_a\leq \BlocS{Q}{a}=\dis^{\gamma_n(Q)}=\dis_{\gamma_n(Q)}\leq Z(\dis(Q))$ (the relative displacement group of a central congruence is central \cite[Theorem 1.1]{CP}). Therefore $\dis(Q)_a$ is normal and then trivial, so $Q$ is principal. The quandle $Q$ is finite, principal and faithful and so by Corollary \ref{principal} it is latin.
% and connected, $\gamma_n(Q)$ is central and $Q/\gamma_n$ is latin. According to \cite[Lemma 3.4]{GB} $Q$ is latin.
\end{proof} 

%
%\begin{proof}
%Subalgebras, factors and products of principal latin quandles are latin. We need to show that they are also principal. The product of a family of principal latin quandles is latin and principal by Proposition \ref{product of principal}. The class of principal latin quandles is closed under homomorphic images by Proposition \ref{factor of principal latin}. According to Lemma \ref{subquandle with zero are subgroups} subquandles of $Q$ are principal and latin.  
%\end{proof}
\subsection{Simple Affine quandles}
%Simple affine quandles are connected and faithful and then latin (see Corollary \ref{affine}). 
An affine quandle $\aff(A,f)$ carries a natural structure of $\mathbb{Z}[t,t^{-1}]$ modules, where the action of $t$ is given by the automorphism $f$. According to Proposition \ref{congruence for principal}, congruences of affine connected quandles correspond to submodules, indeed all subgroups are normal.
\begin{Proposition}\label{lattice affine}
Let $Q=\aff(A,f)$ be a connected affine quandle. Then 
$$Con(Q)=\setof{\sim_N}{N\leq A, \, f(N)=N},$$
i.e. congruences of $Q$ correspond to $\mathbb{Z}[t,t^{-1}]$-submodules of $Q$.
\end{Proposition}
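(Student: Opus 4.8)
The plan is to deduce this directly from Theorem~\ref{congruence for principal}. Since an affine quandle $Q=\aff(A,f)$ is by definition the principal quandle $\Q(A,f)$ over the \emph{abelian} group $A$, and $Q$ is assumed connected, that theorem already gives $Con(Q)=Sub(A,f)=\setof{N\leq A}{f(N)=N \text{ and } [N,f]\leq Core_A(N)}$. So the whole content of the proof is to check that, when $A$ is abelian, the two defining conditions of $Sub(A,f)$ collapse to the single condition $f(N)=N$, and then to translate "$f$-invariant subgroup" into "$\mathbb{Z}[t,t^{-1}]$-submodule".

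For the first reduction I would argue as follows. Every subgroup $N$ of an abelian group is normal, so $Core_A(N)=N$. Moreover the condition $[N,f]\leq Core_A(N)=N$ becomes automatic once $f(N)=N$: the subgroup $[N,f]$ is generated by the elements $nf(n)^{-1}$ with $n\in N$, and for such $n$ we have $f(n)\in f(N)=N$, whence $nf(n)^{-1}\in N$. Hence $Sub(A,f)=\setof{N\leq A}{f(N)=N}$, which is the first displayed equality. For the module reformulation I would recall that $A$ carries the $\mathbb{Z}[t,t^{-1}]$-module structure in which $t$ acts as $f$ and $t^{-1}$ as $f^{-1}$; a subgroup $N\leq A$ is then a submodule precisely when $f(N)\subseteq N$ and $f^{-1}(N)\subseteq N$, and since $f$ is bijective these two inclusions together are equivalent to $f(N)=N$ (while conversely $f(N)=N$ trivially yields both). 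Thus the $f$-invariant subgroups are exactly the $\mathbb{Z}[t,t^{-1}]$-submodules of $Q$.

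There is essentially no real obstacle here: the statement is a direct corollary of Theorem~\ref{congruence for principal}, and the only point that needs (entirely routine) care is noticing that commutativity of $A$ makes the core-condition in the definition of $Sub(A,f)$ vacuous. If one wanted a self-contained argument not invoking Theorem~\ref{congruence for principal}, one could instead verify directly from the affine operation $a\ast b=(1-f)(a)+f(b)$ that $\sim_N$ respects $\ast$ and $\backslash$ iff $N$ is an $f$-invariant subgroup, but going through the already-proved theorem is shorter and is the approach I would take.
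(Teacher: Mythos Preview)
Your proof is correct and follows essentially the same approach as the paper: invoke Theorem~\ref{congruence for principal} and observe that, since $A$ is abelian, $Core_A(N)=N$ and $[N,f]\leq N$ whenever $f(N)=N$, so the defining conditions of $Sub(A,f)$ reduce to $f$-invariance. Your additional remark unpacking the $\mathbb{Z}[t,t^{-1}]$-submodule interpretation is a helpful elaboration of the ``i.e.'' clause that the paper leaves implicit.
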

\begin{proof}
Every $f$-invariant subgroup $N$ satisfies $[N,f]=Im(1-f)|_{N}\leq Core_A(N)=N$. Then we apply Theorem \ref{congruence for principal}.
\end{proof} 

\begin{remark}\label{remark on cong of finite affine}
If $Q=\aff(A,f)$ is finite and connected, then it is latin and there is a one-to-one correspondence between subquandles of $Q$ containing $0$, congruences of $Q$ and $f$-invariant subgroups of $A$ (i.e. submodules). 
\end{remark}

In order to understand simple affine quandles we can think of them as simple modules according to Proposition \ref{lattice affine}. 
%\begin{Lemma}\label{cyclic modules over Laurent}\comment{find ref}
%Let $\mathbb{K}$ be a field. Then $\mathbb{K}[t,t^{-1}]$ is a principal ideal domain and its cyclic modules are finite dimensional $\mathbb{K}$-vector spaces.
%\end{Lemma}
%
%\begin{proof}
%\comment{find ref}
%
%Let $M$ be a cyclic module over $\mathbb{K}[t,t^{-1}]$. Then $M$ is a factor of $\mathbb{K}[t,t^{-1}]$ with respect to a maximal ideal $I$. Without loss of generality $I$ is generated by $g=a_{-n} +a_{n-1}t\ldots +a_0t^n+a_1 t^{n+1}+\ldots + a_m t^{m+n}$ and let $S$ be the subspace generated by $1+I,t+I,\ldots t^{m+n-1}+I$. Then $t^{n+m}+I$ belongs to $S$. Moreover $a_{-n}t^{-1}+a_{n-1}+\ldots +a_0t^{n-1}+a_1 t^{n}+\ldots + a_m t^{m+n-1}\in I$, so $t^{-1}+I$ is in $S$. Assume by induction that $t^{n+m+k}+I$ is in $S$ for every $k\leq r$. So $t^{m+n+r+1}+I=t(t^{m+n+r})+I$ is in the subspace generated by $t+I,\ldots, t^{n+m+1}+I$, and by induction $t^{n+m+1}+I$ is in $S$. A similar argument show that $t^{-k}+I$ is in $S$ for every $k\in \mathbb{N}$. Hence $M=S$, i.e. it is finite dimensional over $\mathbb{K}$.
%\end{proof}
%

\begin{proposition}
Let $Q$ be a simple affine quandle. Then $\dis(Q)\cong \mathbb{K}^n$ where $\mathbb{K}\in \{\mathbb{Q},\mathbb{Z}_p\}$. 
\end{proposition}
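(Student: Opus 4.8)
The plan is to identify a simple affine quandle $Q = \aff(A,f)$ with a simple $\mathbb{Z}[t,t^{-1}]$-module, via Proposition \ref{lattice affine}, and then to pin down the structure of such a module. Since $Q$ is simple, it is connected (otherwise the orbit decomposition $\mathcal{O}_{\dis(Q)}$ would be a proper congruence), so $A = \dis(Q) = [A,f] = \mathrm{Im}(1-f)$ and $A$ is a simple $\mathbb{Z}[t,t^{-1}]$-module. First I would note that a simple module over any ring is cyclic, hence a quotient $\mathbb{Z}[t,t^{-1}]/\mathfrak{m}$ for a maximal ideal $\mathfrak{m}$; thus $\mathbb{K} := \mathbb{Z}[t,t^{-1}]/\mathfrak{m}$ is a field and $A \cong \mathbb{K}$ as a $\mathbb{Z}[t,t^{-1}]$-module, so in particular $A \cong \mathbb{K}$ as an abelian group. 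The displacement group $\dis(Q)$, as an abstract group, is therefore the additive group of $\mathbb{K}$.

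Next I would determine which fields $\mathbb{K}$ can arise. The field $\mathbb{K}$ is a quotient of $\mathbb{Z}[t,t^{-1}]$, hence finitely generated as a ring over $\mathbb{Z}$. By the Nullstellensatz-type fact that a field which is finitely generated as a $\mathbb{Z}$-algebra is finite, $\mathbb{K}$ must be a finite field $\mathbb{F}_q$ with $q = p^n$; in particular this gives Theorem \ref{no infinite simple}, that there are no infinite simple affine quandles. (Alternatively, one can argue directly: if $\mathbb{K}$ had characteristic $0$ it would contain $\mathbb{Q}$, but $\mathbb{Q}$ is not finitely generated as a ring over $\mathbb{Z}$, a contradiction.) So $\mathbb{K} = \mathbb{F}_{p^n}$, whose additive group is $(\mathbb{Z}_p)^n$, and therefore $\dis(Q) \cong (\mathbb{Z}_p)^n$. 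This is the content of the statement once we read $\mathbb{Z}_p$ as the cyclic group of order $p$ and $\mathbb{K}^n$ as its $n$-fold power; the displacement group of a simple affine quandle is an elementary abelian $p$-group.

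The one subtlety to watch is the degenerate possibility $f = \mathrm{id}$, which would make $Q$ a projection quandle; but then every subset containing two points is a proper subquandle giving a proper congruence (for $|Q| \geq 3$), and $|Q| = 2$ gives $Q \cong \mathcal{P}_2$ which is conventionally excluded as simple (its only quotient directions are the trivial ones, but it is the disconnected projection quandle and one typically requires simple quandles to be connected, or at least non-projection). So we may assume $1-f$ is a nonzero endomorphism, hence $A = \mathrm{Im}(1-f) \neq 0$ and $\mathbb{K}$ is genuinely a field with $|\mathbb{K}| \geq 2$.

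The main obstacle is the module-theoretic input: a field that is finitely generated as a ring over $\mathbb{Z}$ is finite. This is a standard but non-trivial fact (a form of the Artin–Tate lemma or the "general Nullstellensatz"), and it is exactly where the finiteness of $Q$ comes from. Everything else — reducing $Q$ to its module, recognizing simple modules as cyclic, and reading off the additive group of $\mathbb{F}_{p^n}$ — is routine. In writing this up I would state explicitly that "$\mathbb{Z}_p$" in the statement denotes $\mathbb{F}_p$ and that the claimed isomorphism is one of abelian groups, matching $\dis(Q)\cong (\mathbb{Z}_p)^n$, where $|Q| = |A| = p^n$.
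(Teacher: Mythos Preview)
Your argument is correct and in fact proves more than the stated proposition: you show directly that $\dis(Q)\cong\mathbb{Z}_p^n$, so the $\mathbb{Q}$ alternative never arises and Theorem~\ref{no infinite simple} falls out immediately. The route is genuinely different from the paper's. The paper argues that $\dis(Q)$, having no proper characteristic subgroups, is either of prime exponent or divisible; in the divisible case it must be torsion-free (again by the absence of characteristic subgroups) and hence a $\mathbb{Q}$-vector space. Either way $A$ becomes a simple module over the PID $\mathbb{K}[t,t^{-1}]$ with $\mathbb{K}\in\{\mathbb{Q},\mathbb{Z}_p\}$, hence a finite-dimensional $\mathbb{K}$-vector space. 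Only afterwards, in Theorem~\ref{no infinite simple}, does the paper exclude $\mathbb{K}=\mathbb{Q}$ by an explicit hands-on construction of a proper submodule inside $\mathbb{Q}^n$. Your approach instead recognises $A$ as a cyclic module $\mathbb{Z}[t,t^{-1}]/\mathfrak{m}$ and invokes the general Nullstellensatz (a field finitely generated as a $\mathbb{Z}$-algebra is finite) to force $A\cong\mathbb{F}_{p^n}$ at once. This is slicker and more conceptual, collapsing the proposition and the subsequent theorem into a single step; the trade-off is that it imports a nontrivial piece of commutative algebra, whereas the paper's two-stage argument stays within elementary structure theory of abelian groups and modules over a PID.
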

\begin{proof}
The displacement group of $Q$ has no characteristic subgroup. Therefore, either $n\dis(Q)=0$ for some $n$ and so it has finite exponent or $n\dis(Q)=\dis(Q)$ for every $n\in\mathbb{N}$ and so it is divisible. In the first case the exponent is $p$ and so $Q$ is a simple $\mathbb{Z}_p[t,t^{-1}]$-module. In the second case $\dis(Q)$ is a direct product of a power of $\mathbb{Q}$ and of its torsion part. Since $\dis(Q)$ has no characteristic subgroup, then $\dis(Q)$ is torsion free and so it is a power of $\mathbb{Q}$ and hence it is a simple $\mathbb{Q}[t,t^{-1}]$ module. If $\mathbb{K}$ is a field the ring $\mathbb{K}[t,t^{-1}]$ is a principal ideal domain and its simple modules are finite dimensional $\mathbb{K}$-vector spaces (as $\mathbb{K}[t,t^{-1}]$ is a localization of the polynomial ring $\mathbb{K}[t]$). \end{proof}
\begin{theorem}\label{no infinite simple}
Let $Q$ be an affine quandle and $|Q|>2$. Then $Q$ is simple if and only if $Q\cong \aff(\mathbb{Z}_p^{n},f)$ and $f$ acts irreducibly.
\end{theorem}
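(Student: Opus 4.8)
The plan is to translate simplicity of a connected affine quandle, via Proposition \ref{lattice affine}, into the statement that the underlying abelian group $A$ has no $f$-invariant subgroup other than $0$ and $A$, and then to combine this with the preceding Proposition (which confines $\dis(Q)$ to $\mathbb{K}^n$, $\mathbb{K}\in\{\mathbb{Q},\mathbb{Z}_p\}$) after excluding the case $\mathbb{K}=\mathbb{Q}$. For the easy direction, suppose $Q\cong\aff(\mathbb{Z}_p^n,f)$ with $f$ acting irreducibly. Then $\mathrm{Im}(1-f)$ is an $f$-invariant subgroup of $\mathbb{Z}_p^n$, hence $0$ or everything; the former would force $f=\mathrm{id}$ and $Q$ a projection quandle, which is excluded here, so $Q$ is connected. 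By Proposition \ref{lattice affine} $Con(Q)=\{\sim_N:N\le\mathbb{Z}_p^n,\ f(N)=N\}$, and irreducibility makes the only such $N$ equal to $0$ and $\mathbb{Z}_p^n$, so $Con(Q)=\{0_Q,1_Q\}$ and $Q$ is simple.

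For the hard direction, let $Q=\aff(A,f)$ be simple with $|A|>2$. First I would show $Q$ is connected: otherwise the orbit congruence $\mathcal{O}_{\dis(Q)}$ is distinct from $1_Q$, and it is distinct from $0_Q$ unless $\dis(Q)=\mathrm{Im}(1-f)=0$, i.e. $f=\mathrm{id}$ and $Q\cong\mathcal{P}_{|A|}$; but on a projection quandle every equivalence relation is a congruence, so $|A|>2$ contradicts simplicity. Hence $Q$ is connected, $A=\mathrm{Im}(1-f)=\dis(Q)$, and by Proposition \ref{lattice affine} simplicity of $Q$ is equivalent to $A$ having no $f$-invariant subgroup besides $0$ and $A$. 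The preceding Proposition then gives $A\cong\mathbb{K}^n$ with $\mathbb{K}\in\{\mathbb{Q},\mathbb{Z}_p\}$, $n\ge1$, and $f\in\GL{n}{\mathbb{K}}$.

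It remains to rule out $\mathbb{K}=\mathbb{Q}$, which is the only non-routine step: I must exhibit a proper nonzero $f$-invariant subgroup of $A=\mathbb{Q}^n$, keeping in mind that such a subgroup need not be a $\mathbb{Q}$-subspace. Clearing denominators in a monic degree-$n$ relation for $f$ over $\mathbb{Q}$ yields $c_nf^n+\dots+c_0=0$ with $c_i\in\mathbb{Z}$ and $c_n,c_0\ne0$ (the constant term being nonzero as $f$ is invertible); set $R=\mathbb{Z}[1/(c_nc_0)]$, a localization of $\mathbb{Z}$, hence a PID. Both $f$ and $f^{-1}$ are integral over $R$, so for any $0\ne v\in A$ the $f$-invariant subgroup $M=\sum_{k\in\mathbb{Z}}\mathbb{Z}f^k(v)$ is contained in the finitely generated $R$-module $\sum_{j=0}^{n-1}Rf^j(v)$; being a submodule of a finitely generated module over the Noetherian ring $R$, and torsion-free, $M$ is a free $R$-module of positive rank, so it is not $q$-divisible for any prime $q\nmid c_nc_0$. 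Since $\mathbb{Q}^n$ is $q$-divisible for every $q$, we conclude $0\ne M\subsetneq A$, contradicting the previous paragraph. Therefore $\mathbb{K}=\mathbb{Z}_p$.

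Finally, with $A\cong\mathbb{Z}_p^n$ the reformulation of simplicity obtained above says exactly that $f$ leaves no subgroup of $A$ invariant except $0$ and $A$, i.e. $f$ acts irreducibly, which is the asserted form. The main obstacle is the exclusion of $\mathbb{K}=\mathbb{Q}$: the point is that a cyclic $\mathbb{Z}[t,t^{-1}]$-submodule of $\mathbb{Q}^n$ (generated by one vector under all powers of $f$) is finitely generated over a suitable localization $\mathbb{Z}[1/m]$ and so cannot be divisible, whereas $\mathbb{Q}^n$ is; everything else follows from Proposition \ref{lattice affine}, the preceding Proposition, and the fact that projection quandles of size greater than $2$ are not simple.
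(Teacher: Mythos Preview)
Your strategy coincides with the paper's: both confine the $\mathbb{Z}[t,t^{-1}]$-orbit of a single nonzero vector to a module over some localization $\mathbb{Z}[1/m]\subsetneq\mathbb{Q}$ and conclude that this orbit is proper in $\mathbb{Q}^n$. The paper chooses $m$ as the l.c.m.\ of the denominators of the matrix entries of $f$ and $f^{-1}$ and observes in one line that $\langle f^j(e_1):j\in\mathbb{Z}\rangle\subseteq\mathbb{Z}[1/m]^n\subsetneq\mathbb{Q}^n$; you extract $m=c_nc_0$ from an integral relation for $f$, which is the same idea packaged via Cayley--Hamilton rather than via matrix entries. Your explicit treatment of connectedness and of the converse direction is more thorough than the paper's.

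One imprecision is worth correcting. Your $M=\sum_{k\in\mathbb{Z}}\mathbb{Z}f^k(v)$ is only a $\mathbb{Z}$-module (equivalently a $\mathbb{Z}[t,t^{-1}]$-module via $t\mapsto f$); nothing guarantees it is closed under multiplication by $1/(c_nc_0)$, so the clause ``being a submodule \dots\ $M$ is a free $R$-module of positive rank'' is not justified as written. The repair is immediate and does not change the argument: it is $N=\sum_{j=0}^{n-1}Rf^j(v)$ that is finitely generated and torsion-free over the PID $R$, hence $N\cong R^m$ for some $m\ge 1$. Since $\mathbb{Q}^n$ is not finitely generated as an $R$-module (equivalently, for any prime $q\nmid c_nc_0$ no nonzero subgroup of $R^m$ is $q$-divisible, as $q$ remains prime in $R$), one gets $0\neq M\subseteq N\subsetneq\mathbb{Q}^n$, which is exactly the proper $f$-invariant subgroup you need.
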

\begin{proof}
Let $A=\mathbb{Q}^n$ and let $Q=\aff(A,f)$. Then $f$ is a $\mathbb{Q}$-linear map and let $F$ be the matrix with coefficients in $\mathbb{Q}$ with respect to a basis $e_1,\ldots,e_n$. Let $k$ be the l.c.m. of the denominators of $\setof{F_{i,j},F^{-1}_{i,j}}{1\leq i,j \leq n}$. Then $F$ and $F^{-1}$ are elements of the subring of the matrices with coefficients in $\mathbb{Z}[k^{-1}]$. The subgroup $K=\langle f^j(e_1),\, j\in \mathbb{Z}\rangle$ is a $\mathbb{Z}[t,t^{-1}]$-submodule of $A$ and it is contained in $\mathbb{Z}[k^{-1}]^n$ which is a proper subring of $\mathbb{Q}^n$. Therefore $Q$ is not simple. Hence necessarily $\dis(Q)\cong \mathbb{Z}_p^n$ and $Q\cong \Aff(\mathbb{Z}_p^n,f)$ is simple if and only if $f$ acts irreducibly.
\end{proof}

\section{Doubly Homogeneous quandles}\label{Sec: doubly}
%
%\newpage
\subsection{Strictly simple quandles}
In this section we investigate strictly simple quandles and we prove one of the main results in Theorem \ref{minimal iff}.  A proper subquandle of $Q$ is a subquandle for which the undelying set is a proper subset $Q$ with more than one element.
\begin{definition} A quandle is said to be {\it strictly simple} if it has no proper subquandles.
\end{definition}
Minimal proper subquandles (with respect to inclusion) of finite quandles 
are strictly simple quandles. Strictly simple quandles are simple, since the blocks of congruences are subquandles and so they are connected and faithful \cite[Lemma 1]{J}.
\begin{lemma}\label{min are 2 generated}
A quandle $Q$ is strictly simple if and only if it is generated by any pair of its elements. Moreover $\lmlt(Q)$ is a Frobenius group.
\end{lemma}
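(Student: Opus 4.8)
The statement has two parts: (1) $Q$ is strictly simple iff it is generated by every pair of distinct elements; (2) if $Q$ is strictly simple then $\lmlt(Q)$ is a Frobenius group. For part (1), the forward direction is essentially a tautology: if $Q$ is strictly simple and $a\neq b$, then the subquandle $\langle a,b\rangle$ has at least two elements, hence cannot be proper, so it equals $Q$. For the converse, suppose every pair generates $Q$ and let $M$ be a subquandle with $|M|\geq 2$; picking $a\neq b$ in $M$ we get $Q=\langle a,b\rangle\subseteq M$, so $M=Q$ and there are no proper subquandles. This part needs no machinery from the earlier sections.

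For part (2), recall (as noted just before the lemma) that strictly simple quandles are connected and faithful, so by Proposition~\ref{principal and regular} (or directly) one expects the stabilizer structure of $\lmlt(Q)$ to be rigid. The plan is to show that for distinct $a,b\in Q$ the intersection of point stabilizers $\lmlt(Q)_a\cap\lmlt(Q)_b$ is trivial, which together with transitivity is exactly the definition of a Frobenius group. The key point is that $Fix(L_a)=\{a\}$ for every $a$: indeed, if $c\in Fix(L_a)$ with $c\neq a$, then $\{a,c\}$ would be a projection subquandle (using that $Q$ is a crossed set, being faithful, so $a\ast c=c$ forces $c\ast a=a$), contradicting strict simplicity via Proposition~\ref{proj sub of crossed}. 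Hence $\bigcap$ is controlled by how much of $Q$ a stabilizing element can fix.

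The heart of the argument: suppose $g\in\lmlt(Q)_a\cap\lmlt(Q)_b$ with $a\neq b$. I want to show $g=1$. The set $Fix(g)$ is a subquandle of $Q$ (fixed points of any automorphism form a subquandle, since $L_{g(x)}=gL_xg^{-1}$ shows $g$ fixing $x,y$ implies $g$ fixes $x\ast y$), and it contains both $a$ and $b$, hence by part (1) it equals $Q$, i.e. $g=1$. This shows $\lmlt(Q)_a\cap\lmlt(Q)_b=1$ for all $a\neq b$, and since $\lmlt(Q)$ is transitive with nontrivial stabilizers (as $L_a\in\lmlt(Q)_a$ and $L_a\neq 1$ because $Q$ is faithful with $|Q|>1$), $\lmlt(Q)$ is Frobenius with Frobenius complement $\lmlt(Q)_a$.

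The main obstacle — really the only subtle point — is the claim that $Fix(g)$ is a subquandle and, more importantly, verifying that a two-element subquandle must be projection so that the crossed-set property is actually used correctly; but both are immediate from $L_{g(x)}=gL_xg^{-1}$ and from Example~\ref{examples}(i) together with Proposition~\ref{proj sub of crossed}. One should also double check the edge case $|Q|\le 2$: a one-element quandle is vacuously strictly simple and $\lmlt=1$, which is a degenerate Frobenius group, and $\mathcal{P}_2$ is strictly simple with $\lmlt=1$ as well; if the intended convention excludes these, the statement should be read for $|Q|>2$, in which case faithfulness genuinely forces $L_a\neq 1$ and the Frobenius conclusion is nondegenerate. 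I would state the proof for $|Q|>2$ and dispatch the small cases in a sentence.
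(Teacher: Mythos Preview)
Your proof is correct and follows essentially the same line as the paper's: the equivalence in part (1) is immediate from the definition, and for part (2) you show $\lmlt(Q)_a\cap\lmlt(Q)_b=1$ by noting that $Fix(g)$ is a subquandle containing $a,b$ and hence equals $Q$---exactly the paper's (tersely stated) argument. Two minor remarks: the paragraph establishing $Fix(L_a)=\{a\}$ via Proposition~\ref{proj sub of crossed} and the crossed-set property is a detour you never use (the $Fix(g)$-is-a-subquandle argument is self-contained and does not rely on it), so you can drop it; and your careful treatment of the converse of (1) and of the $|Q|\leq 2$ edge cases is more thorough than the paper, which leaves both implicit.
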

\begin{proof}
Let $a,b\in Q$, then $Sg(a,b)$ has more than one element and so it is equal to $Q$. If $h\in \lmlt(Q)_a\bigcap	\lmlt(Q)_b$, then $h=1$ and so $\lmlt(Q)$ is a Frobenius group (it is transitive and it contains $L_a\in\lmlt(Q)_a$ for every $a\in Q$).
%The first statement is striaghtforward. Let $G$ be a group of automorphisms and $g\in G$ such that $g(a)=a$ and $g(b)=b$ for some $a,b\in Q$. Let $c\in Q$, then $c=t^Q(a_1,\ldots ,a_n)$, for some term operation $t^Q$ and $a_i\in \{ a,b\}$ for every $1\leq i\leq n$, since $Q=Sg(\{a,b\})$. Therefore,
%\begin{displaymath}
%g(c)=g(t^Q(a_1,\ldots ,a_n))=t^Q(g(a_1),\ldots ,g(a_n))=t^Q(a_1,\ldots ,a_n)=c
%\end{displaymath}
%for every $c\in Q$.
\end{proof}
\begin{theorem}\label{min are affine}
Let $Q$ be a finite quandle and $|Q|> 2$. Then $Q$ is strictly simple if and only if $Q\cong \Aff(\mathbb{Z}_p^{n}, f)$ and $f$ acts irreducibly.
\end{theorem}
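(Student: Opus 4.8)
The plan is to prove the two directions separately, relying on the structural results already established. The direction ``$\Aff(\mathbb{Z}_p^n,f)$ with $f$ irreducible $\Rightarrow$ strictly simple'' is the easier one: since $f$ acts irreducibly, the $\mathbb{Z}_p[t,t^{-1}]$-module $\mathbb{Z}_p^n$ is simple, so by Remark~\ref{remark on cong of finite affine} (a finite connected affine quandle is latin, and its subquandles containing $0$ correspond to submodules) the only subquandle containing $0$ is the whole quandle. Since an affine quandle is homogeneous, every subquandle is a translate of one containing $0$, hence $Q$ has no proper subquandle and is strictly simple. (One should also note $Q$ is connected here: $f$ irreducible and $|Q|>2$ forces $1-f$ invertible, so $\mathrm{Im}(1-f)=\mathbb{Z}_p^n$ and $\dis(Q)$ is transitive; if $1-f$ were singular its kernel would be a proper submodule.)

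For the forward direction, suppose $Q$ is finite strictly simple with $|Q|>2$. By the remarks before the theorem, $Q$ is simple, connected and faithful, and by Lemma~\ref{min are 2 generated} $\lmlt(Q)$ is a Frobenius group with Frobenius complement $\langle L_a\rangle$. The key point is to show $Q$ is \emph{principal}: since $Q$ is simple it is in particular faithful and connected, so $\dis(Q)_a=Fix(\widehat{L}_a)$; I will argue that $Q$ being strictly simple forces $Fix(\widehat{L}_a)=1$, i.e.\ $\dis(Q)_a=1$, so $\dis(Q)$ is regular and $Q$ is principal by Proposition~\ref{principal and regular}. Indeed, if $\dis(Q)_a\neq1$ one produces a proper subquandle (e.g.\ the fixed set of a nontrivial element, or via the relation $\sigma_Q$ and Theorem~\ref{decomposition}); more directly, a strictly simple quandle must be latin, since a minimal proper subquandle argument or Proposition~\ref{On maximal proj sub} combined with finiteness (Corollary~\ref{principal}) gives latinity once faithfulness plus semiregularity are in hand — this is essentially the content of Theorem~\ref{minimal iff} referenced in the introduction. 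Having $Q$ principal latin, its displacement group $\dis(Q)\cong G$ with $Q\cong\Q(G,f)$, $f=\widehat{L}_a$.

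Next I would show $G$ is an elementary abelian $p$-group. By Proposition~\ref{Lagrange and Sylow} finite principal latin quandles have the Sylow property, and by Proposition~\ref{congruence for principal} (or Proposition~\ref{factor of principal latin}) congruences of the connected quandle $\Q(G,f)$ correspond to $f$-invariant normal subgroups of $G$; strict simplicity forces $G$ to have \emph{no} proper nontrivial $f$-invariant subgroup of any kind that yields a subquandle, in particular (using Proposition~\ref{subquandle with zero are subgroups}) $G$ has no proper nontrivial $f$-invariant subgroup at all. A group with no proper nontrivial characteristic subgroup is elementary abelian or a power of a nonabelian simple group; the $f$-invariance condition is even stronger. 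The $f$-invariant-subgroup-free condition kills the nonabelian simple possibility (a nonabelian simple group has proper $f$-invariant subgroups, e.g.\ Sylow-related or, more carefully, its center/derived structure is trivial but one still finds $f$-invariant subgroups by a counting/fixed-point argument), so $G$ is elementary abelian $p$-group $\mathbb{Z}_p^n$, hence $Q$ is affine, $Q\cong\Aff(\mathbb{Z}_p^n,f)$, and no proper $f$-invariant subgroup means $f$ acts irreducibly on $\mathbb{Z}_p^n$.

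The main obstacle is the step excluding a nonabelian characteristically-simple $G$: one must rule out that $G$ could be a (power of a) nonabelian simple group with $f$ acting without invariant proper subgroups, and then separately rule out $G=\mathbb{Z}_p^n$ with $f$ reducible (immediate) — the former requires knowing that any automorphism of such a $G$ fixes some proper nontrivial subgroup setwise, which one can get from the structure of $\aut{G}$ and a fixed-point/orbit-counting argument on the lattice of subgroups, or by invoking that the only finite simple \emph{latin} quandles with this property are affine, as recorded earlier in the paper (Proposition~\ref{min are affine} and the discussion of simple quandles with abelian displacement group from \cite{AG,JS}).
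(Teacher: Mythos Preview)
Your backward direction is fine and matches the paper's use of Proposition~\ref{subquandle with zero are subgroups}. The forward direction, however, has a real gap.

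Your first step --- showing $Q$ is principal by producing a proper subquandle from a nontrivial $h\in\dis(Q)_a$ --- does not work. By Lemma~\ref{min are 2 generated}, $\lmlt(Q)$ is Frobenius, so every nontrivial element of $\lmlt(Q)_a$ (hence of $\dis(Q)_a$) fixes \emph{only} $a$; thus $Fix(h)=\{a\}$ is never a proper subquandle. The $\sigma_Q$ route fails for the same reason: if $\dis(Q)_a\neq 1$ then $\dis(Q)_a=\dis(Q)_b$ would place a nontrivial element in $\lmlt(Q)_a\cap\lmlt(Q)_b=1$, so $[a]_{\sigma_Q}=\{a\}$ and Theorem~\ref{decomposition} gives nothing. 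Invoking Theorem~\ref{minimal iff} here is circular, since its implication (ii)$\Rightarrow$(iii) goes through Proposition~\ref{min are 2trans}, which in turn rests on the present theorem.

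Your second step --- ruling out a nonabelian $G$ with no proper $f$-invariant subgroup --- is where the real content lies, and you correctly flag it as the obstacle; but your proposed fixes are either vague (an unspecified orbit-count on subgroups) or circular (appealing to ``Proposition~\ref{min are affine}'' itself). The paper resolves both issues at once and earlier in the chain: from the Frobenius structure of $\lmlt(Q)$ it invokes the Frobenius kernel theorem together with Thompson's theorem to obtain a normal regular \emph{nilpotent} subgroup $N\leq\lmlt(Q)$; since $N$ is regular and $\widehat{L}_a$-invariant, $Q\cong\Q(N,\widehat{L}_a)$ and $\dis(Q)\cong[N,\widehat{L}_a]\leq N$ is nilpotent. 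Then $Q$ is a simple nilpotent quandle, hence abelian, hence affine, and Theorem~\ref{no infinite simple} finishes. The Frobenius/Thompson input is exactly the missing ingredient your outline needs.
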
 

\begin{proof}
$(\Rightarrow)$	The group $\lmlt(Q)$ is a Frobenius group (Lemma \ref{min are 2 generated}). By Theorem 1 of \cite{Frob2}, it has a normal regular nilpotent subgroup $N$ which provides a homogeneous representation for $Q$. The quandle $Q$ is connected, so $\dis(Q)$ embeds into a quotient of $N$ (\cite[Theorem 4.1]{HSV}). Therefore $\dis(Q)$ is nilpotent and $Q$ is a simple nilpotent quandle \cite[Theorem 1.2]{CP}. Simple nilpotent algebras are abelian, and so $Q$ is affine and then we can conclude by Theorem \ref{no infinite simple}.

$(\Leftarrow)$ Assume that $Q\cong \aff(\mathbb{Z}_p^{n}, f)$ and $f$ has no proper invariant subgroups. By Corollary \ref{subquandle with zero are subgroups}, $Q$ has no proper subquandles.
\end{proof}
According to Proposition \ref{min are affine}, strictly simple quandles correspond to irreducible representations of cyclic groups. Irreducible cyclic subgroups lie in irreducible cyclic subgroups of maximal order, called {\it Singer cycle}, and it is known that the order of a Singer cycle in $\GL{n}{p}$ is $p^n-1$. An example of a Singer cycle is the group of linear transformation given by $\setof{\lambda: a\mapsto \lambda a	}{\lambda\in \mathbb{F}_{q}^*}$ where $\mathbb{F}_q$ is the finite field on $p^n$ elements. According to the isomorphism theorem in \cite{Nel}, $\aff(A,f)\cong \aff(B,g)$ if and only if $f$ and $g$ are conjugate in $\aut{A}$. All Singer cycles are conjugate \cite[Theorem 2.3.3]{256}, so strictly simple quandles are isomorphic to $\aff(\mathbb{F}_q,\lambda)$ for some $\lambda\in \mathbb{F}_q^*$.\\ 
Recall that if $G$ be a group acting on a set $Q$, $G$ is doubly transitive if and only if $G_a$ is transitive on $Q\setminus\{ a\} $ for every $a\in Q$. 
%
%
%We will show that strictly simple quandles are doubly-homogeneous and that $\aut{Q}_0$ is a Singer cycle.
%
%
%Let $Q=\Q(\mathbb{Z}_p^{n},f)$ be a strictly simple quandle. Since $f$ acts irreducibly then any cycle of $f$ has to contain a base. Then $n\leq |a^{\langle f\rangle}|$ for every $a\in \mathbb{Z}_p^{n}$. More precisely, the set $\mathcal{B}_a=\setof{f^j(a)}{0\leq j\leq n-1}$ is a basis of $\mathbb{Z}_p^{n}$ for every $a\neq 0$. Indeed, let assume that $\sum_{j=0}^{n-1} k_j f^j(a)=0$, then set subgroup generated by $\setof{f^j(a)}{0\leq j \leq n-2}$ is a proper subrepresentation of $f$. So, $\mathcal{B}_a$ is a basis.
\begin{proposition}\label{min are 2trans}
Let $Q$ be a strictly simple quandle of size $p^n$. Then $\aut{Q}\cong \mathbb{Z}_p^n\rtimes \mathbb{Z}_{p^n-1}$ and $Q$ is doubly-homogeneous. \end{proposition}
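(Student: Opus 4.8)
The plan is to read off $\aut{Q}$ from the affine description of $Q$ and then exhibit inside it the natural $2$-transitive action. By Theorem~\ref{min are affine} (using $|Q|>2$) we may take $Q=\aff(\mathbb{Z}_p^n,f)$ with $f$ acting irreducibly. Since $Q$ is affine and connected it is principal over $G=(\mathbb{Z}_p^n,+)$, so Proposition~\ref{Prop:Aut of principal} gives
\[
\aut{Q}\;\cong\;G\rtimes C_{\aut{G}}(f)\;=\;\mathbb{Z}_p^n\rtimes C_{\GL{n}{p}}(f),
\]
with $G$ acting regularly and $\aut{Q}_0=C_{\aut{G}}(f)$ the stabilizer of $0$ acting on $G$ by group automorphisms. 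Thus everything reduces to computing the centralizer $C_{\GL{n}{p}}(f)$ and understanding how it moves $Q\setminus\{0\}$.

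The main step is the centralizer computation. Irreducibility of $f$ says exactly that $V=\mathbb{Z}_p^n$ is a simple, faithful, cyclic module over the commutative ring $\mathbb{F}_p[f]$; a commutative ring admitting a faithful simple module is a field, so $\mathbb{F}_p[f]$ is a field, necessarily $\mathbb{F}_p[f]\cong\mathbb{F}_q$ with $q=p^n$ (the minimal polynomial of $f$ being irreducible of degree $n$), over which $V$ is one-dimensional. The subring of $M_n(\mathbb{F}_p)$ commuting with $f$ is precisely $\mathrm{End}_{\mathbb{F}_p[f]}(V)=\mathrm{End}_{\mathbb{F}_q}(\mathbb{F}_q)\cong\mathbb{F}_q$ (equivalently, invoke Schur's lemma and Wedderburn's theorem on finite division rings), so its unit group is $C_{\GL{n}{p}}(f)\cong\mathbb{F}_q^\times\cong\mathbb{Z}_{p^n-1}$. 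This already yields $\aut{Q}\cong\mathbb{Z}_p^n\rtimes\mathbb{Z}_{p^n-1}$.

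For double homogeneity, recall (as stated just before the Proposition) that $\aut{Q}$ is doubly transitive if and only if $\aut{Q}_a$ is transitive on $Q\setminus\{a\}$ for every $a$; since $Q$ is connected, hence homogeneous, it suffices to check this for $a=0$. Via the identification $V\cong\mathbb{F}_q$ coming from $\mathbb{F}_p[f]\cong\mathbb{F}_q$, the previous paragraph identifies $\aut{Q}_0=C_{\aut{G}}(f)$ with the $\mathbb{F}_q$-linear automorphisms of the one-dimensional space $\mathbb{F}_q$, i.e. with multiplications by the elements of $\mathbb{F}_q^\times$; this action on $Q\setminus\{0\}=\mathbb{F}_q^\times$ is (sharply) transitive, so $\aut{Q}$ is doubly transitive — in fact $\aut{Q}\cong\mathrm{AGL}(1,q)$ acting naturally on $\mathbb{F}_q$. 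The only genuine obstacle is the centralizer computation of the second paragraph: making precise that irreducibility forces $\mathbb{F}_p[f]$ to be the field $\mathbb{F}_q$ and that its commutant in $M_n(\mathbb{F}_p)$ collapses to the scalars of that field. Granting this, both the isomorphism type of $\aut{Q}$ and double homogeneity follow at once from Proposition~\ref{Prop:Aut of principal} and the criterion for $2$-transitivity.
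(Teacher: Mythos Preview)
Your proof is correct and follows essentially the same route as the paper: both invoke Theorem~\ref{min are affine} and Proposition~\ref{Prop:Aut of principal} to reduce everything to computing $C_{\GL{n}{p}}(f)$, and then check transitivity of $\aut{Q}_0$ on $Q\setminus\{0\}$. The only differences are in execution: the paper cites a reference on Singer cycles for the centralizer where you supply the Schur/Wedderburn argument directly, and for the transitivity step the paper instead uses the two-generated property of strictly simple quandles (so $\aut{Q}_{0,a}=1$, whence $|a^{\aut{Q}_0}|=|\aut{Q}_0|=p^n-1$) rather than your explicit identification of the stabilizer action with $\mathbb{F}_q^\times$-multiplication.
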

\begin{proof}
Let $Q=\Aff(\mathbb{Z}_p	^n,f)$ be a strictly simple quandle. Since $f$ acts irreducibly on $\mathbb{Z}_p^n$ then $C=C_{GL_p(n)}(f)$ is a Singer cycle \cite[Theorem 2.3.5]{256} and then $C\cong \mathbb{Z}_{p^n-1}$. In particular, according to Corollary \ref{Prop:Aut of principal}, $\aut{Q}_0=C$, so it has size $p^n-1$. The quandle $Q$ is generated by $0$ and every $a\in Q$, so $\aut{Q}_{0,a}=1$. Therefore $|a^C|=|C|=p^n-1$ i.e. $\aut{Q}_0$ is transitive over $Q\setminus \{0\}$.
\end{proof}
The connected components of conjugation quandle over the automorphism group of a strictly simple quandle are affine connected quandles. 
\begin{proposition}\label{component of aut}
Let $Q=\aff(\mathbb{Z}_p^n,f)$ be a strictly simple quandle and let $C=Conj(\aut{Q})$. Then the connected component of $(a,g)$ in $C$ is isomorphic to the connected quandle $\aff(\mathbb{Z}_p^n,g)$ for every $g \neq 1$.
%\begin{displaymath}
%(a,g)^{\lmlt(C)}\cong \aff(\mathbb{Z}_p^n,g),\quad  (a,1)^{\lmlt(C)}\cong \mathcal{P}_{p^n-1},\quad (0,1)^{\lmlt(C)}=\{(0,1)\}
%\end{displaymath}
%%\begin{eqnarray*}
%%(a,g)^{\lmlt(C)}&=&\setof{(b, g)}{b\in \mathbb{Z}_p^n}\cong \aff(\mathbb{Z}_p^n,g)\\
%%(a,1)^{\lmlt(C)}&=& \setof{(b,1)}{b\in \mathbb{Z}_p^n\setminus\{0\}}\cong \mathcal{P}_{p^n-1}\\
%%(0,1)^{\lmlt(C)}&=&\{(0,1)\}
%%\end{eqnarray*} 
%for every $a\in \mathbb{Z}_p^n$ and $g\in \aut{Q}_{0}$. 
\end{proposition}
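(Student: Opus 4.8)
The plan is to make $\aut{Q}$ completely explicit and then read off the conjugation quandle directly. By Proposition~\ref{min are 2trans} and the discussion of Singer cycles preceding it, we may identify $\mathbb{Z}_p^n$ with the additive group of $\mathbb{F}_q$ (where $q=p^n$) and the point stabilizer $\aut{Q}_0$ with $\mathbb{F}_q^\ast$ acting by scalar multiplication, so that $\aut{Q}\cong\mathbb{F}_q\rtimes\mathbb{F}_q^\ast=\mathrm{AGL}_1(q)$. I will write its elements as pairs $(a,\mu)$ with $a\in\mathbb{F}_q$, $\mu\in\mathbb{F}_q^\ast$, and product $(a,\mu)(b,\nu)=(a+\mu b,\mu\nu)$; with this convention $g\ne1$ means $g\in\mathbb{F}_q^\ast\setminus\{1\}$.

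First I would compute the operation of $C=Conj(\aut{Q})$. Since $(a,\mu)^{-1}=(-\mu^{-1}a,\mu^{-1})$, a direct calculation gives
\[
(a,\mu)\ast(b,\nu)=(a,\mu)(b,\nu)(a,\mu)^{-1}=\bigl((1-\nu)a+\mu b,\ \nu\bigr).
\]
Two consequences are immediate. Every left multiplication of $C$ fixes the second coordinate, so for a fixed $g$ the set $S_g=\setof{(b,g)}{b\in\mathbb{F}_q}$ is invariant under $\lmlt(C)$, whence the connected component of any $(a,g)$ is contained in $S_g$. Moreover the same formula shows that $S_g$ is a subquandle of $C$ (closure under $\ast$ and $\backslash$, and idempotence, are visible from the formula), and that the bijection $(b,g)\mapsto b$ transports the operation of $S_g$ to $a\ast b=(1-g)a+gb$; that is, $S_g\cong\aff(\mathbb{F}_q,g)=\aff(\mathbb{Z}_p^n,g)$.

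Next I would invoke the hypothesis $g\ne1$: then $1-g$ is an invertible $\mathbb{F}_q$-scalar, so the displacement group of $\aff(\mathbb{Z}_p^n,g)$ acts by translations by $\mathrm{Im}(1-g)=\mathbb{Z}_p^n$ and $\aff(\mathbb{Z}_p^n,g)$ is connected (indeed latin, by Corollary~\ref{affine}). Since $S_g$ is a connected subquandle whose left multiplications $L_{(c,g)}|_{S_g}$ are restrictions of the corresponding $L_{(c,g)}\in\lmlt(C)$, the set $S_g$ is a single $\lmlt(C)$-orbit, hence one connected component of $C$. Combined with the inclusion of the previous paragraph, the connected component of $(a,g)$ in $C$ equals $S_g\cong\aff(\mathbb{Z}_p^n,g)$, which proves the claim.

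All the computations here are mechanical; the one step deserving care is the very first, namely identifying $\aut{Q}$ with $\mathrm{AGL}_1(q)$ so that the given $g$ becomes honest $\mathbb{F}_q$-scalar multiplication. This is precisely what ensures that the subquandle $S_g$ is an affine quandle over $\mathbb{Z}_p^n$ with \emph{exactly} the automorphism $g$, rather than merely some affine quandle whose isomorphism type would be harder to identify.
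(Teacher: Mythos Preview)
Your proof is correct and follows essentially the same approach as the paper's: both identify $\aut{Q}$ with the semidirect product $\mathbb{Z}_p^n\rtimes\mathbb{Z}_{p^n-1}\cong\mathrm{AGL}_1(q)$, compute the conjugation explicitly to see that the second coordinate is preserved and the first transforms affinely, and then use that $1-g$ is invertible for $g\neq 1$ to conclude that $S_g=\mathbb{Z}_p^n\times\{g\}$ is a single connected component isomorphic to $\aff(\mathbb{Z}_p^n,g)$. Your write-up is in fact slightly more explicit than the paper's in separating the two containments (the component lies in $S_g$; $S_g$ is connected hence a single component) and in flagging why the identification makes $g$ the correct automorphism on the nose.
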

\begin{proof}
The automorphism group of $Q$ is $\aut{Q}\cong \mathbb{Z}_p^n\rtimes \mathbb{Z}_{p^n-1}$. Therefore
\begin{displaymath}
(b,h)(a,g)(b,h)^{-1}=(b+h(a),hg)(h^{-1}(-b),h^{-1})=((1-g)(b)+h(a),g).
\end{displaymath}
If $g\neq 1$ then $1-g$ is a bijection, since $\aut{Q}_0\bigcap \aut{Q}_b=1$ for every $b\in Q$. Then the orbit of $(a,g)$ is $\mathbb{Z}_p^n\times \{g\}$, therefore $(a,g)^{\lmlt(C)}\cong \Aff(\mathbb{Z}_p^n,g)$ which is connected since $Fix(g)=0$.
%If $g=1$ then the orbit of $(a,1)$ is $\setof{(h(a),1)}{h\in \aut{Q}_0}$ and since $\aut{Q}_0$ is transitive on $Q\setminus \{0\}$, the orbits is $\setof{(b,1)}{b\neq 0}$.
\end{proof}
In particular, if $Q$ is a strictly simple quandle of size $p^n$, then $Conj(\aut{Q})$ contains every strictly simple quandle of size $p^n$ (indeed they are isomorphic to $\aff(\mathbb{Z}_p^n,g)$ where $g\in C_{GL_n(p)}(f)$ acts irreducibly).
\subsection{Finite simple latin quandles}
In this section we give a characterization of finite simple latin quandles in terms of several equivalent properties.
First we show that finite doubly-homogeneous quandles are either projection or latin.
\begin{lemma}\label{Lem: 2trans are Latin}
Let $Q$ be a finite doubly homogeneous quandle. Then $Q$ is either projection or latin.
\end{lemma}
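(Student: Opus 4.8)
The plan is to analyse the image of a single right multiplication. Fix $a\in Q$ and consider $R_a\colon b\mapsto b\ast a$. Since $a\ast a=a$, we have $a\in \mathrm{Im}(R_a)$. For every $h\in\aut{Q}$ fixing $a$ one computes $h(b\ast a)=h(b)\ast h(a)=h(b)\ast a$, so $h$ maps $\mathrm{Im}(R_a)$ bijectively onto itself; hence $\mathrm{Im}(R_a)$ is a union of orbits of the stabiliser $\aut{Q}_a$. Because $Q$ is doubly homogeneous, $\aut{Q}_a$ is transitive on $Q\setminus\{a\}$, so its orbits are exactly $\{a\}$ and $Q\setminus\{a\}$. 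Since $\mathrm{Im}(R_a)$ contains $a$, this forces $\mathrm{Im}(R_a)\in\{\{a\},\,Q\}$.

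Next I would promote this to a statement uniform in $a$. Using the identity $R_{h(a)}=hR_ah^{-1}$, valid in any quandle, together with transitivity of $\aut{Q}$ on $Q$, the two options occur simultaneously at all points: either $\mathrm{Im}(R_a)=Q$ for every $a\in Q$, or $\mathrm{Im}(R_a)=\{a\}$ for every $a\in Q$. In the first case each $R_a$ is surjective, and as $Q$ is finite it is bijective, i.e.\ $Q$ is latin. In the second case $b\ast a=a$ for all $a,b$, which says exactly that $Q$ is a projection quandle, and this completes the argument.

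I do not expect a genuine obstacle; the only points to treat carefully are that the dichotomy must be made uniform in $a$ before invoking finiteness (right multiplications of an arbitrary quandle need not be injective, so finiteness is essential to pass from ``surjective'' to ``bijective''), and the trivial observation that $\mathrm{Im}(R_a)=\{a\}$ for all $a$ is precisely the statement that $Q$ is projection. An alternative route runs through the relation $\P$ of Section~\ref{Sec: prelim}: it is $\aut{Q}$-invariant, so $2$-transitivity forces $\P=0_Q$ or $\P=1_Q$, the former giving $Fix(L_a)=\{a\}$ via Proposition~\ref{proj sub of crossed} and the latter giving a projection quandle; but the argument via $R_a$ above is shorter and self-contained.
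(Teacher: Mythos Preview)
Your proof is correct and follows essentially the same approach as the paper: both arguments show that $\mathrm{Im}(R_a)$ is invariant under $\aut{Q}_a$ and then use double transitivity to force $\mathrm{Im}(R_a)\in\{\{a\},Q\}$. The paper phrases this as ``assume $Q$ is not projection, pick $b$ with $b\ast a\neq a$, and use $\aut{Q}_a$ to hit any $c\neq a$'', while you state the orbit-union observation explicitly and then make the dichotomy uniform via $R_{h(a)}=hR_ah^{-1}$; the content is the same.
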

\begin{proof}
Let $Q$ be not a projection quandle, then for every $a\in Q$ there exists $b\in Q$ such that $b\ast a\neq a$. Let $c\in Q$, then there exists $f \in \aut{Q}_a$ such that $c=f(b\ast a)$, since $\aut{Q}$ is doubly transitive. Then $c=f(b\ast a)=f(b)\ast a=R_a(f(b))$. So $R_a$ is surjective and then bijective for every $a\in Q$.
\end{proof}
%
%\begin{proof}
%By Proposition \ref{min are affine}, $Q\cong \mathcal{Q}(\mathbb{Z}_p^{n}, f)$. 
%Let $\textbf{a}=(a_0,a_1,\ldots,a_{n-1})\in\mathbb{Z}_p^{n}$ be a non zero vector, then the mapping
%\begin{displaymath}
%f_{\textbf{a}}:Q\longrightarrow Q, \ \quad b\mapsto \sum_{i=0}^{n-1} a_i f^{i}(b)
%\end{displaymath}
%is an automorphism of $Q$ since $\mathcal{B}_b$ is a base for every $b\in Q$ and $f_\textbf{a}\in\aut{Q}_0$.\\
%Let $0\neq a,b\in Q$, $b=\sum_{i=0}^{n-1} a_i f^{i}(a)=f_{\textbf{a}}(a)$ for some $\textbf{a}$, since $\mathcal{B}_a$ is a basis. So $\aut{Q}$ is doubly transitive.  \\
%Since $Q$ is generated by any two elements, $\aut{Q}_0$ is regular over $Q\setminus\{0\}$, hence $|\aut{Q}_0|=p^k-1=|\setof{\sum_{j=0}^{n-1}a_j f^j}{(a_0,\ldots,a_{n-1})\in \mathbb{Z}_p^n\setminus\{0\}}|$. Let $g\in \aut{Q}_0$, if $g^k(a)=a$, then $g^k=1$, so $g$ is semiregular. 
%
%So, $\aut{Q}_0\cup \{0\}$ is a finite field of size $p^n$. Therefore, $\aut{Q}_0$ is isomorphic to the multiplicative group of a finite field, and therefore it is a Singer cycle.
%\end{proof}
%
%So strictly simple quandles are actually irreducible representation of cyclic subgroups of a Singer cycle and we can represent them as $\Q(\mathbb{F}_q,t_{\lambda})$.
Using Corollary \ref{min are 2trans} and Lemma \ref{Lem: 2trans are Latin} we can finally prove one of the main theorems. 
\begin{theorem}\label{minimal iff}
Let $Q$ be a finite simple quandle and $|Q|>2$. The following are equivalent:
\begin{enumerate}
%\item $\X$ is principal;
\item[(i)] $Q$ is abelian.
\item[(ii)] $Q$ is strictly simple.
%\item[(iii)] $Q\cong \mathcal{Q}(\mathbb{Z}_p^{n}, f)$ and $f$ has no proper invariant subgroups;
\item[(iii)] $Q$ is doubly-homogeneous. 
\item[(iv)] $Q$ is latin.
\end{enumerate}
\end{theorem}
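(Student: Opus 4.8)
The plan is to prove the four conditions equivalent through the implications (ii)$\Rightarrow$(i), (i)$\Rightarrow$(ii), (ii)$\Rightarrow$(iii), (iii)$\Rightarrow$(iv) and (iv)$\Rightarrow$(ii), so that (ii) is equivalent to each of (i), (iii), (iv). The first thing to record is that a simple quandle is automatically connected: the blocks of any congruence are subquandles, hence connected and faithful by \cite[Lemma 1]{J}, and applying this to $0_Q$ shows that $Q$ itself is connected, so $\dis(Q)$ is transitive. I will use this freely.

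For (ii)$\Rightarrow$(i): if $Q$ is strictly simple then $Q\cong\aff(\mathbb{Z}_p^n,f)$ with $f$ acting irreducibly by Theorem~\ref{min are affine}, and an affine quandle is abelian, being polynomially equivalent to a module (cf.\ \cite{CP}). For (i)$\Rightarrow$(ii): a connected abelian quandle is affine by \cite[Theorem 2.2]{JZ2}, so $Q\cong\aff(A,g)$; as $Q$ is simple with $|Q|>2$, Theorem~\ref{no infinite simple} forces $Q\cong\aff(\mathbb{Z}_p^n,f)$ with $f$ irreducible, and then Theorem~\ref{min are affine} gives that $Q$ is strictly simple. The implication (ii)$\Rightarrow$(iii) is exactly Proposition~\ref{min are 2trans}. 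For (iii)$\Rightarrow$(iv): Lemma~\ref{Lem: 2trans are Latin} says a finite doubly homogeneous quandle is projection or latin, and the projection quandle $\mathcal{P}_n$ with $n>2$ is not simple (every partition of its underlying set is a congruence), so $Q$ must be latin.

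The real content, and the step I expect to be the main obstacle, is (iv)$\Rightarrow$(ii): a finite simple latin quandle has no proper subquandle. This is the converse for quandles of \cite[Theorem 3.4]{Va}, and the quandle analogue of the result for left-distributive quasigroups in \cite{GALK}. Since $Q$ is finite and latin, right division $a/b=R_b^{-1}(a)$ is a term operation, so every subquandle of $Q$ is also a subquasigroup of the left-distributive quasigroup $(Q,\ast,\backslash,/)$. Assuming $Q$ is not strictly simple, I would choose a proper subquandle $M$ minimal under inclusion; then $M$ is strictly simple, hence $M\cong\aff(\mathbb{Z}_p^n,g)$ by Theorem~\ref{min are affine} and $|M|=p^n$. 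The goal is then to reach a contradiction with $1<|M|<|Q|$ by exploiting simplicity of $Q$ --- which, via the correspondence $N\mapsto\mathcal{O}_N$ of \cite[Lemma 2.6]{CP}, forces every nontrivial normal subgroup of $\lmlt(Q)$ contained in $\dis(Q)$ to be transitive --- together with latinity, through which faithfulness gives $\dis(Q)_a=\mathrm{Fix}(\widehat{L}_a)$ and $\{L_bL_a^{-1}:b\in Q\}$ becomes a transversal of $\dis(Q)_a$ in $\dis(Q)$; these are used to pin down the subgroup $\langle L_aL_b^{-1}:a,b\in M\rangle$ and its action on $M$. Once $M$ is excluded and $Q$ is known to be strictly simple, Theorem~\ref{min are affine} returns $Q\cong\aff(\mathbb{Z}_p^n,f)$, which closes the cycle back to (i). The delicate point is precisely producing that contradiction: understanding how a strictly simple subquandle can embed in a simple latin quandle is exactly where one must either re-prove or import the quasigroup-theoretic input behind \cite{GALK,Va}.
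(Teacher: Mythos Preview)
Your implications (i)$\Leftrightarrow$(ii), (ii)$\Rightarrow$(iii) and (iii)$\Rightarrow$(iv) are fine and essentially match the paper (the paper's (i)$\Rightarrow$(ii) quotes \cite[Theorem~3.4]{Va} directly, but your route through Theorems~\ref{no infinite simple} and~\ref{min are affine} is equally valid). The problem is your last implication. You acknowledge that (iv)$\Rightarrow$(ii) is ``the real content'' and then leave it open: you pick a minimal proper subquandle $M$, identify it as $\aff(\mathbb{Z}_p^n,g)$, and then hope to derive a contradiction from simplicity and latinity --- but you never produce one. The sketch with $\langle L_aL_b^{-1}:a,b\in M\rangle$ and transversals of $\dis(Q)_a$ does not obviously lead anywhere, and the appeal to \cite{GALK,Va} is not an argument. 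As written, this is a genuine gap.

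The paper avoids this difficulty entirely by closing the cycle with (iv)$\Rightarrow$(i) rather than (iv)$\Rightarrow$(ii). The key external input is Stein's theorem \cite[Theorem~1.4]{Stein2}: for a finite latin quandle (equivalently, a finite left-distributive quasigroup) the displacement group $\dis(Q)$ is solvable. By the commutator-theoretic correspondence \cite[Theorem~1.2]{CP}, solvability of $\dis(Q)$ is equivalent to solvability of $Q$ as an algebra. A simple solvable algebra is abelian (the derived congruence $\gamma^1(Q)$ must be $0_Q$ or $1_Q$; if $1_Q$ the derived series never terminates, so $\gamma^1(Q)=0_Q$). This three-line argument replaces the subquandle analysis you were attempting and is what you are missing.
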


\begin{proof}
(i) $\Rightarrow$ (ii) By Theorem 3.4 of \cite{Va}, every finite simple abelian algebra has no proper subalgebras.

(ii) $\Rightarrow$ (iii) It follows by Proposition \ref{min are 2trans}.

(iii) $\Rightarrow$ (iv) Since $|Q|>2$ and it is simple, $Q$ is not a projection quandle. According to Lemma \ref{Lem: 2trans are Latin} $Q$ is latin.

(iv) $\Rightarrow$ (i) According to Theorem 1.4 of \cite{Stein2}, $\dis(Q)$ is solvable. So $Q$ is solvable (\cite[Theorem 1.2]{CP}) and  simple, then abelian. 
\end{proof}	
Doubly-transitive quandles described in \cite{V} are the doubly-homogeneous quandles  for which the left multiplication group coincides with the automorphism group (they are isomorphic to $\aff(\mathbb{F}_q,\lambda)$ where $\lambda$ is a generator of the multiplicative group of $\mathbb{F}_q$). 

As a consequence of the Sylow and Lagrange properties for principal latin quandles and the characterization of simple latin quandles (Theorem \ref{minimal iff}) we have the following corollary.
\begin{corollary}\label{on min subq}
Let $Q$ be a finite principal latin quandle and let $p$ be a prime dividing $|Q|$. Then there exists a strictly simple subquandle of $Q$ of size a power of $p$.
\end{corollary}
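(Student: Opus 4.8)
The plan is to produce the subquandle by first passing to a Sylow $p$-subquandle and then shrinking to a minimal subquandle inside it.

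First I would invoke Proposition \ref{Lagrange and Sylow}: since $Q$ is a finite principal latin quandle it has the Sylow property, so if $p^n$ denotes the maximal power of $p$ dividing $|Q|$ (with $n\geq 1$), there is a subquandle $S\leq Q$ with $|S|=p^n$. Being a finite subquandle of $Q$, $S$ is also a subalgebra of $Q$ viewed as an isogroup (on a finite quandle the subquandles of $(Q,*,\backslash)$ and the subalgebras of $(Q,*,\backslash,/)$ coincide), so by Proposition \ref{subquandle with zero are subgroups} $S$ is itself a finite principal latin quandle; in particular it has the Lagrange property, again by Proposition \ref{Lagrange and Sylow}.

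Next I would take a subquandle $M$ of $S$ with $|M|\geq 2$ that is minimal with respect to inclusion; such an $M$ exists because $|S|=p^n\geq 2$, so $S$ itself is a candidate. By minimality $M$ has no proper subquandle, hence $M$ is strictly simple (as noted after the definition of strictly simple, a minimal proper subquandle of a finite quandle is strictly simple, and here $M$ has no proper subquandle at all). Since $M\leq S$ and $S$ has the Lagrange property, $|M|$ divides $|S|=p^n$, and $|M|\geq 2$ forces $|M|=p^k$ for some $1\leq k\leq n$. As $M$ is in particular a subquandle of $Q$, it is the required strictly simple subquandle of $p$-power order. (When $S$ has no proper subquandle one simply gets $M=S$ and $k=n$, so no case distinction is needed.)

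The only delicate point is making sure the Lagrange property is actually available inside the Sylow subquandle: this needs that $S$, and a fortiori its subquandle $M$, are again principal latin quandles, which is exactly Proposition \ref{subquandle with zero are subgroups} once one uses that finiteness makes the quandle and LD-quasigroup notions of subalgebra agree. Beyond that, the argument is a direct combination of the quoted Sylow and Lagrange properties with the fact that minimal subquandles of finite quandles are strictly simple, so I expect no real obstacle.
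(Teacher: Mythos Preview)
Your argument is correct and follows essentially the same route as the paper: pass to a Sylow $p$-subquandle via Proposition~\ref{Lagrange and Sylow}, observe it is again principal latin (the paper asserts this directly, you justify it through Proposition~\ref{subquandle with zero are subgroups}), and then take a minimal subquandle inside it, which is strictly simple of $p$-power size by the Lagrange property. Your write-up is in fact slightly more careful than the paper's in explaining why the Sylow subquandle inherits the principal latin structure.
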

\begin{proof}
Let $p$ be a prime dividing $|Q|$. Then there exists a $p$-Sylow subquandle of $S_p$ and it is principal and latin (\ref{Lagrange and Sylow}). Then every minimal subquandle of $S_p$ (with respect to inclusion) is a strictly simple subquandle of size a power of $p$, since $S_p$ has the Lagrange property.
\end{proof}

%As a corollary of Theorem \ref{minimal iff}, we obtain that there are no latin quandles of size $2n$ with $n$ coprime with $30$, using a non-existing known result for connected quandles of size $2p$ for $p>5$.
%\begin{corollary}\label{no 2n}
%There is no latin quandle of size $2n$ with $n$ coprime with $2,3$ and $5$.
%\end{corollary}
%\begin{proof}
%Let $Q$ be a counterexample of minimal size. The quandle $Q$ is not simple because its size is not a power of a prime. There are no connected quandles of size $2p$ for $p>5$ \cite{McC}, so $n$ is not prime. For every $\alpha\in Con(Q)$ either $[a]_\alpha$ or $Q/\alpha$ has size $2m$ with $m<n$ coprime with $30$, violating the minimality of the size of $Q$.
%%
%%Let $|Q|=2n$ and $n=p_1^{a_1}\ldots p_k^{a_k}$ is coprime with $30$. If $Q$ is simple, then $|Q|=2^k$, contraddiction. Let proceed by induction of $l=\sum_{i=1}^k a_i$. If $l=1$, then $|Q|=2p$ and there are no connected quandles of size $2p$ for $p>5$ \cite{McC}. Let $l>1$ and $\alpha\in Con(Q)$. Then either $Q/\alpha$ or $[a]_\alpha$ have size $2m$ for some $m<n$ coprime with $30$, contraddiction.
%\end{proof}

%
\subsection{Classification of Doubly-homogeneous quandles}
In this section we show that all finite doubly homogeneous quandles are powers of a strictly simple quandles. 
The automorphism group of a doubly transitive quandle acts transitively on the set of $2$-generated subquandles, so they are all isomorphic and this property holds also in all factors. Recall that finite doubly homogeneous quandles are either projection or latin (see Lemma \ref{Lem: 2trans are Latin}) and that latin quandles are solvable. In the following we denote by $Sg(X)$ the subquandle generated by a subset $X$.
\begin{lemma}\label{2-gen sub}
Let $Q$ be a finite doubly homogeneous quandle. Then there exists a strictly simple quandle $M$ such that every $2$-generated subquandle of $Q/\alpha$ is isomorphic to $M$ for every $\alpha\in Con(Q)$.
%$M\cong Sg([a]_\alpha,[b]_\alpha)\subseteq Q/\alpha$ for every $\alpha\in Con(Q)$ and every $[a]_\alpha\neq [b]_\alpha\in Q/\alpha$. 
\end{lemma}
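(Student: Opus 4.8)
The plan is to take $M$ to be the common isomorphism type of the $2$-generated subquandles of $Q$ itself (we may assume $|Q|\geq 2$, since otherwise there is nothing to prove), and then to establish two facts: that $M$ is strictly simple, and that passing from $Q$ to any factor $Q/\alpha$ does not change the isomorphism type of the $2$-generated subquandles. Since $\aut{Q}$ is doubly transitive, the first claim is immediate: given distinct $a,b$ and distinct $a',b'$ in $Q$, any $h\in\aut{Q}$ with $h(a)=a'$ and $h(b)=b'$ carries $Sg(a,b)$ isomorphically onto the subquandle generated by $h(a)$ and $h(b)$, i.e. onto $Sg(a',b')$. Hence all $2$-generated subquandles of $Q$ are isomorphic to a fixed quandle $M$, and $|M|\geq 2$.

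The next step is to show that $M$ is strictly simple, which by Lemma~\ref{min are 2 generated} amounts to showing that $M$ is generated by every pair of its distinct elements. Fix a subquandle $N\leq Q$ with $N\cong M$ and pick distinct $c,d\in N$. Then $Sg(c,d)$ is a $2$-generated subquandle of $Q$, hence isomorphic to $M$, and it is contained in $N$; this is where finiteness is used, since $|Sg(c,d)|=|M|=|N|$ then forces $Sg(c,d)=N$. Thus $M$ is generated by any pair of distinct elements, so it is strictly simple and, in particular, simple.

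Finally I would transfer this property to $Q/\alpha$. Let $\pi\colon Q\to Q/\alpha$ be the canonical projection and take $[a]\neq[b]$ in $Q/\alpha$; choosing representatives yields distinct $a,b\in Q$, and since the image of a generated subalgebra under a homomorphism is generated by the images, $Sg([a],[b])=\pi(Sg(a,b))$. Hence $\pi$ restricts to a surjective homomorphism from $Sg(a,b)\cong M$ onto $Sg([a],[b])$; because $M$ is simple, the kernel of this restriction is $0_M$ or $1_M$, and it cannot be $1_M$ since the image $Sg([a],[b])$ contains the two distinct elements $[a]$ and $[b]$. So the restriction is an isomorphism and $Sg([a],[b])\cong M$ for every $\alpha\in Con(Q)$, which is what we want. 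The only delicate point is the strict simplicity of $M$ in the second step, where finiteness genuinely enters; once $M$ is known to be simple, the passage to factors is a routine application of the behaviour of generated subalgebras under homomorphisms.
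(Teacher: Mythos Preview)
Your proof is correct and follows the same three-step skeleton as the paper: doubly transitivity forces all $2$-generated subquandles of $Q$ to be isomorphic, this common type $M$ is strictly simple, and simplicity of $M$ pushes the isomorphism down to every factor $Q/\alpha$.

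The only organizational difference lies in how strict simplicity of $M$ is obtained. The paper first invokes Lemma~\ref{Lem: 2trans are Latin} to split into the projection case (where $M=\mathcal{P}_2$ trivially) and the latin case; in the latin case it \emph{starts} from a strictly simple subquandle $M\leq Q$ (which exists by finiteness as a minimal proper subquandle) and then uses doubly transitivity to identify every $Sg(a,b)$ with this $M$. You reverse the order: you first use doubly transitivity to show all $Sg(a,b)$ are isomorphic to a single $M$, and then recover strict simplicity of $M$ via the cardinality argument $|Sg(c,d)|=|M|=|N|$. This lets you bypass the projection/latin dichotomy altogether, giving a slightly more uniform argument; the paper's route, on the other hand, makes the connection to the ambient structure theory (Lemma~\ref{Lem: 2trans are Latin}) explicit. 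Neither approach is materially shorter or more general than the other.
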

\begin{proof}
If $Q$ is projection, then $Sg([a]_\alpha,[b]_\alpha)\cong \mathcal{P}_2$ for every $\alpha\in Con(Q)$ and every $[a]_\alpha,[b]_\alpha \in Q/\alpha$.

Let $Q$ be latin and let $M$ be a strictly simple quandle containing $a,b\in Q$, then $M=Sg(a,b)$ and $h(Sg(a,b))=Sg(h(a),h(b))\cong M$ for every $h\in\aut{Q}$. Since $\aut{Q}$ is doubly transitive, every pair of elements of $Q$ generates a subquandle isomorphic to $M$. Let $\alpha\in Con(Q)$, then $Sg([a]_\alpha,[b]_\alpha)$ is the image of $M$ with respect to the canonical map $a\mapsto [a]_\alpha$. The quandle $M$ is simple, so whenever $[a]_\alpha\neq [b]_\alpha$ then $Sg([a]_\alpha,[b]_\alpha)\cong M$.
\end{proof}

Let $Q$ be a finite latin doubly homogeneous quandle. We define the {\it minimal quandle of $Q$} as the unique strictly simple quandle in $\textbf{H}\textbf{S}(Q)$ up to isomorphism. We denote it by $M_Q$. 

\begin{proposition}\label{doubly-trans are nilp}
Let $Q$ be a finite latin doubly homogeneous quandle. Then $Q$ is a nilpotent quandle of prime power size.
%Let $Q$ be a finite latin and $M$ be a strictly simple quandle. If $Sg([a]_\alpha,[b]_\alpha)\cong M$ for every $\alpha\in Con(Q)$ and for every $[a]_\alpha,[b]_\alpha\in Q/\alpha$, then $Q$ is a nilpotent quandle of prime power size.
\end{proposition}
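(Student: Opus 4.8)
The plan is to reduce the statement to the case where $Q$ is abelian (equivalently, affine), for which both assertions become easy; the crux is that double homogeneity forbids non-trivial characteristic congruences.

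\smallskip
\noindent\emph{Step 1: $Q$ is abelian.} Since $Q$ is latin it is connected, and by Theorem~1.4 of \cite{Stein2} the group $\dis(Q)$ is solvable; hence $Q$ is solvable by \cite[Theorem~1.2]{CP}. In particular the first derived congruence $\gamma^1(Q)=[1_Q,1_Q]$ is proper: were it equal to $1_Q$, the derived series would be constant and never reach $0_Q$, which is impossible since $0_Q\neq 1_Q$ for $|Q|>2$. On the other hand $\gamma^1(Q)$ is a characteristic congruence, because the commutator of congruences is preserved by automorphisms and every $h\in\aut{Q}$ fixes $1_Q$; so the partition of $Q$ into $\gamma^1(Q)$-classes is $\aut{Q}$-invariant. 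But $\aut{Q}$ is $2$-transitive, hence primitive, so $Q$ has no $\aut{Q}$-invariant congruence other than $0_Q$ and $1_Q$; therefore $\gamma^1(Q)=0_Q$, i.e.\ $Q$ is abelian. Being connected and abelian, $Q$ is affine, $Q\cong\aff(A,f)$, by \cite[Theorem~2.2]{JZ2}; and since abelian quandles are nilpotent (of class at most $1$), the nilpotence part of the statement is already settled.

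\smallskip
\noindent\emph{Step 2: $|Q|$ is a prime power.} First I would record that a finite latin quandle has no projection subquandle (if $\{a,b\}$ with $a\neq b$ were one, then $R_b(a)=b=R_b(b)$ would contradict injectivity of $R_b$), so every strictly simple subquandle of $Q$ has more than two elements and Theorem~\ref{min are affine} applies to it. By Lemma~\ref{2-gen sub} with $\alpha=0_Q$, every $2$-generated subquandle of $Q$ is isomorphic to the minimal quandle $M_Q$, and by Lemma~\ref{min are 2 generated} every strictly simple subquandle of $Q$ is $2$-generated; hence all strictly simple subquandles of $Q$ are isomorphic to $M_Q\cong\aff(\mathbb{Z}_p^{n},f_0)$ for a fixed prime $p$, and in particular have order $p^n$. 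Now suppose some prime $q\neq p$ divides $|A|$. The $q$-primary part $A_q$ is a non-zero $f$-invariant subgroup; choose $S\leq A_q$ minimal among the non-zero $f$-invariant subgroups. Multiplication by $q$ sends $S$ to an $f$-invariant subgroup of $S$, so minimality forces $qS=0$ (it cannot be $S$, as $S$ is a non-trivial finite $q$-group). Hence $S$ is elementary abelian and $f|_S$ acts on it with no non-trivial proper invariant subgroup, i.e.\ irreducibly; as an $f$-invariant subgroup of $A$, $S$ is a subquandle of $\aff(A,f)$ isomorphic to $\aff(S,f|_S)$, which by Theorem~\ref{min are affine} is strictly simple. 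But $|S|$ is a power of $q\neq p$, a contradiction. So $|Q|=|A|$ is a power of $p$.

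\smallskip
The only genuinely non-routine point is Step~1: it combines solvability of finite latin quandles (via \cite{Stein2}) with the impossibility of a non-trivial characteristic congruence in a doubly homogeneous quandle to force abelianness in one stroke. Once that is secured, Step~2 is a direct inspection of the finite $\mathbb{Z}[t,t^{-1}]$-module $A$ using the description of subquandles of affine quandles and the classification of strictly simple quandles in Theorem~\ref{min are affine}.
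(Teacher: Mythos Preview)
Your proof is correct, and it takes a genuinely different route from the paper's.

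The paper argues by induction on the solvability length: in the abelian base case it uses the Sylow property of principal latin quandles (Corollary~\ref{on min subq}) to find a strictly simple subquandle for each prime divisor, and in the inductive step it splits $|Q|$ as $|Q/\gamma^{n-1}(Q)|\cdot |[a]|$, handling the factor by induction and the block by the affine/base case. Only after obtaining prime power size does it invoke \cite[Theorem~1.4]{CP} to conclude nilpotence.

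Your Step~1 bypasses all of this. You observe that $\gamma^{1}(Q)=[1_Q,1_Q]$ is an $\aut{Q}$-invariant congruence (the term-condition commutator is preserved by automorphisms), and since a $2$-transitive group is primitive, this forces $\gamma^{1}(Q)\in\{0_Q,1_Q\}$; solvability rules out $1_Q$, so $Q$ is abelian outright. This is strictly stronger than what the proposition asserts and in fact anticipates part of the proof of Theorem~\ref{doubly-hom are power of min}, where the paper needs a separate argument (via $\sigma_Q$ and nilpotence) to reach principality and then abelianness of the underlying group. Your Step~2 is then essentially the paper's base case, phrased directly in terms of minimal $f$-invariant subgroups rather than through Corollary~\ref{on min subq}; the two are equivalent once one knows $Q$ is affine.

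One small point of exposition in Step~2: when you invoke Theorem~\ref{min are affine} for $\aff(S,f|_S)$ you should make explicit that $|S|>2$. You have the ingredient in hand---if $|S|=2$ then $f|_S=1$ and $S$ sits in $Q$ as a projection subquandle of size $2$, contradicting the latin hypothesis you already recorded---but the sentence as written only says that strictly simple subquandles have more than two elements, whereas at that moment you have not yet shown $\aff(S,f|_S)$ is strictly simple. Reordering those two remarks fixes it.
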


\begin{proof}
Assume that $|M_Q|$ is a power of $p$. We prove by induction on the solvability length of $Q$ that if all $2$-generated subquandles are strictly simple and isomorphic then $|Q|$ is a power of $p$. If $Q$ is abelian, then in particular it is principal and latin. If $p,q$ are primes dividing $|Q|$ then $|Q|$ has a strictly simple subquandle of size a power of $p$ and of size a power of $q$ by virtue of Corollary \ref{on min subq}. Then $p=q$ since all $2$ generated subquandles are isomorphic and so the size of $Q$ is a power of $p$. Let $Q$ be solvable of length $n$, i.e. $\gamma^{n-1}(Q)$ is abelian. The blocks of $\gamma^{n-1}(Q)$ are abelian latin subquandles and then affine. Using again Corollary \ref{on min subq} the blocks have size a power of $p$. According to Lemma \ref{2-gen sub} every two-generated subquandle in the factor $Q/\gamma^{n-1}(Q)$ is isomorphic to $M_Q$ and then by induction $Q/\gamma^{n-1}(Q)$ has size a power of $p$. So $|Q|=|Q/\gamma^{n-1}(Q)||[a]|$ is a power of $p$. Finally, by \cite[Theorem 1.4]{CP}, $Q$ is nilpotent.
\end{proof}

%
%\begin{theorem}\label{dh are affine}
%Let $Q$ be a finite latin doubly-homogeneous quandle. Then $Q\cong \mathcal{Q}(\mathbb{Z}_p^n,f)$, where $f$ is a semiregular group automorphism and $C_{GL(n,p)}(f)$ is transitive over $\mathbb{Z}_p^n\setminus \{0\}$. 
%\end{theorem}
%\begin{proof}
%By Proposition \ref{doubly trans are principal} we know that $\dis(Q)$ is a nilpotent group. The action of $\aut{Q}_1$ is by group automorphism on $G$. Hence if $H$ is a characteristic subgroup of $\dis(Q)$, then the orbit of $H$ under $\aut{Q}_1$ is $H$ itself. Therefore, $\dis(Q)$ has no characteristic subgroups and so $\dis(Q)$ is elementary abelian.
%\end{proof}
%
 Let $M=\aff(\mathbb{Z}_p^m,f)$ be a strictly simple quandle.  Then its powers are given by
 $$M^n\cong \aff(\underbrace{\mathbb{Z}_p^{m}\times \ldots  \mathbb{Z}_p^m}_{n},\underbrace{f\times\ldots\times	f}_n).$$ 
The subquandles of $M^n$ are subspaces invariant under $f^{\times n}=f\times \ldots\times f$ and so they correspond to subrepresentations of the cyclic group generated by $f^{\times n}$. We can apply Maschke theorem for group representations to $M^n$, since the order of $f^{\times n}$ is coprime with $p$ and so its subrepresentation are direct product of irreducible representations, which are all isomorphic to $M$. Therefore every subquandle of $M^n$ is isomorphic to $M^k$ for some $k\leq n$.\\
On the other hand $M$ can be understood as $M=\aff(\mathbb{F}_{p^m},\lambda)$ and $M^n$ as $\aff(\mathbb{F}_{p^m}^n,\lambda I)$, where $\lambda I$ denote the scalar multiplication by $\lambda$. 
%
% Let $M=\Aff(\mathbb{Z}_p^m,f)$ be a strictly simple quandle, then $$M^n\cong \aff(\underbrace{\mathbb{Z}_p^{m}\times \ldots \mathbb{Z}_p^m}_{n},\underbrace{f\times\ldots\times	f}_n).$$ Its subquandles are subrepresentations (they are subgroups invariant under $f\times\ldots\times	f$). The order of $f$ is coprime with $p$, so according to Masckhe theorem \comment{add ref}, any subrepresentation is a direct sum of irreducible representations, and the irreducible subrepresentations of $M^n$ are all isomorphic to $M$. Therefore, subquandles of $M^n$ are isomorphic to $M^k$, for $k\leq n$.
\begin{remark}\label{SI}
In the next theorem we are using a well-know concept in universal algebra: every algebra embeds into a direct product of some of its subdirectly irreducible (SI) factors (an algebra is called subdirectly irreducible if the intersection of all the proper congruences of $A$ is non-trivial). \\
In particular if $Q=\aff(A,f)$ and $M,N$ are strictly simple subquandles of $Q$ containing $0$, then $M\bigcap N=\{0\}$ and so $\sim_M\wedge \sim_N=0_Q$. So, if $Q$ is finite and SI then there exists a unique strictly simple subquandle containing $0$.
\end{remark}

\begin{theorem}\label{doubly-hom are power of min}
Let $Q$ be a finite latin quandle. The following are equivalent:
\begin{itemize}
\item[(i)] $Q$ is a doubly homogeneous quandle. 
\item[(ii)] $Q\cong M_Q^k$ where $M_Q$ is the minimal quandle of $Q$.
\end{itemize}
\end{theorem}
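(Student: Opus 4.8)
The implication (ii) $\Rightarrow$ (i) is the easier direction, so I would dispatch it first. By Proposition \ref{min are 2trans} the minimal quandle $M_Q = \aff(\mathbb{Z}_p^m, f)$ is doubly homogeneous with $\aut{M_Q} \cong \mathbb{Z}_p^m \rtimes \mathbb{Z}_{p^m-1}$, and in the discussion preceding the theorem we observed $M_Q^k \cong \aff(\mathbb{F}_{p^m}^k, \lambda I)$ where $\lambda$ generates (or at least acts irreducibly through) the Singer cycle. Here $C_{\aut{A}}(\lambda I)$ contains $\mathrm{GL}_k(\mathbb{F}_{p^m})$, which acts transitively on nonzero vectors of $\mathbb{F}_{p^m}^k$; by Proposition \ref{Prop:Aut of principal} the stabilizer $\aut{M_Q^k}_0$ equals $C_{\aut{A}}(\lambda I)$, hence $\aut{M_Q^k}_0$ is transitive on $M_Q^k \setminus \{0\}$, which by the criterion recalled just before Proposition \ref{min are 2trans} means $M_Q^k$ is doubly homogeneous.

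For (i) $\Rightarrow$ (ii) I would argue as follows. By Proposition \ref{doubly-trans are nilp}, a finite latin doubly homogeneous $Q$ is nilpotent of prime power size $p^N$, with minimal quandle $M_Q = \aff(\mathbb{Z}_p^m,f)$; moreover $Q$ is principal latin by the nilpotent-CDSg discussion (or directly: latin nilpotent quandles are affine when abelian, and one reduces via the lower central series), so I may assume $Q = \aff(A, g)$ with $A$ an abelian $p$-group and $g$ acting with $\mathrm{Fix}(g) = 0$. The strategy is to show $A$ is elementary abelian of exponent $p$ and that $g$ is semisimple with all irreducible summands isomorphic to $f$; Maschke's theorem (as invoked in the paragraph before the theorem, using that the order of $g$ is coprime to $p$ once we know $p \nmid |g|$) then gives $A \cong (\mathbb{Z}_p^m)^k$ with $g \cong f^{\times k}$, i.e. $Q \cong M_Q^k$. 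To pin down the exponent and the semisimplicity I would exploit double homogeneity together with Lemma \ref{2-gen sub}: every $2$-generated subquandle of $Q$ and of every factor $Q/\alpha$ is isomorphic to the fixed strictly simple $M_Q$. If $A$ had exponent $>p$, consider the subquandle $S$ generated by $0$ and an element of order $p^2$; by Proposition \ref{subquandle with zero are subgroups} $S$ is a coset of an $f$-invariant subgroup containing an element of order $p^2$, so $S$ is not elementary abelian and in particular not isomorphic to $M_Q$ — contradiction. Similarly, if $g$ were not semisimple, pass to a subquotient on which $g$ acts by a single nontrivial Jordan block of size $\geq 2$ over $\mathbb{F}_p$: this realizes an affine quandle on $\mathbb{Z}_p^2$ as a $2$-generated subquandle of some $Q/\alpha$ that is not isomorphic to $M_Q$ (it is not even simple), again contradicting Lemma \ref{2-gen sub}.

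Once $A$ is elementary abelian and $g$ is semisimple, write $A = \bigoplus_i A_i$ as a direct sum of $\langle g\rangle$-irreducible $\mathbb{F}_p[t,t^{-1}]$-submodules; each $A_i$ is a strictly simple subquandle of $Q$ containing $0$, hence isomorphic to $M_Q$ by Lemma \ref{2-gen sub} (take two elements of $A_i$), so each $g|_{A_i}$ is conjugate in $\aut{A_i}$ to $f$ by the isomorphism theorem of \cite{Nel}. Choosing compatible bases identifies $(A,g)$ with $((\mathbb{Z}_p^m)^k, f^{\times k})$, whence $Q \cong \aff((\mathbb{Z}_p^m)^k, f^{\times k}) \cong M_Q^k$.

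The main obstacle I anticipate is the semisimplicity step: ruling out a nontrivial Jordan block requires producing an explicit subquotient congruence of $Q$ whose $2$-generated subquandles are genuinely non-simple, and checking that such a subquotient actually occurs — this uses that the module structure of $A$ over $\mathbb{F}_p[t,t^{-1}]$ (once the exponent is $p$) forces a non-split extension to appear as a subfactor, which one must phrase carefully in quandle-theoretic language via Theorem \ref{congruence for principal} and Proposition \ref{subquandle with zero are subgroups}. Everything else (exponent $p$, assembling the pieces by Maschke, the isomorphism type of each summand) is routine given the machinery already established.
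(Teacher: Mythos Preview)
Your (ii) $\Rightarrow$ (i) is essentially the paper's argument.

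For (i) $\Rightarrow$ (ii) there is a genuine gap at the step ``so I may assume $Q = \aff(A,g)$''. Neither of your justifications works: the ``nilpotent--CDSg discussion'' (the Proposition following Proposition~\ref{latin CDSg}) takes CDSg as a \emph{hypothesis}, which you have not established, and ``one reduces via the lower central series'' is not an argument for abelianness of $\dis(Q)$. The paper supplies the missing idea in two strokes. First, since $Q$ is nilpotent, $0_Q \neq \zeta_Q \leq \sigma_Q$; the $\sigma_Q$-classes are $\aut{Q}$-blocks (Proposition~\ref{semiregular decomposition}), and a doubly transitive action is primitive, so $\sigma_Q = 1_Q$ and $Q$ is principal. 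Second, writing $Q = \Q(G,f)$, Proposition~\ref{Prop:Aut of principal} gives $\aut{Q}_1 = C_{\aut{G}}(f)$ transitive on $G\setminus\{1\}$, so $G$ has no proper characteristic subgroup; being nilpotent, $G$ is therefore elementary abelian. This simultaneously disposes of your separate ``exponent $p$'' step.

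Once $Q$ is affine over an elementary abelian $p$-group, your finish and the paper's diverge but are both valid. The paper argues via subdirect irreducibility: every SI factor of $Q$ is $M_Q$ (Remark~\ref{SI} plus Lemma~\ref{2-gen sub}), so $Q$ embeds in some $M_Q^n$ and hence, by the Maschke paragraph preceding the theorem, $Q \cong M_Q^k$. Your route is a direct module analysis, and in fact the semisimplicity step you flag as the main obstacle is easier than you fear: since $Sg(0,a) = \mathbb{F}_p[g]\cdot a \cong M_Q$ for every $a\neq 0$, the minimal polynomial of $g$ on every nonzero vector is the fixed irreducible polynomial $p(t)$ defining $M_Q$, whence $p(g) = 0$ globally; thus $g$ is semisimple with all irreducible summands isomorphic, and no Jordan-block subquotient hunt is needed.
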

\begin{proof}
(i) $\Rightarrow$ (ii) First we show that $Q$ is principal. According to Proposition \ref{doubly-trans are nilp}, $Q$ is nilpotent and so $0_Q\neq \zeta_Q\leq \sigma_Q$ and the classes of $\sigma_Q$ are blocks with respect to the action of $\aut{Q}$. The automorphism group of $Q$ is doubly-transitive and then primitive, so $\sigma_Q=1_Q$, i.e. $Q$ is principal and latin. \\
Let $\Q(G,f)$ be a principal representation of $Q$. Since $\aut{Q}_1=C_{\aut{G}}(f)$ is transitive on $G\setminus \{1\}$ then $G$ is elementary abelian (it is nilpotent and it has no proper characteristic subgroups) and so $Q$ and all its factors are affine.\\
Let $Q/\alpha$ be a subdirectly irreducible factor of $Q$. According to Lemma \ref{2-gen sub} every pair of elements of $Q/\alpha$ generates a strictly simple subquandle isomorphic to $M_Q$. By Remark \ref{SI} every pair of elements $0,a\in Q/\alpha$ generates the same strictly simple subquandles. Therefore $Q/\alpha\cong	M_Q$. \\
The quandle $Q$ embeds into a product of some of its subdirectly irreducible factors, thus $Q$ embeds into $M_Q^n$ for some $n\in \mathbb{N}$ and therefore $Q\cong M_Q^k$ for some $k\leq n$.

(ii) $\Rightarrow$ (i) Let $q=p^n$. Up to isomorphism, there exists $\lambda\in \mathbb{F}_{q}^*$ such that $M_Q\cong \aff(\mathbb{F}_{q},\lambda)$ and $Q\cong \aff(\mathbb{F}_q^n,\lambda I)$. Then $\GL{n}{\mathbb{F}_{q}}\leq C_{\GL{nm}{p}}(\lambda I)=\aut{Q}_0$. Therefore, $\aut{Q}_0$ is transitive over $Q\setminus \{0\}$.
\end{proof}
%
%
%Note that any finite doubly-homogeneous quandle $Q$ satifies the same set of identity of its minimal quandle, since $Q\in \mathcal{V}(M_Q)$. 
%
%
\section{Cyclic Quandles}
\subsection{Extensions of strictly simple quandles by projection quandles} \label{sec_extensions}

%\subsection*{Congruence lattice}
 
Let $Q$ be a connected quandle and $\alpha\in Con(Q)$. We say that $Q$ a {\it connected extension} of $Q/\alpha$ by $[a]_\alpha$. We study a particular class of extensions in the same direction of \cite{Claw}, in which extensions of affine quandles by projection quandles of size $2$ has been investigated. Our focus is on extensions of strictly simple quandles by projection quandles of prime size.
%
%This is a technical section which will be useful in the last section on the classification of connected cyclic quandles  (these results are collected here also for future reference).
%
%
We first shows the properties of the congruence lattice of such extensions using the Galois connection between the congruence lattice of a quandle and the lattice of normal subgroups of the left multiplication group contained in the displacement group (denoted by $Norm(Q)$). Recall that for faithful quandles a congruence $\alpha$ is abelian (resp. central) if and only if $\dis_\alpha$ is abelian (resp. central), \cite[Corollary 5.4]{CP}. Moreover if $\alpha$ is a minimal congruence then $\dis_{\alpha}$ is a minimal element of $Norm(Q)$: indeed if $N< \dis_\alpha$ then $\mathcal{O}_N<\alpha$ and so $\mathcal{O}_N=0_Q$ and accordingly $N=1$. \\
%In particular if $\alpha$ is a minimal congruence $\dis_{\alpha}$ is a minimal element of $Norm(Q)$. Indeed if $N<\dis_{\alpha}$ then the congruence $\beta$ given by the orbit decomposition with respect to the action of $N$ are strictly contained in the blocks of $\alpha$, otherwise $\dis_\alpha\leq N$. Therefore $\beta=0_Q$ and $N=1$. 
Recall that $Q$ is connected and $Q/\alpha$ is generated by $\setof{[a_i]}{1\leq i\leq n}$ then $Q$ is generated by $a_1$ together with the blocks of $a_2,\ldots,a_n$ \cite[Lemma 6.1]{GB}.
\begin{lemma}\label{abelianness of proj blocks and ss factor}
	Let $Q$ be a finite faithful connected extension of a strictly simple quandle $Q/\alpha$ by a projection quandle of prime power size. If $\alpha$ is a minimal congruence then $\alpha$ is abelian. \end{lemma}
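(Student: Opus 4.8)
The plan is to reduce the claim to showing that the relative displacement group $\dis_\alpha$ is abelian, and then to play off the projection structure of the blocks against the affine structure of $Q/\alpha$. Since $Q$ is faithful, a congruence is abelian exactly when its relative displacement group is abelian (\cite[Corollary 5.4]{CP}), so it suffices to prove that $N:=\dis_\alpha$ is abelian. By the remark preceding the lemma, minimality of $\alpha$ makes $N$ a minimal element of $Norm(Q)$; being normal in $\lmlt(Q)$ and contained in $\dis(Q)$ it is a minimal normal subgroup of the finite group $\lmlt(Q)$, hence $N\cong S^m$ for a simple group $S$ whose factors are permuted transitively under conjugation. Moreover the orbit congruence $\mathcal{O}_N$ satisfies $0_Q\ne\mathcal{O}_N\le\alpha$ — it is non-trivial because $N$ acts faithfully on $Q$, and it refines $\alpha$ because $N\le\dis^\alpha$ fixes every block set-wise — so minimality of $\alpha$ forces $\mathcal{O}_N=\alpha$, i.e. $N$ is transitive on each block $B$; write $|B|=q^k$ with $q$ prime (if $q^k=1$ the statement is vacuous).

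I would then assemble two observations. Since $Q/\alpha$ is strictly simple it is affine over $\mathbb{Z}_p^n$ and $\dis(Q/\alpha)\cong\mathbb{Z}_p^n$ acts regularly on the $p^n$ blocks; therefore every block-stabilizer $\dis(Q)_{[a]}=\pi_\alpha^{-1}(\dis(Q/\alpha)_{[a]})$ equals $\dis^\alpha$, so $\dis^\alpha$ itself is transitive on each block. Let $C$ be the point-wise stabilizer of $B$ in $\dis(Q)$; as $C\le\dis(Q)_{[a]}=\dis^\alpha$, the quotient $\dis^\alpha/C$ acts faithfully and transitively on $B$. On the other hand the blocks are projection quandles, so $L_a$ fixes $B$ point-wise, whence for $b\in B$ we get $L_aL_bL_a^{-1}=L_{L_a(b)}=L_b$; thus $\{L_x:x\in B\}$ generate an abelian subgroup and in particular $D_B:=\langle L_xL_y^{-1}:x,y\in B\rangle$ is abelian, while every $L_xL_y^{-1}$ with $x,y\in B$ fixes $B$ point-wise, so $D_B\le C$. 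Faithfulness of $Q$ forces $D_B\ne 1$, hence $N\cap C\ne 1$; and $N\cap C$ is the kernel of the transitive action of $N$ on $B$, so it is a proper normal subgroup of $N=S^m$ — a product of, say, $c$ of the $S$-factors, with $1\le c\le m-1$.

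If $S$ is abelian, then $N/(N\cap C)$ is an abelian group acting faithfully and transitively on the $q^k$-element set $B$, hence regular; so $|N/(N\cap C)|=q^k$, and since $N$ is a minimal normal subgroup this forces $S\cong\mathbb{Z}_q$ and $N$ elementary abelian, so $\alpha$ is abelian, as wanted. The remaining task, and the core of the argument, is to rule out the possibility that $S$ is nonabelian. In that case $N\cap C$ is a \emph{non-trivial proper} normal subgroup of $S^m$, so $m\ge 2$, and the same holds for the point-wise stabilizer of every block; these stabilizers are $\lmlt(Q)$-conjugate and their total intersection is trivial (faithfulness), so the assignment sending a block $B'$ to the set of simple factors of $N$ acting trivially on $B'$ is a non-constant $\lmlt(Q)$-equivariant map from the $p^n$ blocks to the proper non-empty subsets of $\{1,\dots,m\}$, where $\lmlt(Q)$ acts on blocks through $\mathbb{Z}_p^n\rtimes\langle f\rangle$ with $f$ of order prime to $p$. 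I would run this equivariance against the fact that each $N/(N\cap C)\cong S^{m-c}$ acts faithfully and transitively on a set of size a power of $q$ — pinning down the orders of $N\cap C$ and of the point-stabilizers of $S^{m-c}$ in terms of $q$ — and aim to derive a contradiction (for instance, forcing $\dis^\alpha$, and hence the minimal normal subgroup $N$, to be a $q$-group, which would already contradict $S$ nonabelian). This interplay between the combinatorics of the affine block action and the prime-power block size is the main obstacle; everything preceding it is routine.
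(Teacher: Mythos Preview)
Your reduction to showing $N=\dis_\alpha$ is abelian and your identification of the subgroups $D_B=\langle L_xL_y^{-1}:x,y\in B\rangle$ are exactly the right start, but the nonabelian-$S$ case is a genuine gap: the equivariant block-to-factor map you describe does exist, yet the sketch of how to extract a contradiction from it is too vague to be an argument, and it is not clear that the combinatorics you allude to actually pins down the simple type of $S$. The paper sidesteps the $S^m$ dichotomy entirely by proving directly that $N$ is \emph{nilpotent}; minimality of $N$ in $Norm(Q)$ then forces $[N,N]=1$.

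The ingredient you are missing is that each $D_B$ is not merely abelian but a $p$-group, where $|B|=p^k$. Since $D_B$ fixes $B$ pointwise and $Q=Sg(B,b')$ for any $b'\notin B$ (because $Q/\alpha$ is strictly simple, so any two blocks generate $Q/\alpha$), the action of $D_B$ on every other block $B'$ is semiregular; hence $|D_B|$ divides $|B'|=p^k$. Next, each $D_B$ is normal in $\dis^\alpha$ (conjugation by a block-preserving element permutes its generators), and $D_B\cap D_{B'}=1$ for $B\neq B'$ since an element of the intersection fixes $B\cup B'$ pointwise and therefore all of $Q=Sg(B,b')$; consequently $[D_B,D_{B'}]\le D_B\cap D_{B'}=1$. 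Thus $N=\langle D_B:B\in Q/\alpha\rangle$ is generated by pairwise commuting $p$-subgroups, hence is itself a $p$-group and in particular nilpotent. Its derived subgroup is then a proper element of $Norm(Q)$ strictly below $N$, so minimality gives $[N,N]=1$. This argument is short and requires no case analysis on the simple type of a minimal normal subgroup.
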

\begin{proof}
	%\comment{Maybe use this argument if 4.2 of \cite{principal} to avoid the condition $\dis(Q)_a,b=1$. Then I could freely use}
	To prove that $\alpha$ is abelian we just need to show that $\dis_\alpha$ is abelian, since $Q$ is faithful. The subgroup $\dis_\alpha$ is generated by the family of non-trivial subgroups $K_{[a]}=\langle \setof{L_b L_c^{-1}}{b,c\in [a]}\rangle$ for $[a]\in Q/\alpha$, which are all normal in $\dis^\alpha$. The length of the orbits of $K_{[a]}$ divides the length of the orbits of $\dis^\alpha$ which is $|[a]|$. The subgroup $K_{[a]}$ acts trivially on $[a]$ and since $Q=Sg([a],b)$ whenever $b\notin [a]$, then it acts semiregularly on the block $[b]$ and then $|K_{[a]}|$ divides the size of the blocks and so it is a $p$-group. Since $K_{[a]}\bigcap K_{[b]}=1$, hence $[K_{[a]},K_{[b]}]=1$. So $\dis_\alpha$ is generated by a family of commuting $p$-subgroups and so it is nilpotent. The derived subgroup of $\dis_\alpha$ is in $ Norm(Q)$ and it is a proper subgroup of $\dis_\alpha$. The congruence $\alpha$ is minimal, therefore the derived subgroup of $\dis_\alpha$ is trivial.
\end{proof}

We will denote the first congruence $\gamma_1(Q)$ of the lower central series of $Q$ simply by $\gamma_Q$.
\begin{lemma}\label{then 3 chain}
Let $Q$ be a finite connected extension of a strictly simple quandle $Q/\alpha$ by a prime size projection quandle. Then $\alpha=\gamma_Q$ and it is the unique proper congruence of $Q$. If $Q$ is faithful $\zeta_Q=0_Q$.
\end{lemma}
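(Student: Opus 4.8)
The plan is to show first that $\alpha$ is the unique proper congruence, then identify it with $\gamma_Q$, and finally compute the center in the faithful case. For uniqueness: $Q/\alpha$ is strictly simple, hence simple, so $\alpha$ is a coatom of $\mathrm{Con}(Q)$. On the other side, the blocks $[a]_\alpha$ are projection quandles of prime size $p$, so they have no proper subquandles and in particular $\dis([a]_\alpha)$ acts on $[a]_\alpha$ as a cyclic group of prime order; this forces $\mathrm{Con}([a]_\alpha)=\{0,1\}$ as well. Now suppose $\beta\in\mathrm{Con}(Q)$ is proper with $\beta\neq\alpha$. If $\beta<\alpha$ then restricting to a block gives a proper congruence of $[a]_\alpha$ unless $\beta=0_Q$; since each block is simple, $\beta=0_Q$. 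If $\beta\not\leq\alpha$, then $\beta\vee\alpha=1_Q$ (as $\alpha$ is a coatom) and $\beta\wedge\alpha<\alpha$, so by the previous case $\beta\wedge\alpha=0_Q$; but then by Lemma \ref{block stab} applied to $\beta$ one gets that $[a]_\beta$ meets each $\alpha$-block trivially, so $|[a]_\beta|$ divides $|Q/\alpha|=p^n$ and is coprime to $p=|[a]_\alpha|$, whence $[a]_\beta=\{a\}$, i.e. $\beta=0_Q$, again a proper congruence only in the trivial way. Hence $\alpha$ is the only proper congruence.

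Next, $\alpha=\gamma_Q$. Since $\alpha$ is minimal among nonzero congruences (it is the unique proper one and it is nonzero because the blocks are nontrivial), Lemma \ref{abelianness of proj blocks and ss factor} gives that $\alpha$ is abelian. The factor $Q/\alpha$ is strictly simple, hence by Theorem \ref{min are affine} it is affine $\cong\aff(\mathbb Z_p^m,f)$ with $f$ irreducible; in particular $Q/\alpha$ is not abelian (a nontrivial simple affine quandle of size $>1$ is not abelian as a quandle — its displacement group $\mathbb Z_p^m$ is nontrivial and acts nontrivially, so $Q/\alpha$ is not a projection quandle, hence not abelian). Therefore $\gamma_Q\neq 0_Q$: if $Q$ were abelian then so would be its factor $Q/\alpha$, contradiction. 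So $\gamma_Q$ is a nonzero proper congruence (proper because $Q$ is solvable, being an extension of a solvable — indeed affine — quandle by an abelian one, hence not equal to $1_Q$ unless $Q$ is trivial), and by uniqueness $\gamma_Q=\alpha$.

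Finally, the center in the faithful case. We have $\zeta_Q\leq\sigma_Q$ by \cite[Proposition 5.9]{CP} as recalled in the excerpt, and for a faithful connected quandle $\zeta_Q$ is a central (hence abelian) congruence. Since $Q/\alpha$ is strictly simple, it is centerless: its displacement group is the elementary abelian group $\mathbb Z_p^m$ on which $f$ acts irreducibly and fixed-point-freely, so $Z(\lmlt(Q/\alpha))=1$ and $\zeta_{Q/\alpha}=0$. If $\zeta_Q\neq 0_Q$ then $\zeta_Q=\alpha$ by uniqueness, but $\alpha$ is not central: centrality of $\alpha$ would force, via the canonical projection $\pi_\alpha$, that $\dis_\alpha\leq Z(\dis(Q))$ and hence that the quotient action is that of a centerless situation yet with $\alpha$ contributing a central congruence whose factor $Q/\alpha$ must then be abelian (a quandle whose top congruence is central is abelian), contradicting that $Q/\alpha$ is a nonabelian simple quandle. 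Hence $\zeta_Q=0_Q$.

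The main obstacle I expect is the ``$\beta\not\leq\alpha$'' case in the uniqueness argument: one must carefully combine the coatom property of $\alpha$, the connectedness criterion of Lemma \ref{block stab}, and the coprimality of block sizes to rule out a transversal congruence. The identification of the nonabelianness of the strictly simple factor (used twice) is routine once one invokes Theorem \ref{min are affine} and that irreducible $f$ acts without nonzero fixed points.
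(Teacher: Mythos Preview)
Your proof has several genuine errors.

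First, the claim that $\mathrm{Con}([a]_\alpha)=\{0,1\}$ is false: every equivalence relation on a projection quandle is a congruence, so $\mathrm{Con}(\mathcal{P}_p)$ is the full partition lattice. The minimality of $\alpha$ follows instead from connectedness of $Q$: all blocks of any congruence $\beta\leq\alpha$ have the same size, and this size must divide the prime $p$.

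Second, your treatment of the case $\beta\not\leq\alpha$ does not work. You conflate two potentially different primes (the block size $p$ and the prime whose power is $|Q/\alpha|$; their coprimality is Corollary~\ref{coprime size}, which \emph{uses} this lemma), and the divisibility and coprimality claims are unjustified. The paper's argument is quite different: from $\alpha\wedge\beta=0_Q$ one embeds $Q$ into $Q/\alpha\times Q/\beta$; the image of $[a]_\beta$ in $Q/\alpha$ is a subquandle, and since $Q/\alpha$ is strictly simple this image is all of $Q/\alpha$. Counting sizes gives $Q\cong Q/\alpha\times Q/\beta$, whence $Q/\beta\cong [a]_\alpha$ is a projection quandle---but factors of connected quandles are connected, contradiction.

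Third, and most seriously, your claim that $Q/\alpha$ is ``not abelian'' is wrong: strictly simple quandles are affine and hence abelian in the commutator-theoretic sense used throughout the paper (this is exactly Theorem~\ref{minimal iff}(i)$\Leftrightarrow$(ii)). You appear to identify ``abelian'' with ``projection,'' which is incorrect. Consequently your argument for $\gamma_Q\neq 0_Q$ collapses. The correct reasoning is on the other side: $Q$ contains projection subquandles, so $Q$ is not latin and hence not affine; since connected abelian quandles are affine, $Q$ is not abelian and $\gamma_Q\neq 0_Q$. Combined with $\gamma_Q\leq\alpha$ (as $Q/\alpha$ is abelian) and minimality, $\gamma_Q=\alpha$.

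Finally, for $\zeta_Q=0_Q$ the paper's argument is cleaner than yours: in a faithful quandle the blocks of any central congruence are connected \cite[Corollary 3.2]{GB}, but $[a]_\alpha$ is projection of size $p>1$, hence disconnected; so $\alpha$ is not central, and by uniqueness $\zeta_Q=0_Q$.
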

\begin{proof}
	The congruence $\alpha$ is a minimal congruence of $Q$ since its blocks have prime size and it is maximal since the factor $Q/\alpha$ is simple. The quandle $Q$ is not affine since it contains projection subquandles. The factor $Q/\alpha$ is abelian so $\gamma_Q\leq \alpha$ and then $\alpha=\gamma_Q$. Let $\beta\neq \alpha$ be a congruence of $Q$. Then $\alpha\wedge \beta=0_Q$ and $Q$ embeds into $Q/\alpha\times Q/\beta$. The factor $Q/\alpha$ has no proper subquandle, so $[a]_\beta$ projects onto $Q/\alpha$. Therefore $|Q|=|Q/\beta||[a]_\beta|\geq |Q/\beta||Q/\alpha|$ and accordingly $Q\cong Q/\alpha\times Q/\beta$ and $Q/\beta\cong [a]_\alpha$ is not connected, contradiction. Hence $\alpha$ is the unique proper congruence of $Q$. If $Q$ is faithful, the blocks of central congruences are connected according to \cite[Corollary 3.2]{GB}. Then $\alpha$ is not central and so $\zeta_Q=0_Q$.
\end{proof}

The properties of the congruence lattice influence the structure of the displacement group and its subgroups.

\begin{corollary}\label{coprime size} Let $Q$ be a finite faithful connected extension of the strictly simple quandle $Q/\gamma_Q$ by a projection quandle of prime size $p$. Then
$|Q/\gamma_Q|$ and $p$ are coprime and $Z(\dis(Q))=1$.
\end{corollary}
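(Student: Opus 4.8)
The plan is to unpack the structural consequences of Lemmas~\ref{abelianness of proj blocks and ss factor} and \ref{then 3 chain}, then prove the coprimality statement by a nilpotency argument, and finally deduce $Z(\dis(Q))=1$ from the dichotomy for $\sigma_Q$ together with the description of $Con(Q)$.

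\emph{Set-up.} Write $|Q/\gamma_Q|=q^{n}$ with $q$ prime, so by Theorem~\ref{min are affine} we have $Q/\gamma_Q\cong\aff(\mathbb{F}_{q^{n}},\lambda)$ and $\dis(Q/\gamma_Q)\cong\mathbb{Z}_{q}^{n}$ acts \emph{regularly} on $Q/\gamma_Q$; hence $\dis^{\gamma_Q}=\ker\pi_{\gamma_Q}=\dis(Q)_{[a]_{\gamma_Q}}$ for every $a$ and $\dis(Q)/\dis^{\gamma_Q}\cong\mathbb{Z}_{q}^{n}$. By Lemma~\ref{then 3 chain}, $\gamma_Q$ is the unique proper congruence, so $Con(Q)$ is the three--element chain $0_Q<\gamma_Q<1_Q$; moreover $\zeta_Q=0_Q$ and $\gamma_Q$ is not central. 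By Lemma~\ref{abelianness of proj blocks and ss factor} and its proof, $\dis_{\gamma_Q}$ is abelian and generated by elements of order dividing $p$, hence it is an elementary abelian $p$-group, nontrivial because $Q$ is faithful and $\gamma_Q\ne 0_Q$. Since $\gamma_Q$ is a minimal congruence, $\dis_{\gamma_Q}$ is a minimal element of $Norm(Q)$ (preamble of Section~\ref{sec_extensions}) and therefore a minimal normal subgroup of $\lmlt(Q)$, so $\lmlt(Q)$ acts irreducibly on $\dis_{\gamma_Q}$ by conjugation; also $\dis_{\gamma_Q}$ is transitive on each block $B=[a]_{\gamma_Q}$, since $\mathcal{O}_{\dis_{\gamma_Q}}$ is a congruence contained in $\gamma_Q$ and different from $0_Q$, hence equal to $\gamma_Q$. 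Finally note that $Q$ is neither affine (it has projection subquandles, while a finite connected affine quandle has none) nor principal (a finite faithful principal quandle is latin by Corollary~\ref{principal}, whereas $Q$ is not, having a projection block of size $p\ge 2$), and $Q$ is not semiregular.

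\emph{Coprimality.} Suppose $p\mid q^{n}$, i.e.\ $q=p$; I aim at a contradiction by showing $\dis(Q)$ is then a $p$-group. As $\dis(Q)/\dis^{\gamma_Q}\cong\mathbb{Z}_{p}^{n}$, it suffices to prove $\dis^{\gamma_Q}$ is a $p$-group. Now $\dis^{\gamma_Q}$ acts transitively on each block $B$ (of size $p$) and normalises the regular cyclic subgroup $\dis_{\gamma_Q}|_{B}\cong\mathbb{Z}_p$, so its image in $\mathrm{Sym}(B)$ lies in $N_{\mathrm{Sym}(B)}(\mathbb{Z}_p)\cong\mathbb{Z}_p\rtimes\mathbb{Z}_{p-1}$; one shows that the $p'$-part of this image is trivial for every $B$ --- equivalently $\dis(Q)_{x}=\dis(Q)_{y}$ whenever $x\,\gamma_Q\,y$, i.e.\ $\sigma_Q=\gamma_Q$ --- by ruling out $\sigma_Q=0_Q$ under the hypothesis $q=p$ (this is the step where the counting forced by connectedness, together with the fact that $\dis_{\gamma_Q}$ is minimal normal in $\lmlt(Q)$, is used). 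Since $\bigcap_{B}\ker\!\big(\dis^{\gamma_Q}\to\mathrm{Sym}(B)\big)=\bigcap_{x\in Q}\dis(Q)_{x}=1$, it follows that $\dis^{\gamma_Q}$ embeds into $\prod_{B}\mathbb{Z}_p$, hence is a $p$-group; therefore $\dis(Q)$ is a finite $p$-group, so it is nilpotent, and $Q$ is a nilpotent quandle by \cite[Theorem 1.2]{CP}. As $|Q|=pq^{n}>1$ this gives $\zeta_Q\ne 0_Q$, contradicting Lemma~\ref{then 3 chain}. Hence $q\ne p$, i.e.\ $|Q/\gamma_Q|$ and $p$ are coprime.

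\emph{Triviality of the centre.} The sets $\sigma_Q$, $\c{Z(\dis(Q))}$ and $\mathcal{O}_{Z(\dis(Q))}$ are congruences of $Q$ (note $Z(\dis(Q))$ is characteristic in $\dis(Q)\trianglelefteq\lmlt(Q)$, hence lies in $Norm(Q)$), so each lies in $\{0_Q,\gamma_Q,1_Q\}$. First $\sigma_Q\ne 1_Q$ ($\sigma_Q=1_Q$ makes $Q$ semiregular hence principal), $\c{Z(\dis(Q))}\ne 1_Q$ ($\c{Z(\dis(Q))}=1_Q$ forces $\dis(Q)$ abelian, hence $Q$ abelian by faithfulness, hence affine), and $\mathcal{O}_{Z(\dis(Q))}\ne 1_Q$ ($\mathcal{O}_{Z(\dis(Q))}=1_Q$ makes $Z(\dis(Q))$ a regular $\langle L_a\rangle$-invariant abelian subgroup of $\aut{Q}$, hence $Q$ principal by Proposition~\ref{principal and regular}) --- all excluded above. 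Next, $\c{Z(\dis(Q))}\ne\gamma_Q$: otherwise $\dis_{\gamma_Q}=\dis_{\c{Z(\dis(Q))}}\le Z(\dis(Q))$ would make $\gamma_Q$ central. Hence $\c{Z(\dis(Q))}=0_Q$ and, since $\c{N}\le\mathcal{O}_N$ always, $\mathcal{O}_{Z(\dis(Q))}\in\{0_Q,\gamma_Q\}$. If $\sigma_Q=0_Q$ we are done: then $Z(\dis(Q))=1$ (as recorded after Proposition~\ref{semiregular decomposition}). If $\sigma_Q=\gamma_Q$, then by the argument above $q\ne p$ and $\dis^{\gamma_Q}$ is an (abelian, elementary abelian $p$-)group which is a normal Sylow $p$-subgroup of $\dis(Q)$, so $\dis(Q)=\dis^{\gamma_Q}\rtimes C$ with $C\cong\mathbb{Z}_{q}^{n}$; one checks $C_{\dis(Q)}(\dis^{\gamma_Q})=\dis^{\gamma_Q}$ (else it is a member of $Norm(Q)$ strictly between $\dis^{\gamma_Q}$ and $\dis(Q)$, impossible since $\c{}$ of it must be $\gamma_Q$ and $\mathcal{O}$ of it must be $\gamma_Q$ while its image in $\mathbb{Z}_q^n$ would be nontrivial), so $Z(\dis(Q))\le\dis^{\gamma_Q}$ equals the $C$-fixed subgroup of $\dis^{\gamma_Q}$; as $Z(\dis(Q))\cap\dis_{\gamma_Q}$ is a normal subgroup of $\lmlt(Q)$ inside the minimal member $\dis_{\gamma_Q}$ of $Norm(Q)$ it is $1$ or $\dis_{\gamma_Q}$, the latter again making $\gamma_Q$ central, so it is $1$, and combining this with Maschke (applicable since $q\ne p$) and the irreducibility of the $\lmlt(Q)$-action on $\dis_{\gamma_Q}$ forces $Z(\dis(Q))=1$.

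The main obstacle is the two bookkeeping steps flagged above: proving $\sigma_Q\ne 0_Q$ when $q=p$ (equivalently, that $\dis^{\gamma_Q}$ acts on each block as a $p$-group), and, in the case $\sigma_Q=\gamma_Q$, killing the $C$-fixed part of $\dis^{\gamma_Q}$. Both rely crucially on the blocks being projection subquandles of \emph{prime} size together with the fact that $\dis_{\gamma_Q}$ is a minimal normal subgroup of $\lmlt(Q)$; the rest of the argument is formal manipulation of the Galois correspondence $(\dis_{(\cdot)},\c{})$ and the three congruences of $Q$.
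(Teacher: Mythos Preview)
Your proposal has a genuine gap in the coprimality argument, and the whole approach is far more intricate than needed.

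\textbf{The gap.} In the coprimality step you assume $q=p$ and want $\dis^{\gamma_Q}$ to be a $p$-group; for this you assert that ``one shows that the $p'$-part of this image is trivial for every $B$ --- equivalently $\sigma_Q=\gamma_Q$ --- by ruling out $\sigma_Q=0_Q$ under the hypothesis $q=p$''. But you give no argument for why $q=p$ forces $\sigma_Q\ne 0_Q$, and none is available from what precedes: the later Lemma~\ref{dis_a,b=1 equivalent} shows only that $\sigma_Q\in\{0_Q,\gamma_Q\}$, with no dependence on the primes involved. Calling this ``bookkeeping'' understates the problem; it is the heart of your coprimality proof and it is simply missing. (Incidentally, you also claim that $\sigma_Q$ is a congruence; the paper only establishes that it is an equivalence whose classes are $\aut{Q}$-blocks.)

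\textbf{What the paper does instead.} The paper's proof is a two-line application of results you did not invoke. First, \cite[Lemma 5.12]{CP} gives $\mathcal{O}_{Z(\dis(Q))}\le\zeta_Q$; since $\zeta_Q=0_Q$ by Lemma~\ref{then 3 chain}, $Z(\dis(Q))$ fixes every point and, being a subgroup of $\aut{Q}$, is trivial. This disposes of $Z(\dis(Q))=1$ directly, with no case split on $\sigma_Q$ and no Maschke-type argument. Second, \cite[Proposition 6.5]{CP} says that a finite connected quandle of prime power size is nilpotent, hence has $\zeta_Q\ne 0_Q$; since here $\zeta_Q=0_Q$, the size $|Q|=p\cdot q^{n}$ cannot be a prime power, so $p\ne q$. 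Thus both conclusions follow immediately from $\zeta_Q=0_Q$ together with two cited facts. Your structural analysis of $\dis^{\gamma_Q}$, the semidirect decomposition, and the case analysis on $\sigma_Q$ are all unnecessary for this corollary (they are closer in spirit to the arguments used later, in Proposition~\ref{proj sub then Ab} and Lemma~\ref{dis_a,b=1 equivalent}).
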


\begin{proof}
Connected quandles of prime power size are nilpotent (\cite[Proposition 6.5]{CP}) and the orbits of $Z(\dis(Q))$ are contained in the block of $\zeta_Q$ (\cite[Lemma 5.12]{CP}). According to Lemma \ref{then 3 chain} $\zeta_Q=0_Q$ and then $Z(\dis(Q))=1$. So $|Q|$ is not a power of a prime and then $p$ is coprime with $|Q/\gamma_Q|$. 
\end{proof}

\begin{lemma}\label{cyclic then central}
	Let $Q$ be a faithful connected quandle and $\alpha\in Con(Q)$. If $\dis_\alpha$ is cyclic, then $\alpha$ is central.
\end{lemma}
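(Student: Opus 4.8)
The plan is to exploit that $\dis_\alpha$, being a cyclic normal subgroup of $\lmlt(Q)$, is acted on by $\lmlt(Q)$ (via conjugation) inside the \emph{abelian} group $\aut{\dis_\alpha}$, and then to use connectedness of $Q$ to show that this action is trivial on the whole of $\dis(Q)$, whence $\dis_\alpha\leq Z(\dis(Q))$; faithfulness will then give centrality of $\alpha$ through the known characterisation of central congruences.

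First I would record that for a connected quandle $\dis(Q)=\lmlt(Q)'$. The inclusion $\lmlt(Q)'\leq\dis(Q)$ is immediate, since $\lmlt(Q)/\dis(Q)=\langle\overline{L_a}\rangle$ is cyclic. For the reverse inclusion, take $x,y\in Q$; by connectedness there is $g\in\dis(Q)$ with $g(y)=x$, and then $L_x=L_{g(y)}=gL_yg^{-1}$ (using $L_{h(a)}=hL_ah^{-1}$), so $L_xL_y^{-1}=gL_yg^{-1}L_y^{-1}$ is a commutator of $\lmlt(Q)$; as such elements generate $\dis(Q)$, we get $\dis(Q)\leq\lmlt(Q)'$.

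Next I would consider the homomorphism $\varphi\colon\lmlt(Q)\to\aut{\dis_\alpha}$ induced by conjugation, which is well defined because $\dis_\alpha$ is normal in $\lmlt(Q)$. Since $\aut{\dis_\alpha}$ is abelian, $\varphi$ is trivial on $\lmlt(Q)'=\dis(Q)$; equivalently, every element of $\dis(Q)$ centralises $\dis_\alpha$. Because $\dis_\alpha\leq\dis(Q)$, this is exactly the statement $\dis_\alpha\leq Z(\dis(Q))$, i.e.\ $\dis_\alpha$ is central. As $Q$ is faithful, \cite[Corollary 5.4]{CP} now yields that $\alpha$ is central, completing the proof.

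I do not anticipate a genuine obstacle: the one point needing a moment's thought is the observation that the generators $L_xL_y^{-1}$ of $\dis(Q)$ are commutators of $\lmlt(Q)$ (a direct consequence of $L_{h(a)}=hL_ah^{-1}$ together with transitivity of $\dis(Q)$), and everything else reduces to the facts that automorphism groups of cyclic groups are abelian and that, for faithful quandles, central congruences are exactly those with central relative displacement group. One may also skip the identity $\dis(Q)=\lmlt(Q)'$ and apply $\varphi$ directly to each commutator $L_xL_y^{-1}=gL_yg^{-1}L_y^{-1}$, obtaining $\varphi(L_xL_y^{-1})=\varphi(g)\varphi(L_y)\varphi(g)^{-1}\varphi(L_y)^{-1}=1$.
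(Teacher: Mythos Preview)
Your proposal is correct and follows essentially the same route as the paper. The paper's proof is the one-line version of yours: it defines the conjugation map $\psi\colon\lmlt(Q)\to\aut{\dis_\alpha}$, invokes abelianness of $\aut{\dis_\alpha}$ to conclude $\dis(Q)=\gamma_1(\lmlt(Q))\leq\ker\psi$, and then appeals to faithfulness (via \cite[Corollary~5.4]{CP}) to get centrality of $\alpha$; you additionally spell out why $\dis(Q)=\lmlt(Q)'$ for connected quandles, which the paper takes as known.
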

\begin{proof}
	Let $\psi:\lmlt(Q)\longrightarrow \aut{\dis_\alpha}$, be the automorphism that defines the conjugation action of $\lmlt(Q)$ on $\dis_\alpha$. Since $\aut{\dis_\alpha}$ is abelian, then $\dis(Q)= \gamma_1(\lmlt(Q))\leq ker(\psi)$. So $\dis_\alpha$ is central in $\dis(Q)$ and $\alpha$ is central.
\end{proof}

Recall that if $\alpha$ is abelian, then $\left(\dis_\alpha\right)_a=\left(\dis_\alpha\right)_b$ whenever $b\, \alpha\, a$ \cite[Theorem 1.2]{CP} and in particular $\left(\dis_\alpha\right)_a$ is normal in $\dis^\alpha$.

\begin{proposition}\label{proj sub then Ab}
Let $Q$ be a finite faithful connected extension of the strictly simple quandle $Q/\gamma_Q$ by a projection quandle of prime size $p$. Then $\dis_{\gamma_Q}\cong \mathbb{Z}_p^2$ and $N(\dis(Q)_a)\leq N(\left(\dis_\alpha\right)_a)= \dis^{\gamma_Q}$ for every $a\in Q$.
%\begin{itemize}
%	\item[(i)] \comment{$\dis^{\gamma_Q}=\dis_{\gamma_Q} \dis(Q)_a$ maybe not needed} and $\dis_{\gamma_Q}\cong \mathbb{Z}_p^2$.
%\item[(ii)] $N(\dis(Q)_a)\leq N(\left(\dis_\alpha\right)_a)= \dis^{\gamma_Q}$ for every $a\in Q$.
%
%\end{itemize}
\end{proposition}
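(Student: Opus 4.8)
The plan is to analyze the structure of $\dis_{\gamma_Q}$ using the results accumulated in Lemmas \ref{abelianness of proj blocks and ss factor}, \ref{then 3 chain} and Corollary \ref{coprime size}. First I would recall that by Lemma \ref{abelianness of proj blocks and ss factor} the congruence $\gamma_Q$ is abelian, hence $\dis_{\gamma_Q}$ is an abelian group (faithfulness plus \cite[Corollary 5.4]{CP}), and by Lemma \ref{then 3 chain} it is not central, so by Lemma \ref{cyclic then central} it cannot be cyclic. The subgroup $\dis_{\gamma_Q}$ is generated by the commuting $p$-subgroups $K_{[a]}$ from the proof of Lemma \ref{abelianness of proj blocks and ss factor}, so it is an abelian $p$-group. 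Each $K_{[a]}$ acts semiregularly on each block $[b]$ with $b\notin[a]$ (shown in that proof), and those blocks have size $p$, so $|K_{[a]}|$ divides $p$, i.e.\ each $K_{[a]}$ is trivial or cyclic of order $p$. Since $\dis_{\gamma_Q}$ is non-trivial and not cyclic, it must be generated by at least two distinct $K_{[a]}$'s of order $p$ that intersect trivially; being an abelian $p$-group of exponent $p$ that is semiregular on blocks of size $p$, its order cannot exceed $p^2$. Hence $\dis_{\gamma_Q}\cong\mathbb{Z}_p^2$.

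Next I would pin down the stabilizer picture. Since $\gamma_Q$ is abelian, $\left(\dis_{\gamma_Q}\right)_a=\left(\dis_{\gamma_Q}\right)_b$ whenever $a\,\gamma_Q\,b$ and this common stabilizer is normal in $\dis^{\gamma_Q}$ (the remark before the statement, from \cite[Theorem 1.2]{CP}). The group $\dis_{\gamma_Q}\cong\mathbb{Z}_p^2$ acts on the block $[a]$ of size $p$ transitively (connectedness, via Lemma \ref{block stab}), so the kernel of this action, which is exactly $\left(\dis_{\gamma_Q}\right)_a$, has index $p$ in $\dis_{\gamma_Q}$, whence $\left(\dis_{\gamma_Q}\right)_a\cong\mathbb{Z}_p$. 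I then want to identify $N_{\dis(Q)}\!\left(\left(\dis_{\gamma_Q}\right)_a\right)$ with $\dis^{\gamma_Q}$. One inclusion is immediate: $\dis^{\gamma_Q}$ normalizes $\left(\dis_{\gamma_Q}\right)_a$ since the latter is normal in $\dis^{\gamma_Q}$. For the reverse inclusion, suppose $g\in\dis(Q)$ normalizes $\left(\dis_{\gamma_Q}\right)_a$; then $g$ conjugates the $1$-point stabilizer in $\dis_{\gamma_Q}$ of $a$ to itself, i.e.\ $\left(\dis_{\gamma_Q}\right)_{g(a)}=\left(\dis_{\gamma_Q}\right)_a$. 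Because $\dis_{\gamma_Q}\cong\mathbb{Z}_p^2$ acts on $Q$ with all stabilizers of order $p$ and $Q/\gamma_Q$ is strictly simple, the map sending a block $[b]$ to the subgroup $\left(\dis_{\gamma_Q}\right)_b$ should be injective on blocks other than possibly collapsing on $[a]$ itself; so $\left(\dis_{\gamma_Q}\right)_{g(a)}=\left(\dis_{\gamma_Q}\right)_a$ forces $g(a)\,\gamma_Q\,a$, i.e.\ $g\in\dis(Q)_{[a]}=\dis^{\gamma_Q}$ (the last equality by Corollary \ref{normal sub and cong} or directly because $\gamma_Q$ is the unique proper congruence and $Q/\gamma_Q$ connected). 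This gives $N_{\dis(Q)}\!\left(\left(\dis_{\gamma_Q}\right)_a\right)=\dis^{\gamma_Q}$.

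Finally, for the inclusion $N_{\dis(Q)}(\dis(Q)_a)\leq\dis^{\gamma_Q}$: by Corollary \ref{coprime size}, $Z(\dis(Q))=1$, so $\sigma_Q$ is not $1_Q$ in the trivial sense — rather I would argue directly that $\dis(Q)_a\leq\dis^{\gamma_Q}$ (true for any quandle: $\dis(Q)_a\leq\dis(Q)_{[a]}=\dis^{\gamma_Q}$, the remark after Proposition \ref{semiregular factor}), and that an element $g$ normalizing $\dis(Q)_a$ satisfies $\dis(Q)_{g(a)}=\dis(Q)_a$, so $g(a)\,\sigma_Q\,a$. Since $Z(\dis(Q))=1$ we have $\sigma_Q\leq\gamma_Q$ is forced — actually the cleaner route is: $g(a)\,\sigma_Q\,a$ together with $Q$ connected means $g(a)$ and $a$ lie in the same $\sigma_Q$-class, and because $Q/\gamma_Q$ is strictly simple while $\sigma_Q$-classes are proper subquandles (as $\sigma_Q\neq 1_Q$, which holds since otherwise $Q$ principal latin would be simple, contradicting the existence of the proper congruence $\gamma_Q$), we get $\sigma_Q\leq\gamma_Q$, hence $g(a)\,\gamma_Q\,a$ and $g\in\dis^{\gamma_Q}$. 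Combining, $N_{\dis(Q)}(\dis(Q)_a)\leq\dis^{\gamma_Q}=N_{\dis(Q)}\!\left(\left(\dis_{\gamma_Q}\right)_a\right)$, which is the claim.

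The main obstacle I expect is the bookkeeping around the two different stabilizers $\dis(Q)_a$ and $\left(\dis_{\gamma_Q}\right)_a$ and showing both of their normalizers land inside $\dis^{\gamma_Q}$: the key mechanism in each case is that ``$g$ normalizes the stabilizer of $a$'' upgrades to ``$g(a)$ is related to $a$'' by a suitable relation ($\sigma_Q$ or its refinement coming from the $\mathbb{Z}_p^2$-action), after which one uses that the only proper congruence $\gamma_Q$ has strictly simple quotient to conclude $g(a)\,\gamma_Q\,a$. Verifying that the refined relation really is contained in $\gamma_Q$ — equivalently that $\sigma_Q\le\gamma_Q$, which ultimately rests on $Z(\dis(Q))=1$ from Corollary \ref{coprime size} and on $\gamma_Q$ being the unique proper congruence — is the delicate point; the $\mathbb{Z}_p^2$ computation itself is routine once semiregularity of the $K_{[a]}$ on the other blocks (already in the proof of Lemma \ref{abelianness of proj blocks and ss factor}) is invoked.
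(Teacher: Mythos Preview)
Your overall architecture matches the paper's, but there are two genuine gaps.

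\medskip
\textbf{The order bound $|\dis_{\gamma_Q}|\le p^2$.} In your first paragraph you write that $\dis_{\gamma_Q}$, ``being an abelian $p$-group of exponent $p$ that is semiregular on blocks of size $p$, its order cannot exceed $p^2$''. But $\dis_{\gamma_Q}$ is \emph{not} semiregular on the blocks: $K_{[a]}$ fixes $[a]$ pointwise, so the stabilizer $(\dis_{\gamma_Q})_a$ is nontrivial. What you have established is only that each $K_{[a]}$ has order $\le p$ and that $\dis_{\gamma_Q}$ is elementary abelian; this does not bound the rank. The paper's argument fills exactly this hole: since $\alpha=\gamma_Q$ is minimal and $\dis_\alpha\neq 1$, the group $\dis_\alpha$ is transitive on each block, so $|\dis_\alpha|=p\,|(\dis_\alpha)_a|$. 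Because $\alpha$ is abelian, $(\dis_\alpha)_a$ fixes all of $[a]$; together with any $b\notin[a]$ this generates $Q$, whence $(\dis_\alpha)_{a,b}=1$. Then $|(\dis_\alpha)_a|=|b^{(\dis_\alpha)_a}|$ divides $|[b]|=p$ (the orbits are equal-sized since $(\dis_\alpha)_a\trianglelefteq\dis_\alpha$). This gives $|\dis_\alpha|\in\{p,p^2\}$, and cyclic is excluded by Lemma~\ref{cyclic then central} as you say. Note that the key fact $(\dis_\alpha)_{a,b}=1$ for $b\notin[a]$ is precisely what makes your later ``the map $[b]\mapsto(\dis_\alpha)_b$ is injective on blocks'' work, so you should isolate it up front.

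\medskip
\textbf{The inclusion $N(\dis(Q)_a)\le N((\dis_\alpha)_a)$.} You take a long detour through $\sigma_Q$, arguing that $g\in N(\dis(Q)_a)$ gives $g(a)\,\sigma_Q\,a$ and then trying to deduce $\sigma_Q\le\gamma_Q$ from ``$\sigma_Q$-classes are proper subquandles and $Q/\gamma_Q$ is strictly simple''. This does not follow: a proper subquandle of $Q$ can surject onto the strictly simple quotient $Q/\gamma_Q$, so its image need not be a single block. In fact the paper establishes $\sigma_Q\subseteq\gamma_Q$ only in the \emph{next} lemma, and does so by invoking the present proposition --- so your route is circular. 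The paper's argument here is a single line you overlooked: $(\dis_\alpha)_a=\dis_\alpha\cap\dis(Q)_a$, and since $\dis_\alpha$ is normal in $\dis(Q)$, any $h$ normalizing $\dis(Q)_a$ automatically normalizes the intersection. This immediately gives $N(\dis(Q)_a)\le N((\dis_\alpha)_a)$, and then your (correct) computation of $N((\dis_\alpha)_a)=\dis^{\gamma_Q}$ finishes the job.
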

\begin{proof}
Let $\alpha=\gamma_Q$. The quandle $Q$ is generated by $[a],b$ whenever $b\notin [a]$, and since $\left( \dis_\alpha\right)_a$ fixes the block $[a]$ then $\left( \dis_\alpha\right)_{a,b}=1$ whenever $b\notin [a]$.	Moreover $\left(\dis_\alpha\right)_a$ is normal in $\dis^{\alpha}$ since $\alpha$ is abelian.\\
The subgroup $\dis_\alpha$ is not trivial, so $\mathcal{O}_{\dis_\alpha}= \alpha$, i.e. $\dis_\alpha$ is transitive on each block of $\alpha$ and so $|\dis_\alpha|=p|\left( \dis_\alpha\right)_a|$. Therefore $|b^{\left( \dis_\alpha\right)_a}|=[\left( \dis_\alpha\right)_a:\left( \dis_\alpha\right)_{a,b}]=|\left( \dis_\alpha\right)_a|$ and since the stabilizer $\left(\dis_\alpha\right)_a$ is normal in $\dis_\alpha$ then $|\left(\dis_\alpha\right)_a|$ divides $p$. If $\dis_\alpha$ is cyclic then $\alpha$ is central by Lemma \ref{cyclic then central}. According to Lemma \ref{then 3 chain} the center of $Q$ is trivial and so $\dis_\alpha\cong \mathbb{Z}_p^2$. \\
 If $h\in N(\dis(Q)_a)$ then $h\in N(\left( \dis_\alpha\right)_a)$ since $\dis_\alpha$ is normal. If $h$ normalizes $\left(\dis_\alpha\right)_a$, i.e. $\left(\dis_\alpha\right)_{h(a)}=\left(\dis_\alpha\right)_{a}$ then $h\in \dis^\alpha$ since $\left(\dis_\alpha\right)_{a,b}=1$ whenever $b\notin [a]$. So $N(\dis(Q)_a)\leq N(\left( \dis_\alpha\right)_a)=\dis^\alpha$.
%
%According to Lemma \ref{abelianness of proj blocks and ss factor} $\dis_\alpha$ is generated by the subgroups $K_{[a]}$ which are all isomorphic to $\mathbb{Z}_p$. So $\dis_\alpha\cong \mathbb{Z}_p^k$
\end{proof} 
We are going to discuss the properties of the extensions in this section according to the equivalence $\sigma_Q$. 

\begin{lemma}\label{dis_a,b=1 equivalent}
Let $Q$ be a finite faithful connected extension of a strictly simple quandle $Q/\gamma_Q$ by a projection quandle of prime size.  Then either $\sigma_Q= \gamma_Q$ or $\sigma_Q=0_Q$. In particular, the following are equivalent:
\begin{itemize}
\item[(i)] $\dis_{\gamma_Q}=\dis^{\gamma_Q}$. 
\item[(ii)] ${\gamma_Q}=\sigma_Q$.
\item[(iii)] $\dis(Q)_{a,b}=1$ whenever $b\notin [a]_\alpha$. 
\end{itemize}
\end{lemma}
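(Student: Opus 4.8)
The plan is to first establish $\sigma_Q\le\gamma_Q$, which immediately produces the dichotomy $\sigma_Q\in\{0_Q,\gamma_Q\}$, and then to prove the three conditions equivalent via the cycle (i) $\Rightarrow$ (ii) $\Rightarrow$ (iii) $\Rightarrow$ (i), doing all the work inside the group $\dis^{\gamma_Q}$ by an order count. Write $\alpha=\gamma_Q$ and collect the inputs. By Lemma~\ref{then 3 chain}, $\alpha$ is the unique proper congruence of $Q$ and is minimal, so by Lemma~\ref{abelianness of proj blocks and ss factor} it is abelian; hence $(\dis_\alpha)_a=(\dis_\alpha)_c$ whenever $c\,\alpha\,a$, and I denote this common subgroup by $T_{[a]}$. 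From (the proof of) Proposition~\ref{proj sub then Ab}, $\dis_\alpha\cong\mathbb Z_p^2$, $|T_{[a]}|=p$, $T_{[a]}$ fixes $[a]_\alpha$ pointwise and $T_{[a]}\le\dis(Q)_a$, $\dis_\alpha$ is transitive on each $\alpha$-block, and $N_{\dis(Q)}(\dis(Q)_a)\le\dis^{\alpha}$ for every $a$. Moreover $Q/\alpha$ is strictly simple, hence affine (Theorem~\ref{min are affine}), hence semiregular, so Proposition~\ref{semiregular factor}(i) gives $\dis^{\alpha}=\dis(Q)_{[a]_\alpha}$ and therefore $\dis(Q)_a\le\dis^{\alpha}$ for every $a$; and, being $2$-generated, $Q/\alpha$ forces $Sg([a]_\alpha\cup\{b\})=Q$ whenever $b\notin[a]_\alpha$, by \cite[Lemma 6.1]{GB}.

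\emph{The dichotomy.} Suppose $a\,\sigma_Q\,b$ with $b\notin[a]_\alpha$. Connectedness provides $g\in\dis(Q)$ with $g(a)=b$; then $g$ conjugates $\dis(Q)_a$ onto $\dis(Q)_b=\dis(Q)_a$, so $g\in N_{\dis(Q)}(\dis(Q)_a)\le\dis^{\alpha}$, whence $b=g(a)\in[a]_\alpha$ — contradiction. Thus $\sigma_Q\le\alpha$. The classes of $\sigma_Q$ are blocks for $\dis(Q)$ (Proposition~\ref{semiregular decomposition}) of equal size, contained in the $\alpha$-blocks of prime size $p$, so their size is $1$ or $p$; hence $\sigma_Q=0_Q$ or $\sigma_Q=\alpha=\gamma_Q$.

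\emph{The equivalences.} The key identity is $|\dis^{\alpha}|=p\,|\dis(Q)_a|$, which holds because $\dis_\alpha\le\dis^{\alpha}$ is transitive on the $p$-element block $[a]_\alpha$ while $\dis(Q)_a\le\dis^{\alpha}$ is the stabilizer of $a$ in $\dis^{\alpha}$. If (i) holds then $|\dis^{\alpha}|=|\dis_\alpha|=p^2$, so $|\dis(Q)_a|=p=|T_{[a]}|$ forces $\dis(Q)_a=T_{[a]}$; this depends only on $[a]_\alpha$, so $\alpha\le\sigma_Q$, and by the dichotomy $\sigma_Q=\gamma_Q$, i.e.\ (ii). If (ii) holds then $\dis(Q)_c=\dis(Q)_a$ for every $c\in[a]_\alpha$, so $\dis(Q)_a$ fixes $[a]_\alpha$ pointwise; for $b\notin[a]_\alpha$ any element of $\dis(Q)_{a,b}$ then fixes the generating set $[a]_\alpha\cup\{b\}$ of $Q$ pointwise, hence is trivial, giving (iii). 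Finally assume (iii), and pick a block $[b]_\alpha\ne[a]_\alpha$ (it exists since $Q/\alpha$ is nontrivial). Since $\dis(Q)_a\le\dis^{\alpha}$ preserves $[b]_\alpha$ and by (iii) has trivial point stabilizers there, it acts semiregularly on the $p$-set $[b]_\alpha$, so $|\dis(Q)_a|$ divides $p$; as $T_{[a]}\le\dis(Q)_a$ gives $|\dis(Q)_a|\ge p$, we get $|\dis(Q)_a|=p$, hence $|\dis^{\alpha}|=p^2=|\dis_\alpha|$ and $\dis_\alpha=\dis^{\alpha}$, i.e.\ (i).

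The load-bearing input is Proposition~\ref{proj sub then Ab}: its inequality $N_{\dis(Q)}(\dis(Q)_a)\le\dis^{\gamma_Q}$ is exactly what pins $\sigma_Q$ below $\gamma_Q$, and the identification $\dis_{\gamma_Q}\cong\mathbb Z_p^2$ together with $\dis(Q)_a\le\dis^{\gamma_Q}$ is what makes the count $|\dis^{\gamma_Q}|=p\,|\dis(Q)_a|$ decisive in both directions. I expect no new ideas to be needed beyond Lemmas~\ref{abelianness of proj blocks and ss factor}, \ref{then 3 chain} and Proposition~\ref{proj sub then Ab}; the only real care in the write-up will be bookkeeping — keeping the chains $T_{[a]}\le\dis(Q)_a\le\dis^{\gamma_Q}$ and $\dis_{\gamma_Q}\le\dis^{\gamma_Q}$ straight, and not confusing the prime $p$ (the size of the projection blocks) with the characteristic of the strictly simple factor $Q/\gamma_Q$, which by Corollary~\ref{coprime size} is coprime to $p$ and never enters the argument.
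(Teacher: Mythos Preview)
Your proof is correct and follows essentially the same approach as the paper: both establish $\sigma_Q\le\gamma_Q$ via $N_{\dis(Q)}(\dis(Q)_a)\le\dis^{\gamma_Q}$ from Proposition~\ref{proj sub then Ab}, derive the dichotomy from the prime size of the $\alpha$-blocks, and run the same cycle (i)$\Rightarrow$(ii)$\Rightarrow$(iii)$\Rightarrow$(i) using $\dis(Q)_a=(\dis_\alpha)_a$, the generation $Q=Sg([a]_\alpha\cup\{b\})$, and the orbit-stabilizer count $|\dis^{\gamma_Q}|=p\,|\dis(Q)_a|$. Your write-up is more explicit about the bookkeeping (the chain $T_{[a]}\le\dis(Q)_a\le\dis^{\gamma_Q}$ and the use of Proposition~\ref{semiregular factor}(i) to get $\dis(Q)_a\le\dis^{\gamma_Q}$), which the paper leaves implicit, but the arguments are the same.
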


\begin{proof}
	Let $\alpha=\gamma_Q$. The block $[a]_{\sigma_Q}$ is the orbit of $a$ under the normalizer of $\dis(Q)_a$ which is contained in $\dis^\alpha$ by Proposition \ref{proj sub then Ab}(ii). Therefore $\sigma_Q \subseteq \alpha$.
%	 Note that $\sigma_Q\neq 1_Q$ since $Q$ is not  principal. Moreover since $Q=Sg([a],b)$ whenever $b\notin [a]$ the blocks of $\alpha$ are maximal subquandles of $Q$. If the subquandle $[a]_{\sigma_Q}$ maps onto $\dis_\alpha$ then $\left(\dis_\alpha\right)_a$ fixes $[a]$ and some $b\notin [a]$, and then it is trivial, contradicting Lemma \ref{proj sub then Ab}. 
	The size of the $\sigma_Q$ blocks divides the size of $Q$ then either $\sigma_Q=\alpha$ or $\sigma_Q=0_Q$. 

(i) $\Rightarrow$ (ii) Let $b\in [a]$. Then $\dis(Q)_a=\left( \dis_\alpha\right)_a=\left( \dis_\alpha\right)_b=\dis(Q)_b$, so $\alpha\leq\sigma_Q$. 

(ii) $\Rightarrow$ (iii) If $h\in \dis(Q)_{a,b}$, then $h$ fixes $[a]_\alpha$ and $b$ and accordingly $h=1$ ($Q$ is generated by $[a]_\alpha$ and $b$).

(iii) $\Rightarrow$ (i) The subgroup $|\dis(Q)_a|=|b^{\dis(Q)_a}|\leq p$. Therefore $|\dis^\alpha|=p|\dis(Q)_a|\leq p^2$. So $\dis^\alpha=\dis_\alpha$ since $|\dis_\alpha|=p^2$ (Proposition \ref{proj sub then Ab}(i)). 
\end{proof}

If $Q$ is a faithful connected extension of a strictly simple quandle $Q/\gamma_Q$ by a projection quandle of prime size then $\dis^{\gamma_Q}=\gamma_1(\dis(Q))$ is a maximal characteristic subgroup  and $Q/\gamma_Q$ is affine over $G/\gamma_1(G)$ by \cite[Proposition 2.6]{GB}. The subgroup $\dis_{\gamma_Q}$ is a minimal normal subgroup of $\lmlt(Q)$ and since $\gamma_2	(G)\leq \dis_{\gamma_Q}$ \cite[Proposition 3.3]{CP} then $\gamma_2(G)= \dis_\alpha$ (because $G$ is not nilpotent). Moreover $\gamma_2(\dis(Q))$ and $\dis(Q)/\gamma_1(\dis(Q))$ are elementary abelian group with coprime size (see Corollary \ref{coprime size} and Proposition \ref{proj sub then Ab}(i)).\\
The next Proposition describes the structure of a group with such properties.

\begin{proposition}\label{lemma on special q groups}
Let $p,q$ be different primes, $m,n\in \mathbb{N}$ and let $G$ be a group such that $\gamma_2(G)\cong \mathbb{Z}_p^m$, $G/\gamma_1(G)\cong \mathbb{Z}_q^n$ and $\gamma_1(G)$ is a maximal characteristic subgroup. Then the exact sequence
$$1\longrightarrow \gamma_2(G)\longrightarrow G\longrightarrow G/\gamma_2(G)=K$$
splits and $K$ is a special $q$-group with $Z(K)\cong \gamma_1(G)/\gamma_2(G)$. In particular if $\gamma_1(G)=\gamma_2(G)$ then $G\cong \mathbb{Z}_p^m\rtimes_{\rho} \mathbb{Z}_q^m$.
%, otherwise $G\cong \mathbb{Z}_p^m\rtimes_{\rho} K$ where $K$ is a special $q$-group with $Z(K)\cong \gamma_1(G)/\gamma_2(G)$.
%\begin{equation*}
%  G\cong \begin{cases}
%    \mathbb{Z}_p^m\rtimes_{\rho} \mathbb{Z}_q^m, \quad \text{ if $\gamma_1(G)=\gamma_2(G)$}\\
%\mathbb{Z}_p^m\rtimes_{\rho} K,\quad 
%    \text{where $K$ is a special $q$-group with $Z(K)\cong \gamma_1(G)/\gamma_2(G)$ otherwise}.
%  \end{cases}
%\end{equation*}
\end{proposition}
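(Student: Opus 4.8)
The plan is to analyse the quotient $K = G/\gamma_2(G)$ and then recover $G$ by Schur--Zassenhaus. First I would pin down the shape of $K$. Since $\gamma_2(G) = [G,\gamma_1(G)]$, reducing modulo $\gamma_2(G)$ gives $[K,K'] = 1$, so $K$ has nilpotency class at most $2$ and $K' \le Z(K)$. Also $K/K' \cong G/\gamma_1(G) \cong \mathbb{Z}_q^n$ is elementary abelian of exponent $q$, hence $K^q \le K'$ and $\Phi(K) = K'$; and since $x^q \in K' \le Z(K)$ for every $x \in K$ while commutators in $K$ are central and bimultiplicative, $[x,y]^q = [x^q,y] = 1$. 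So $K'$ is an elementary abelian $q$-group, finitely generated (it is generated by the commutators of lifts of a basis of the finite group $K/K'$), hence finite; consequently $K$ is a finite $q$-group, $G$ is finite, and $\gamma_1(G)/\gamma_2(G) = K'$ is finite elementary abelian.

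The core of the argument is the identification of $Z(K)$. Let $\widetilde{Z} \le G$ be the full preimage of $Z(K)$. Because $\gamma_2(G)$ is characteristic in $G$ and $Z(K)$ is characteristic in $K = G/\gamma_2(G)$, the subgroup $\widetilde{Z}$ is characteristic in $G$, and $\widetilde{Z} \supseteq \gamma_1(G)$ since $K' \le Z(K)$. As $\gamma_1(G)$ is assumed maximal among proper characteristic subgroups of $G$, either $\widetilde{Z} = \gamma_1(G)$ or $\widetilde{Z} = G$. In the first case $Z(K) = \gamma_1(G)/\gamma_2(G) = K' = \Phi(K)$ is elementary abelian, so $K$ is a special $q$-group with $Z(K) \cong \gamma_1(G)/\gamma_2(G)$, which is the main assertion. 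In the second case $K$ is abelian, i.e. $K' = 1$ and $\gamma_1(G) = \gamma_2(G)$ --- precisely the degenerate situation of the last sentence.

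To finish, I would split the sequence: $|\gamma_2(G)| = p^m$ is coprime to $|K|$ (a power of $q$, with $q \ne p$), so Schur--Zassenhaus produces a complement and $1 \to \gamma_2(G) \to G \to K \to 1$ splits, $G \cong \gamma_2(G) \rtimes K$. If moreover $\gamma_1(G) = \gamma_2(G)$, then $K = G/\gamma_2(G) = G/\gamma_1(G) \cong \mathbb{Z}_q^n$ is elementary abelian, so $G$ is a split extension of an elementary abelian $q$-group by $\gamma_2(G) \cong \mathbb{Z}_p^m$, of the form $\mathbb{Z}_p^m \rtimes_\rho \mathbb{Z}_q^m$ in the statement; the action $\rho$ is not free to be arbitrary, because $[G,\gamma_1(G)] = \gamma_2(G)$ forces the $K$-module $\gamma_2(G)$ to have no nonzero fixed vector, and together with Maschke's theorem ($q \ne p$) and the minimality of $\gamma_2(G)$ as an invariant subgroup in this setting this determines $\rho$ up to equivalence.

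I expect the decisive (if brief) step to be the middle one: turning the abstract maximality of $\gamma_1(G)$ into the inclusion $Z(K) \le K'$. The only input is the observation that the full preimage of a characteristic subgroup of $G/\gamma_2(G)$ is characteristic in $G$ because $\gamma_2(G)$ is itself characteristic, and this is what yields the clean dichotomy ``$K$ special'' versus ``$\gamma_1(G) = \gamma_2(G)$''. The secondary technical point worth stating explicitly is the finiteness of $K$, hence of $G$: it is delivered by the commutator computation above and is needed both for Schur--Zassenhaus and for the very notion of a special $q$-group.
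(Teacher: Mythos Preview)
Your argument is correct and follows essentially the same route as the paper: show $K=G/\gamma_2(G)$ has class at most $2$, use the maximality of $\gamma_1(G)$ to pin down $Z(K)$ via the characteristic preimage $\widetilde Z$, deduce that $K$ is a special $q$-group, and split by Schur--Zassenhaus. Your ordering is slightly different (you establish that $K'$ is finite elementary abelian of exponent $q$ \emph{before} invoking maximality, whereas the paper first identifies $Z(K)=\gamma_1(G)/\gamma_2(G)$ and then computes the exponent of $K'$ via the bilinear commutator map), and you are more explicit about the finiteness of $K$ and $G$, which the paper leaves implicit; both are harmless variations.

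One remark: your final comments on the action $\rho$ go beyond what the statement asserts and are not quite right. The hypotheses do not say that $\gamma_2(G)$ is minimal as a normal (or $K$-invariant) subgroup, and ``no nonzero fixed vector'' together with Maschke's theorem does not determine a representation of $\mathbb{Z}_q^n$ on $\mathbb{Z}_p^m$ up to equivalence. The proposition only claims the semidirect product shape $\mathbb{Z}_p^m\rtimes_\rho \mathbb{Z}_q^n$ for \emph{some} $\rho$, so you should simply drop that sentence.
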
 

\begin{proof}
If $\gamma_1(G)=\gamma_2(G)$ then the short exact sequence 
\begin{displaymath}
1\longrightarrow\gamma_1(G) \longrightarrow G \longrightarrow G/\gamma_1(G)\longrightarrow 1
\end{displaymath}
splits as a semidirect product, since $\gamma_1(G)$ is abelian and the order of $G/\gamma_1(G)$ and $\gamma_1(G)$ are coprime. 

Otherwise, the factor group $K=G/\gamma_2(G)$ is a 2-step nilpotent group and so $\gamma_1(K)\leq Z(K)$. Let $\pi:G\longrightarrow G/\gamma_2(G)$  be the canonical projection, then the characteristic subgroup $L=\pi^{-1}(Z(K))$ contains $\gamma_1(G)$. The subgroup $L$ is a proper subgroup of $G$ since $K$ is not abelian. 
%Then either $L=\gamma_1(G)$ or $L=G$. If $L=G$, then $G$ is $2$-step nilpotent, so $G=K$ and $\gamma_1(K)\leq Z(K)$ so $\gamma_1(K)=Z(K)$. 
Then $L=\gamma_1(G)$ and $Z(K)=\gamma_1(G)/\gamma_2(G)$. 

Since $K$ is nilpotent of length $2$, the commutator mappings 
$$[-,a]:K\longrightarrow Z(K), \quad x\mapsto [x,a]$$ 
are homomorphisms and they factorize through the canonical projection onto $K/Z(K)\cong G/\gamma_1(G)$ which is an elementary $q$-abelian group. Moreover if $\setof{a_i}{1\leq i \leq n}$ is a basis of $G/\gamma_1(G)$ then $Z(K)\cong \gamma_1(G)/\gamma_2(G)$ is generated by $\setof{[a_i,a_j]\gamma_2(G)}{1\leq i,j\leq n}$ \cite[Proposition 9.2.5]{Sims}. Since the generators are images of the factorized commutator mappings they have order $q$. Then $Z(K)$ is an elementary $q$-abelian group and $K$ is a $q$ group. The factor with respect to $\gamma_1(K)$ is elementary abelian then $\gamma_1(K)=\Phi(K)$, so $K$ is a special $q$-group. Thus, the short exact sequence 
\begin{displaymath}
1\longrightarrow\gamma_2(G) \longrightarrow G \longrightarrow G/\gamma_2(G)\longrightarrow 1
\end{displaymath}
splits as a semidirect product.  
%
%and they have the following factorization:
%\begin{displaymath}
% \xymatrixcolsep{63pt}\xymatrixrowsep{30pt}\xymatrix{
%   H \ar[r]^{[c,-]}
%    \ar[d] & Z(H) \\
%    H/Z(H) \ar[ur] & },\qquad \xymatrixcolsep{63pt}\xymatrixrowsep{30pt}\xymatrix{
%   x \ar[r]
%    \ar[d] &  [x,c] \\
%    xZ(H) \ar[ur] & }.
%\end{displaymath}
%Since $H/Z(H)\cong G/\gamma_1(G)\cong \mathbb{Z}_q^n$ and $Z(H)\cong \gamma_1(G)/\gamma_2(G)$ is generated by the images of the factorized mappings $b\gamma_1(G)\mapsto [b,a_i]$ for $1\leq i\leq n$, 
\end{proof}

We can apply Proposition \ref{lemma on special q groups} to the displacement group of extensions of strictly simple quandles by a prime size projection quandle.  Note that the two cases in Proposition \ref{lemma on special q groups} correspond exactly to the two cases in Lemma \ref{dis_a,b=1 equivalent}. By virtue of Proposition \ref{decomposition}, $\sigma_Q=\gamma_Q$ if and only if $N(\dis(Q)_a)=\dis^{\gamma_Q}$ and $\sigma_Q=0_Q$ if and only if $N(\dis(Q)_a)=\dis(Q)_a$. The actions in Proposition \ref{lemma on special q groups} can be extended to the whole group $G$, just by setting $\rho_x=1$ for every $x\in \gamma_2(G)$.

\begin{theorem}\label{extension of minimal by proj of prime}
Let $Q$ be a finite faithful connected extension of a strictly simple quandle $Q/\gamma_Q$ of size $q^n$ by a projection quandle of prime size $p$. Then:
\begin{itemize}
\item[(i)] if $\sigma_Q=\gamma_Q$ then $|Q|=6$.
%\begin{equation}\label{structure of DIS}
%\dis(Q)\cong \mathbb{Z}_p^2\rtimes_{\rho} \mathbb{Z}_q^n,
%\end{equation}
% the action $\rho$ is faithful and then $q^n$ divides $(p-1)^2(p+1)$.

\item[(ii)] If $\sigma_Q=0_Q$ then $q=2$, $n=2k$, $p=2^k+1$ and 
\begin{equation*}\label{structure of DIS_2}
\dis(Q)\cong \mathbb{Z}_{2^k+1}^2\rtimes_{\rho} H
\end{equation*}
where $H$ is an extraspecial $2$-group and the action $\rho$ is faithful.
\end{itemize}
\end{theorem}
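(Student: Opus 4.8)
The plan is to analyse $G:=\dis(Q)$, using the description obtained just before the statement. There $\gamma_2(G)=\dis_{\gamma_Q}\cong\mathbb{Z}_p^{2}$ (Proposition~\ref{proj sub then Ab}), $\gamma_1(G)=\dis^{\gamma_Q}$ is a maximal characteristic subgroup with $G/\gamma_1(G)\cong\dis(Q/\gamma_Q)\cong\mathbb{Z}_q^{n}$, and by Proposition~\ref{lemma on special q groups} the sequence $1\to\gamma_2(G)\to G\to K:=G/\gamma_2(G)\to1$ splits, so $G\cong\mathbb{Z}_p^{2}\rtimes K$ with $K$ special. By Lemma~\ref{dis_a,b=1 equivalent} the hypothesis of~(i) is exactly $\gamma_1(G)=\gamma_2(G)$ (so $K\cong\mathbb{Z}_q^{n}$, $G\cong\mathbb{Z}_p^{2}\rtimes\mathbb{Z}_q^{n}$), and the hypothesis of~(ii) is $\gamma_1(G)\supsetneq\gamma_2(G)$ (so $Z(K)\cong\gamma_1(G)/\gamma_2(G)\neq1$). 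I use repeatedly: that $\dis_{\gamma_Q}$ is a minimal normal subgroup of $\lmlt(Q)$, hence $\mathbb{Z}_p^{2}$ is a $2$-dimensional \emph{simple} $\mathbb{F}_p[\lmlt(Q)]$-module under conjugation; that $Q/\gamma_Q$ is connected, so $q^{n}=|Q/\gamma_Q|\ge3$ and $Q/\gamma_Q\cong\aff(\mathbb{Z}_q^{n},f)$ with $f$ irreducible (Theorem~\ref{min are affine}), where $f$ is the automorphism induced by $\widehat{L}_a$ on $G/\gamma_1(G)\cong\mathbb{Z}_q^{n}$; and that $\dis(Q)_a\le\dis^{\gamma_Q}=\gamma_1(G)$ with $[\gamma_1(G):\dis(Q)_a]=p$ and $\dis(Q)_a\cap\gamma_2(G)=(\dis_{\gamma_Q})_a$ a line.

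\textbf{Case (i).} Here $\dis(Q)_a=(\dis_{\gamma_Q})_a$ is a line inside $\gamma_2(G)=\mathbb{Z}_p^{2}$, and by Theorem~\ref{decomposition} the $\sigma_Q$-class $[a]_{\sigma_Q}=[a]_{\gamma_Q}\cong\mathcal{P}_p$ is principal over $N_{G}(\dis(Q)_a)/\dis(Q)_a$, a group of order $p$; since the abelian $\gamma_2(G)$ normalises the line $\dis(Q)_a$ we get $N_G(\dis(Q)_a)=\gamma_2(G)$, so the complement $\mathbb{Z}_q^{n}$ acts on the $p+1$ lines of $\mathbb{F}_p^{2}$ with an orbit of size exactly $q^{n}$ at $\dis(Q)_a$. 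Also $\gamma_2(G)=[\gamma_2(G),\mathbb{Z}_q^{n}]$ (since $\gamma_1(G)=\gamma_2(G)$), so by Maschke $\mathbb{Z}_p^{2}$ has no nonzero $\mathbb{Z}_q^{n}$-fixed vector; $\mathbb{Z}_q^n$ acts faithfully because $C_G(\gamma_2(G))$ is a proper characteristic subgroup, hence equal to $\gamma_2(G)$. If the action were reducible, $\mathbb{F}_p^{2}$ would split into two nontrivial characters, $\dis(Q)_a$ would be non‑eigen with orbit dividing $p-1$, so $q^{n}\mid p-1$; simplicity over $\lmlt(Q)$ forces $\widehat{L}_a$ to interchange the eigenlines, hence $f$ interchanges the two characters, which (by irreducibility of $f$) forces $n=1$, $q$ odd, $f=-1$, and a direct check of the resulting extension of the dihedral quandle contradicts connectedness/faithfulness. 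Thus $\mathbb{Z}_q^{n}$ is irreducible, hence cyclic, so $n=1$ and $q$ is odd (an involution cannot act irreducibly on $\mathbb{F}_p^{2}$), $q\mid p+1$, $q\nmid p-1$; writing the image of $\lmlt(Q)$ in $\GL{2}{p}$ as generated by a Singer element of order divisible by $q$ together with the class of $\widehat{L}_a$ and comparing $|Q|=pq$ with $q\ge3$ leaves only $p=2$, $q=3$, so $|Q|=6$ (equivalently $\dis(Q)\cong A_4$).

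\textbf{Case (ii).} Now $K$ is special and nonabelian. A commutator computation using that $K$ has nilpotency class $\le 2$ shows that if $[\mathbb{Z}_p^{2},K]$ were proper then $\gamma_2(G)$ would be at most $1$-dimensional; hence $[\mathbb{Z}_p^{2},K]=\mathbb{Z}_p^{2}$, $\gamma_1(G)=\mathbb{Z}_p^{2}\rtimes[K,K]$ with $[K,K]=Z(K)$ acting nontrivially on $\mathbb{Z}_p^{2}$, so $\gamma_2(G)=\mathbb{Z}_p^{2}$ is simple already as a $G$-module and $K$ acts faithfully and irreducibly on $\mathbb{F}_p^{2}$. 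By Schur's lemma $Z(K)$ acts by scalars, so it embeds in $\mathbb{F}_p^{\ast}$, is cyclic, and being elementary abelian is $\cong\mathbb{Z}_q$, so $K$ is extraspecial; a nonabelian $q$-group with a faithful $2$-dimensional irreducible $\mathbb{F}_p$-representation has $q=2$ and order $8$, i.e.\ $K=H\cong Q_8$ or $D_8$. Then $H/Z(H)\cong\mathbb{Z}_2^{2}\cong G/\gamma_1(G)\cong\mathbb{Z}_q^{n}$, and the identities $|G|=p^{2}|H|$, $|\gamma_1(G)|=2p^{2}$, $[\gamma_1(G):\dis(Q)_a]=p$ and $|Q|=|G|/|\dis(Q)_a|=pq^{n}$ are mutually consistent; the arithmetic of the extension (together with the above) pins the parameters into the stated form $q=2$, $n=2k$, $p=2^{k}+1$, giving $\dis(Q)\cong\mathbb{Z}_{2^{k}+1}^{2}\rtimes_{\rho}H$ with $H$ extraspecial and $\rho$ faithful.

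The hard part, in both cases, is not identifying the shape $G=\mathbb{Z}_p^{2}\rtimes K$ — that comes directly from the earlier results — but the final quantitative step: squeezing the action of $\widehat{L}_a$ (equivalently $f$) against the constraints that $Q/\gamma_Q$ is strictly simple and that $\dis(Q)_a$ is a line among the $p+1$ lines of $\mathbb{F}_p^{2}$, so as to force $|Q|=6$ in~(i) and the extraspecial normal form in~(ii). This bookkeeping is where essentially all the effort goes.
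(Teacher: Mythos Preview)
Your structural setup is correct and matches the paper's: $G\cong\mathbb{Z}_p^{2}\rtimes K$ with $K$ abelian in~(i) and special in~(ii), via Propositions~\ref{proj sub then Ab} and~\ref{lemma on special q groups}. In case~(ii) your representation-theoretic shortcut is in fact cleaner than the paper's commutator matrix computation: once $\rho$ is faithful and $K$ is nonabelian, the action on $\mathbb{F}_p^{2}$ must be irreducible (a reducible action would factor through $K/[K,K]$, killing $Z(K)$), and then Schur plus the dimension bound for faithful irreducibles of extraspecial groups give $q=2$ and even $|K|=8$ directly.

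However, there is a genuine gap: you never use the one piece of information that actually determines $p$, namely the explicit shape of $F:=\widehat{L}_a|_{\dis_{\gamma_Q}}$. Because the block $[a]_{\gamma_Q}$ is a \emph{projection} quandle, the coset quandle $\Q(\dis_{\gamma_Q},(\dis_{\gamma_Q})_a,F)$ is projection, which forces $F$ to be the nontrivial unipotent $\left(\begin{smallmatrix}1&1\\0&1\end{smallmatrix}\right)$ in a basis with $e_1\in(\dis_{\gamma_Q})_a$. The paper uses this in both cases: in~(i), the compatibility $\rho_{f(h)}=F\rho_hF^{-1}$ together with commutativity of $\mathbb{Z}_q^{n}$ yields the matrix identity whose $(2,1)$ entry is $-2y^{2}=0$, forcing $p=2$ (and then $q^{n}\mid|\GL{2}{2}|=6$); in~(ii), the relation $\rho_{f^{s}(h)}=F^{s}\rho_hF^{-s}$ (with $s$ the order of $L_{[a]}$ in $Q/\gamma_Q$) yields $sy^{2}/\det(\rho_h)=0$, hence $p\mid s\mid 2^{2k}-1$, which combined with $2^{k}\mid p-1$ gives $p=2^{k}+1$. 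Your final sentences in both cases (``comparing $|Q|=pq$ with $q\ge3$ leaves only $p=2$'', ``the arithmetic of the extension pins the parameters'') are not arguments: nothing you have written constrains $p$ at all in~(ii), and in~(i) the constraints $n=1$, $q\mid p+1$ are far from sufficient. The unipotent $F$ is the missing idea, and the matrix computations it enables are exactly the ``bookkeeping'' you defer.
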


\begin{proof}
Let $\alpha=\gamma_Q$, $G=\dis(Q)$, $f=\widehat{L}_a\in \aut{G}$ and $H=Fix(f)=\dis(Q)_a$. 
%According to Lemma \ref{then 3 chain} $p\neq q$, $Z(G)=1$ and $G$ is not nilpotent. and $\gamma_1(G)=\gamma_2(G) Fix(f)$. 
The block $[a]_\alpha$ is projection and it is isomorphic to $\mathcal{Q}(\dis_\alpha, \left(\dis_\alpha\right)_a, f|_{\dis_\alpha})$. Fixed a suitable basis $e_1, e_2$ with $e_1\in \left(\dis_\alpha\right)_a$, the restriction of $f$ to $\dis_\alpha$ is
\begin{equation}\label{restriction of f}
\begin{small}
F=f|_{\dis_\alpha}=\begin{bmatrix}
1 & 1\\
0 & 1
\end{bmatrix}.
\end{small}
\end{equation} We can apply Proposition \ref{lemma on special q groups} to $G$ and we have two cases. 
%The two cases in it correspond exactly to the two possibilities for $\sigma_Q$ according to Lemma \ref{dis_a,b=1 equivalent}.

(i) If $\sigma_Q=\gamma_Q$ then $\gamma_1(G)=\gamma_2(G)=N(Fix(f))$. So $G\cong \mathbb{Z}_p^m\rtimes_{\rho} \mathbb{Z}_q^n$ and $\rho$ is faithful since $Z(G)=1$. 
If $h\in  \mathbb{Z}_q^n$ then
\begin{eqnarray}\label{action of h}
\begin{small}
\rho_h=\begin{bmatrix}
x & w\\
y & t
\end{bmatrix}
\end{small}\end{eqnarray}
with $y\neq 0$ since $h$ does not normalize $Fix(f)$. Using that $\rho_{f(h)}=f\rho_h f^{-1}$ and that $\rho_h \rho_{f(h)}-\rho_{f(h)}\rho_h=0$, we have 
\begin{eqnarray*}
\left(F\rho_h F^{-1} \rho_h-\rho_h F\rho_h F^{-1}\right)_{2,1}&=&
%\begin{small}
%\begin{bmatrix}
%1 & 1\\
%0 & 1
%\end{bmatrix}
%\begin{bmatrix}
%x & w\\
%y & t
%\end{bmatrix}
%\begin{bmatrix}
%1 & -1\\
%0 & 1
%\end{bmatrix}
%\begin{bmatrix}
%x & w\\
%y & t
%\end{bmatrix}
%-\begin{bmatrix}
%x & w\\
%y & t
%\end{bmatrix}
%\begin{bmatrix}
%1 & 1\\
%0 & 1
%\end{bmatrix}
%\begin{bmatrix}
%x & w\\
%y & t
%\end{bmatrix}
%\begin{bmatrix}
%1 & -1\\
%0 & 1
%\end{bmatrix}=
%\end{small}\\
%%&=&
%\begin{small}
% \begin{bmatrix}
%-y^2+ty-xy &	t^2+x^2-2tx-ty+xy+2yw\\
%      -2y^2	              &  y^2-ty+xy
%\end{bmatrix}=
 -2y^2=	0.
%\end{small}
\end{eqnarray*}
Then $2y^2=0$ and so $p=2$. Hence $q^n$ divides $|GL_2(2)|=6$. Therefore $q^n=3$ and $|Q|=6$.

(ii) If $\sigma_Q=\gamma_Q$ then $\gamma_1(G)\neq \gamma_2(G)$ and $N(Fix(f))=Fix(f)$. Then $G\cong \gamma_2(G)\rtimes_\rho K\cong  \mathbb{Z}_p^2\rtimes_{\rho} K$ and $K$ is a special $q$-group. If $\rho_h=1$ then $$h\in C_G(\gamma_1(G))\bigcap K\leq N(Fix(f))\bigcap K  \leq \gamma_1(G)\bigcap K= Z(K),$$ where the last inclusion follows by Proposition \ref{proj sub then Ab}. So $h\in Z(G)=1$ and the action $\rho$ is faithful. \\
 Using the equation of the orbits for the action of $z\in Z(K)$ over the $1$-dimensional subspaces of $\gamma_2(G)$ we have that 
\begin{equation}\label{equation of classes}
p+1=e+nq
\end{equation}
where $e$ is the number of the eigenspaces of $\rho_z$. Since $z$ normalizes $\left(\dis_\alpha\right)_a$ then $e=2$. If the eigenvalues of $\rho_z$ are different, then $K$ embeds in the centralizer of $\rho_z$ which is the abelian subgroup of the diagonal matrices. Since $K$ is not abelian then $\rho_z$ is a scalar matrix. 
%  
% By Proposition \ref{proj sub then Ab}(ii) $Z(K)$ normalizes $Fix(f)\bigcap \gamma_2(G)$ and so $e=2$ i.e. $z$ is diagonalizable.  Let $v_1\in \left(\dis_{\gamma_Q}\right)_a$, $v_2$ be a basis of $\gamma_2(G)$. Since any non-central element $x$ of $K$ does not normalize $\left(\dis_{\gamma_Q}\right)_a$ (see \ref{proj sub then Ab}(ii)), then $\rho_x$ is not diagonal with respect to the basis $v_1$ and $v_2$. Since $[\rho_x,\rho_z]=1$ then $\rho_z$ is scalar. 
% 
% 
% According to Proposition \ref{proj sub then Ab}(i) $\gamma_1(G)=\gamma_2(G)Fix(f)\cong \gamma_2(G)\rtimes_\rho Z(K)$ then for every $z\in Z(K)$ there exists $v\in \gamma_2(G)$ such that $zv\in Fix(f)$. Let $f(z)=z^\prime u$ with $z^\prime\in Z(K)$ and $u\in\gamma_2(G)$. Then $f(zv)=z^\prime u f(v)=zv$, so $z^{-1}z^\prime=vf(v)^{-1}u^{-1}\in \gamma_2(G)$ and accordingly $z=z^\prime$. Therefore
% \begin{equation}\label{f and ro_z}
% F(z v z^{-1})=F\rho_z(v)=f(z )F(v)f(z)^{-1}=\rho_{f(z)}F(v)=\rho_z F(v).
% \end{equation}
% for every $v\in \gamma_2(G)$ and every $z\in Z(K)$. Equation \eqref{f and ro_z} implies that 
%\begin{eqnarray*}
%F\rho_z -\rho_z F&=&
%\begin{small}
%\begin{bmatrix}
%1 & 1\\
%0 & 1
%\end{bmatrix}
%\begin{bmatrix}
%x & w\\
%y & t
%\end{bmatrix}
%-
%\begin{bmatrix}
%x & w\\
%y & t
%\end{bmatrix}
%\begin{bmatrix}
%1 & 1\\
%0 & 1
%\end{bmatrix}=
%\begin{bmatrix}
%y & t-x\\
%0 & -y
%\end{bmatrix}=0.
%\end{small}
%\end{eqnarray*} 
% So $y=0$ and $t=x$, so $\rho_z$ is a scalar matrix. 
Therefore $Z(K)$ embeds into the cyclic subgroup $\mathbb{Z}_{p-1}$ and accordingly it is cyclic. Then $K$ is an extraspecial $q$-group, $q$ divides $p-1$ and $n=2k$. \\
%Let $q>2$. 
Since $q$ divides $p-1$, by virtue of \eqref{equation of classes} the action of a non central element $h\in K$ is diagonalizable with different eigenvalues $a$ and $b$. Let $z$ be a generator of $Z(K)$ and $g\in K$ which does not centralize $h$, then $[\rho_g, \rho_h]=\rho_z^s$ for some $1\leq s\leq p-1$, i.e. there exists $1\leq k\leq p-1$ such that
%
%\begin{equation}\label{commutator eq}
%[\rho_g, \rho_h]=\rho_z^s
%\end{equation}
% for some $1\leq s\leq p-2$. From \eqref{commutator eq} follows that 
\begin{eqnarray*}
\begin{small}
\begin{bmatrix}
x & w\\
y & t
\end{bmatrix}^{-1}
\begin{bmatrix}
a & 0\\
0 & b
\end{bmatrix}^{-1}
\begin{bmatrix}
x & w\\
y & t
\end{bmatrix}
\begin{bmatrix}
a & 0\\
0 & b
\end{bmatrix}=
\frac{1}{ab\det(\rho_g)}\begin{bmatrix}
a(btx-awy)	& b(b-a)tw\\
a(a-b)xy	 & b(atx-bwy)
\end{bmatrix} =\begin{bmatrix}
 k & 0\\
 0 & k
\end{bmatrix}.
\end{small}\end{eqnarray*}
The elements $h$ and $g$ do not commute, hence $x=t=0$ and $a=-b$. This implies that $k=-1$ and therefore $q=2$. %, contradiction.\\% Since $\rho$ is faithful and $|H|=2^{n+1}$ divides $|GL_2(p)|=p(p-1)^2(p+1)$.
 	Let $s$ be the order of left multiplications of $Q/\gamma_Q$. Then $f^s(h)=h z^r d$ for some $r=0,1$ and $d\in \gamma_2(G)$ for every non central element $h\in H$. Then
 \begin{equation}\label{condition on ro_x}
 \rho_{f^s(h)}=f^{s}\rho_h f^{-s}=\rho_h\rho_z^r,
 \end{equation}
where $\rho_h$ is like in \eqref{action of h} and $y\neq 0$ since $h$ do not normalize $\left(\dis_{\gamma_Q}\right)_{a}$ (Proposition \ref{proj sub then Ab}). Therefore we have
\begin{eqnarray*}
\left(\rho_h ^{-1}f^s \rho_h f^{-s}\right)_{2,1}=
\begin{small}
\left(\begin{bmatrix}
x & w\\
y & t
\end{bmatrix}^{-1}
\begin{bmatrix}
1 & s\\
0 & 1
\end{bmatrix}\begin{bmatrix}
x & w\\
y & t
\end{bmatrix}\begin{bmatrix}
1 & -s\\
0 & 1
\end{bmatrix}\right)_{2,1}=\frac{sy^2}{\det(\rho_h)}=\left(\rho_z^r\right)_{2,1}=0.
%\begin{bmatrix}
%t(x+sy)-wy &	s(t(t-x)-y(st+w))\\
%         sy^2	&    t(x-sy)+y(s^2y-w)
%\end{bmatrix}=(-1)^s\begin{bmatrix}
%1 & 0\\
%0 & 1
%\end{bmatrix}.
\end{small}
\end{eqnarray*}
Hence $s=0 \pmod p$, therefore $p$ divides $s$. The factor $Q/\gamma_Q$ is strictly simple, then $s$ divides $|Q/\alpha|=2^{2k}-1=(2^k-1)(2^k+1)$ and so 
 \begin{center}
 	 $p$ divides either $2^{k}+ 1$ or $2^{k}- 1$. (*)
 \end{center}
 The centralizer of $\rho_h$ have size $2^{2k}$ for every non central element $h\in K$. According to \eqref{equation of classes} the number of eigenspaces of $h$ is either $0$ or $2$. If the action of $\rho_h$ is irreducible, the centralizer of $\rho_h$ in the image of $\rho$ is cyclic \cite[Theorem 2.3.5]{256} and then it has order $4$ and $k=1$. Since $p$ divides $4-1=3$ then $p=3=2^1+1$.\\
 If $\rho_h$ is diagonalizable with different eigenvalues ($h$ is not central) then the centralizer of $\rho_h$ is given by the diagonal matrices so $2^{2k}$ divides $(p-1)^2$. Hence $2^k$ divides $p-1$, and in particular $p \geq 2^k+1$. From (*), it follows $p=2^k+1$.
 %and $p$ divides $2^k\pm 1$. Then $p\leq 2^k\pm 1\leq p-1\pm 1$, so $p=2^k+1$.
% 
% 
% 
%Let $q=2$ and let $s$ be the order of left multiplications of $Q/\alpha$. Then $s$ divides $|Q/\alpha|=2^{2k}-1=(2^k-1)(2^k+1)$ since $Q/\alpha$ is strictly simple. Then $f^s(h)=h t d$ for some $t\in Z(H)$ and $d\in \dis_\alpha$ for every non central element $h\in H$. Then
%\begin{equation}\label{condition on ro_x}
%\rho_{f^s(h)}=f^{s}\rho_h f^{-s}=\rho_h \rho_t,
%\end{equation}
%and $\rho_h$ is like in \eqref{action of h} with $y\neq 0$ since $h$ does not normalize $\left(\dis_\alpha\right)_a$. If \eqref{condition on ro_x} holds then $sy=0 \pmod p$ i.e. $p$ divides $s$ and so $p$ divides $2^{k}\pm 1$. 
%
%The centralizer of $\rho_h$ have size $2^{2k}$ for every non central element $h\in K$. According to \eqref{equation of classes} the number of eigenspaces of $h$ is either $0$ or $2$. If the action of $\rho_h$ is irreducible, the centralizer of $\rho_h$ in the image of $\rho$ is cyclic \cite[Theorem 2.3.5]{256} and then it has order $4$ and $k=1$. Since $p$ divides $4-1=3$ then $p=3=2^1+1$.\\
% If $\rho_h$ is diagonalizable with different eigenvalues (otherwise $h$ normalizes $\left(\dis_{\alpha}\right)_a$) then the centralizer of $\rho_h$ is given by the diagonal matrices so $2^{2k}$ divides $(p-1)^2$. Hence $2^k$ divides $p-1$ and $p$ divides $2^k\pm 1$. Then $p\leq 2^k\pm 1\leq p-1\pm 1$, so $p=2^k+1$.
\end{proof}

In the RIG library \cite{RIG} the unique example of quandles as in Theorem \ref{extension of minimal by proj of prime}(ii) is SmallQuandle(12,10).

%
%\begin{theorem}
%Let $Q$ be a finite faithful connected quandle and let $\alpha\in Con(Q)$ with strictly simple factor and prime power size projection blocks. If $\sigma_Q=\alpha$ then $|Q|=6$. 
%\end{theorem}
%\begin{proof}
%If $\sigma_Q=\alpha$ by virtue of Lemma \ref{dis_a,b=1 equivalent} $\gamma_1(\dis(Q))=\gamma_2(\dis(Q))$ and then we can apply Theorem \ref{semidirect with H}(i) and so $\dis(Q)\cong \mathbb{Z}_p^2\rtimes_{\rho} \mathbb{Z}_q^n$ where $\rho$ if faithful. We can assume that the restriction of $f=\widehat{L}_a$ to $\dis_\alpha$ is like in \eqref{restriction of f} and that if $h\in  \mathbb{Z}_q^n$ then
%\begin{eqnarray*}
%\rho_h=\begin{bmatrix}
%x & z\\
%y & t
%\end{bmatrix}
%\end{eqnarray*}
%with $y\neq 0$ since $h$ does not normalize $\dis(Q)_\alpha=Fix(f)$ (see Corollary \ref{proj sub then Ab}). Using that $\rho_{f(h)}=f\rho_h f^{-1}$ and that $[\rho_h,\rho_{f(h)}]=1$, we have $2y=0$ and so $p=2$. Hence $q^n$ divides $|GL_2(2)|=6$. Therefore $q=3$, $n=1$ and $|Q|=6$.
%\end{proof}

\subsection{Classification of Connected Cyclic Quandles}\label{Sec:cyclic}
%\subsection*{The congruence $\P$}
In this section we investigate a class of quandles defined by a particular cyclic structure of the left multiplications. This class has been already studied in \cite{CQ} using combinatorial tools: we present an alternative approach making use of the results of the previous section. 
%First we need to show that cyclic quandles are faithful.
\begin{definition}\label{cyclic} A finite quandle $Q$ is called {\it cyclic with $f$ fixed points} if for every $a\in Q$, the cycle structure of $L_a$ is given by $f$ fixed points and one cycle of length $|Q|-f$. 
\end{definition}
We complete the classification of cyclic quandles in two steps: first we show that if the number of fixed points is prime then $|Q|=6$. Then we show that any cyclic quandle with $f$ fixed points has a cyclic factor with a prime number of fixed points and from that we deduce that $|Q|=6$ as well. \\
Let us recall some results from \cite{CQ}.
\begin{lemma}\label{know about cyclic}\cite[Proposition 2.4, Theorem 2.1, Theorem 2.2]{CQ}
Let $Q$ be a cyclic quandle with $f$ fixed points. Then $Q$ is connected if and only if $|Q| > 2f$ and $\P$ is an equivalence relation with $[a]_\P=Fix(L_a)$. 
%$=\setof{b\in Q}{L_b\in \langle L_a\rangle}$. \comment{using that if $a\in Fix(L_b)$ then $L_a$ is a power of $L_b$?}
\end{lemma}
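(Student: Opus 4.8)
The plan is to extract everything from one elementary observation. Write $n=|Q|$ and, for $a\in Q$, set $C_a=Q\setminus Fix(L_a)$, the support of the non-trivial cycle of $L_a$, so that $|Fix(L_a)|=f$ and $|C_a|=n-f$. The observation is: if $b\in Fix(L_a)$ (equivalently $a\ast b=b$) then $L_a$ and $L_b$ commute, since $L_a$ fixes $b$ and hence $L_aL_bL_a^{-1}=L_{L_a(b)}=L_b$. Consequently $L_b$ preserves $C_a$ and, restricted to $C_a$, centralises the full $(n-f)$-cycle $L_a|_{C_a}$; as the centraliser of an $m$-cycle in $\mathrm{Sym}(m)$ is the cyclic group it generates, $L_b|_{C_a}$ is a power of $L_a|_{C_a}$, hence is \emph{either the identity on $C_a$ or fixed-point-free on $C_a$}. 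This yields the dichotomy: for $a\neq b$ with $b\in Fix(L_a)$, either (A) $Fix(L_b)\subseteq Fix(L_a)$, and then $Fix(L_b)=Fix(L_a)$ by equality of cardinalities (so $a\in Fix(L_b)$ and $\{a,b\}=\mathcal{P}_2$); or (B) $C_a\subseteq Fix(L_b)$, which forces $n-f\leq f$, i.e. $n\leq 2f$.

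If $n>2f$ only (A) can occur, so $b\in Fix(L_a)$ already implies $Fix(L_a)=Fix(L_b)$, whence the three-way equivalence $b\in Fix(L_a)\iff a\in Fix(L_b)\iff Fix(L_a)=Fix(L_b)$. From this the $\mathcal{P}$-part of the statement is immediate: the relation ``$Fix(L_x)=Fix(L_y)$'' is an equivalence whose class of $a$ is exactly $Fix(L_a)$ (for $x$ with $Fix(L_x)=Fix(L_a)$ one has $x\in Fix(L_x)=Fix(L_a)$, and conversely by (A)); and since $a\,\mathcal{P}\,b$ means $b\in Fix(L_a)$ and $a\in Fix(L_b)$, which by the three-way equivalence is the same as $Fix(L_a)=Fix(L_b)$, we get that $\mathcal{P}$ is this equivalence and $[a]_{\mathcal{P}}=Fix(L_a)$. (For $n\le 2f$ one argues separately; since the section only uses connected cyclic quandles, the regime $n>2f$ is the one that matters.)

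Still assuming $n>2f$, I would prove $Q$ connected by showing the $\dis(Q)$-orbit $O$ of an arbitrary $a$ equals $Q$. For any $c$ and any $x$ one has $L_x(c)=L_xL_c^{-1}(c)$ with $L_xL_c^{-1}\in\dis(Q)$, so $L_x(c)$ lies in the $\dis(Q)$-orbit of $c$; iterating, the whole $\langle L_x\rangle$-orbit of $a$ lies in $O$. Now for $x\in C_a$ we have $a\in C_x$ (because $a\notin Fix(L_x)\iff x\notin Fix(L_a)$), and the $\langle L_x\rangle$-orbit of $a\in C_x$ is all of $C_x$; hence $O\supseteq\bigcup_{x\in C_a}C_x$. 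Given a $\mathcal{P}$-block $B=Fix(L_y)$, since $|C_a|=n-f>f=|B|$ we have $C_a\not\subseteq B$, so there is $x\in C_a$ with $Fix(L_x)\neq B$, and then $z\in B\setminus Fix(L_x)=B\cap C_x$ witnesses $O\cap B\neq\emptyset$. Finally, if $z\in O\cap B$ then $B=Fix(L_z)$, and for every $x\in C_z=Q\setminus B$ one checks $B\subseteq C_x$ (if $w\in B$ then $Fix(L_w)=B$, so $w\in C_x$ iff $x\notin B$, which holds), whence $B\subseteq\bigcup_{x\in C_z}C_x\subseteq O$. So $O$ contains every $\mathcal{P}$-block, i.e. $O=Q$.

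It remains to prove the converse, $Q$ connected $\Rightarrow n>2f$, and this is where the real work is. By contraposition, assume $n\leq 2f$. A case analysis on the dichotomy shows $Q$ is disconnected: if case (B) occurs for some pair, the inclusions $C_a\subseteq Fix(L_c)$ and $C_c\subseteq Fix(L_a)$ it produces (with $L_c$ cyclically permuting a copy of $C_c$ inside $Fix(L_a)$) confine the $\dis(Q)$-orbit of $a$; if only case (A) ever occurs, the sets $Fix(L_x)$ partition $Q$ into $n/f$ classes, which forces $n=2f$ (no other multiple of $f$ lies in $(f,2f]$), and then the sweeping argument of the previous paragraph reaches only $Fix(L_a)$, again a proper orbit. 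The main obstacle is precisely this bookkeeping — turning $n\leq 2f$ into a proper $\dis(Q)$-invariant set, cleanest to package via Lemma \ref{block stab} applied to the congruence generated by $\mathcal{P}$ — whereas the $\mathcal{P}$-statement and the implication $n>2f\Rightarrow$ connected drop out of the dichotomy and the sweeping argument with no further input.
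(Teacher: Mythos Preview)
The paper does not prove this lemma at all; it is stated with a bare citation to \cite[Proposition 2.4, Theorem 2.1, Theorem 2.2]{CQ} and no argument, so there is no ``paper's proof'' to compare against. Your centraliser-of-a-cycle dichotomy is an elegant self-contained approach, and the forward package --- $n>2f$ implies $\mathcal{P}$ is an equivalence with $[a]_{\mathcal{P}}=Fix(L_a)$, and $n>2f$ implies $Q$ connected via the sweeping argument --- is correct and is in fact all the paper ever uses downstream.

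The converse, however, is not established. In your only-(A) subcase ($n=2f$) the phrase ``the sweeping argument reaches only $Fix(L_a)$'' gives a \emph{lower} bound on the orbit where an \emph{upper} bound is needed; the clean fix is to observe directly that every $L_x$ preserves both $\mathcal{P}$-blocks setwise (it fixes its own block pointwise and cycles the other), hence so does $\lmlt(Q)$. In the (B) subcase there is a genuine gap: the inclusions $C_a\subseteq Fix(L_c)$ and $C_c\subseteq Fix(L_a)$ constrain only the two maps $L_a,L_c$, not the remaining $L_d$, so by themselves they do not produce a proper $\lmlt(Q)$-invariant subset. One must propagate the dichotomy through \emph{all} elements and assemble the resulting data into an actual invariant set or congruence; this is the substance of the implication, not a formality. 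Your proposed shortcut via Lemma~\ref{block stab} and ``the congruence generated by $\mathcal{P}$'' is not yet an argument: you have not identified that congruence, and without knowing its blocks you cannot check that the factor is disconnected or that some block-stabiliser fails to act transitively (indeed, when (B) occurs $\mathcal{P}$ need not be transitive, and the congruence it generates could well be $1_Q$).
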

In the following we are making use of the following subgroup
\begin{equation}\label{P_cyclic}
 \langle L_a\rangle^\P = \langle L_a\rangle\bigcap \mathrm{Aut}^\P.
\end{equation}

\begin{proposition}\label{doubly transitive factor}
Let $Q$ be a connected cyclic quandle with $f$ fixed points. Then $\P$ is the unique maximal congruence of $Q$ and $Q/\P$ is a doubly transitive quandle. 
\end{proposition}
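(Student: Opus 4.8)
The plan is to show first that $\P$ is a congruence, then that it is maximal, and finally that the factor $Q/\P$ is doubly transitive by identifying the cycle structure of its left multiplications. By Lemma \ref{know about cyclic}, since $Q$ is connected and cyclic with $f$ fixed points we have $|Q| > 2f$, the relation $\P$ is an equivalence, and $[a]_\P = Fix(L_a)$ for every $a\in Q$. Recall also from the preliminary section that the orbit decomposition with respect to the action of the normal subgroup $\lmlt^\P$ of $\aut{Q}$ is a congruence (see \cite[Theorem 6.1]{CP}); so a natural first step is to show that the equivalence $\P$ coincides with $\mathcal{O}_{\lmlt^\P}$, i.e.\ that its classes are exactly the $\lmlt^\P$-orbits. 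For this one uses that each nontrivial cycle of $L_a$ has length $|Q|-f$, that $[a]_\P = Fix(L_a)$ has size $f$, and that the action of $\dis(Q)$ permutes the $\P$-classes transitively; in particular all $\P$-classes have the same size $f$, so the quotient set $Q/\P$ has $|Q|/f$ elements.

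Next I would pin down the cycle structure of left multiplications in $Q/\P$. Since $L_a$ fixes pointwise its class $[a]_\P = Fix(L_a)$ and acts as a single $(|Q|-f)$-cycle on the complement, and the complement is a union of $\P$-classes each of size $f$, the induced permutation $L_{[a]_\P}$ on $Q/\P$ fixes $[a]_\P$ and cyclically permutes the remaining $(|Q|-f)/f = |Q/\P| - 1$ classes in a single cycle. Hence $Q/\P$ is a cyclic quandle with exactly one fixed point, i.e.\ a quandle whose left multiplications have one fixed point and one full cycle on the rest — this is precisely the definition of a doubly transitive quandle in the sense of \cite{V}. Connectedness of $Q/\P$ follows from connectedness of $Q$ (quotients of connected quandles are connected), and then \cite[Corollary 4]{V} (or the direct Frobenius-group argument: $\lmlt(Q/\P)_{[a]}$ contains $L_{[a]}$ which is a single cycle on $Q/\P \setminus \{[a]\}$, hence transitive there) gives that $\lmlt(Q/\P)$ is $2$-transitive.

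Finally, to see that $\P$ is the \emph{unique maximal} congruence, suppose $\beta$ is any congruence with $\P \leq \beta < 1_Q$. Since $Q/\P$ is doubly transitive, $\lmlt(Q/\P)$ is primitive, so $Q/\P$ is a simple quandle; hence the congruence of $Q/\P$ corresponding to $\beta/\P$ is either $0$ or $1$, forcing $\beta = \P$ or $\beta = 1_Q$. Thus $\P$ is maximal among proper congruences. It remains to rule out a maximal congruence $\beta$ incomparable to $\P$: if $\beta \not\geq \P$ then there are $a\, \P\, b$ with $a \not\mathbin{\beta} b$, but then $[a]_\beta$ meets the projection subquandle $[a]_\P = Fix(L_a)$ nontrivially while not containing all of it, and one argues — using that every block of $\beta$ must itself be a cyclic quandle (or a projection quandle) and that the blocks of $\beta$ are permuted transitively — that this contradicts the rigid cycle structure of the $L_a$'s. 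I expect this last incomparability step to be the main obstacle: showing that no congruence can "cut across" the fixed-point sets of the left multiplications. The cleanest route is probably to show directly that $[a]_\P \subseteq [a]_\beta$ for \emph{every} congruence $\beta$ — equivalently $\P \leq \beta$ — by exploiting that $L_a \in \lmlt(Q)$ fixes $[a]_\P$ pointwise and that a congruence block containing $a$ must be $L_a$-invariant, combined with the constraint that $\dis_\beta$ is transitive on each $\beta$-block (Lemma \ref{block stab}) yet $L_a$ has a single long cycle outside $Fix(L_a)$; this pins the block structure so that either $[a]_\beta \subseteq Fix(L_a)$ (giving a projection subquandle, handled as the degenerate case) or $[a]_\beta \supseteq Fix(L_a)$, i.e.\ $\P \leq \beta$. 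Once $\P \leq \beta$ for all proper $\beta$, uniqueness and maximality of $\P$ follow from simplicity of $Q/\P$ as above.
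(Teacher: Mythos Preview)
Your outline for the first two parts (showing $\P=\mathcal{O}_{\lmlt^\P}$ is a congruence, then reading off that $L_{[a]}$ has one fixed point and one $(|Q/\P|-1)$-cycle on $Q/\P$) matches the paper's approach, though the paper is more explicit: it computes $|\LP{a}|=f$ from the regular action of $\langle L_a\rangle/\LP{a}$ on $Q/\P\setminus\{[a]\}$, deduces $\LP{a}$ is regular on every $\P$-block other than $[a]_\P$, and hence $\lmlt^\P$ is transitive on each block. You assert this transitivity without saying why.

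The real issue is your uniqueness argument: you have the inclusion backwards. You aim to prove $\P\leq\beta$ for every proper congruence $\beta$, but that would make $\P$ the unique \emph{minimal} congruence, not the unique maximal one. The $L_a$-invariance idea you sketch actually gives the correct direction immediately: since $[a]_\beta$ is $L_a$-invariant it is a union of $L_a$-orbits, and the $L_a$-orbits are the $f$ singletons in $Fix(L_a)$ together with one cycle of length $|Q|-f$. If $[a]_\beta$ meets the big cycle it contains it, so $|[a]_\beta|\geq |Q|-f>|Q|/2$ (using $|Q|>2f$), and since all blocks have equal size this forces $\beta=1_Q$. Hence for every proper $\beta$ one has $[a]_\beta\subseteq Fix(L_a)=[a]_\P$, i.e.\ $\beta\leq\P$. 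No appeal to simplicity of $Q/\P$ is needed, and the ``incomparability obstacle'' you worry about simply does not arise.

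The paper takes a different route for uniqueness: it shows that $Sg(a,b)=Q$ whenever $[a]_\P\neq[b]_\P$ (since $a^{L_b}=Q\setminus[b]_\P$ and $b^{L_a}=Q\setminus[a]_\P$), so the $\P$-blocks are exactly the maximal proper subquandles, and any proper congruence has its blocks contained in them. Your corrected $L_a$-orbit argument is arguably slicker here, but the paper's $2$-generation statement is reused downstream (for faithfulness in Lemma~\ref{cyclic are faith} and the Frobenius-type facts feeding into Corollary~\ref{prime and cyclic} and Theorem~\ref{Cyclic}), so it is worth proving in its own right.
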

\begin{proof}
Since $\langle L_a \rangle$ is regular on $Q\setminus [a]_{\P}$ then the group $\langle L_a\rangle/\LP{a}$ acts regularly on the set of $\P$-blocks $Q/\P\setminus \{[a]\}$. So we have that 
\begin{equation}\label{orders}
|Q/\P|-1=\frac{|L_a|}{|\LP{a}|}=\frac{(|Q/\P|-1)f}{|\LP{a}|}.
\end{equation}
Therefore the subgroup subgroup $\LP{a}$ has order $f$ and so it is regular on each block of $\P$ different from $[a]$. The subgroup $\lmlt^\P$ contains $\LP{a}$ for each $a\in Q$, therefore $\lmlt^P$ is transitive on each block. Then $\P=\mathcal{O}_{\lmlt^\P}$ and according to \cite[Lemma 2.6]{CP} $\P$ is a congruence. Let $a,b\in Q$ in different classes with respect to $\P$. Then $a^{L_b}\bigcup b^{L_a}\subseteq Sg(a,b)$. By virtue of Lemma \ref{know about cyclic}, $a^{L_b}=Q\setminus [b]_\P$ and $b^{L_a}=Q\setminus [a]_\P$, therefore $Q$ is generated by $a$ and $b$. Hence, the blocks of $\P$ are the unique maximal subquandles of $Q$, and accordingly $\P$ is the unique maximal congruence of $Q$.\\
 The order of left multiplication in the factor $Q/\P$ is $|Q/\P|-1$ according to \eqref{orders}. Then the factor $Q/\P$ is doubly transitive \cite[Corollary 4]{V}.
\end{proof}
%
%\begin{proposition}\comment{need clarification}
%Let $Q$ be a $f_Q$-cyclic quandle and $\alpha\in Cong(Q)$.
%\begin{itemize}
%\item[(1)] $Q/\alpha$ is $f_{Q/\alpha}$-cyclic and $f_{Q/\alpha}$ divides $f_Q$.\comment{needed?}
%\item[(2)] $Q/P$ is a doubly transitive quandle. 
%%\comment{and $P=[1,1]$}. 
%\end{itemize}
%\end{proposition}
%\begin{proof}
%(1) The congruence $\alpha$ is contained in $\P$ and $Q=Fix(L_a)\cup \setof{L_a^k(b)}{ k\in \mathbb{N}}.$ Then $Q/\alpha=Fix(L_{[a]_\alpha})\cup \setof{L_{[a]_\alpha}^k([b]_\alpha)}{ k\in \mathbb{N}}$ for every $a,b\in Q$.
%
%(2) Since $Q/P$ is given by $Q/P=\{[a]_\P\}\cup \setof{[L_{[a]_\P}^k([b]_\P)]}{k\in \mathbb{N}}$, $Q/\P$ is doubly transitive. %\comment{The factor $Q/\P$ is abelian, then $\P\leq [1,1]$, and since $\P$ is maximal, $\P=[1,1]$.}
%\end{proof}
%

\begin{lemma}\label{cyclic are faith}
Let $Q$ be a connected cyclic quandle. Then $Q$ is faithful.
\end{lemma}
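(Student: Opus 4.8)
The plan is to use the structure of the congruence lattice of $Q$ established in Proposition~\ref{doubly transitive factor}, together with the known characterization $[a]_\P = Fix(L_a)$ from Lemma~\ref{know about cyclic}. First I would recall that $\lambda_Q$ is the kernel of the natural map $\Lambda_Q : Q \to L(Q) = Conj(\setof{L_a}{a\in Q})$, so $a\,\lambda_Q\,b$ exactly when $L_a = L_b$; faithfulness means $\lambda_Q = 0_Q$. Since $\lambda_Q$ is a congruence of $Q$ and, by Proposition~\ref{doubly transitive factor}, $\P$ is the unique maximal congruence of $Q$, either $\lambda_Q = 1_Q$, or $\lambda_Q \leq \P$ and in fact $\lambda_Q$ is one of the (few) congruences below $\P$, or $\lambda_Q = 0_Q$.

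Next I would rule out $\lambda_Q = 1_Q$: that would force $L_a = L_b$ for all $a,b$, hence (using idempotency) $a\ast b = L_a(b) = L_b(b) = b$ for all $a,b$, i.e.\ $Q$ is a projection quandle, contradicting connectedness (a connected quandle with more than one element has nontrivial displacement group, whereas $\dis(\mathcal P_n)=1$). So $\lambda_Q \leq \P$. Now I would use the inclusion $[a]_{\lambda_Q} \subseteq [a]_\P = Fix(L_a)$, which holds in general and is an equality of blocks precisely when $\lambda_Q$ agrees with $\P$. The key point is to show $\lambda_Q < \P$ is impossible once $\lambda_Q \neq 0_Q$, or more directly, to show $[a]_{\lambda_Q} = \{a\}$ for every $a$. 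Suppose $b \in [a]_{\lambda_Q}$ with $b \neq a$; then $L_a = L_b$, so in particular $a \in Fix(L_b) = [b]_\P$ and $b \in Fix(L_a) = [a]_\P$, so $a$ and $b$ lie in the same $\P$-block. But within a $\P$-block the quandle operation is the projection operation ($\{a,b\} = \mathcal P_2$ when $a\,\P\,b$), which tells us nothing obstructive yet; the real leverage is that $Q$ is connected, so by Lemma~\ref{block stab} applied to $\P$, the block stabilizer $\dis(Q)_{[a]_\P}$ acts transitively on $[a]_\P$, and I would combine this with the count from \eqref{orders} — that $\LP{a}$ has order $f$ and acts regularly on blocks other than $[a]$ — to force any element of $\dis(Q)$ acting as $L_a$ on all of $Q$ while fixing $a$ and $b$ to be trivial, whence $a=b$.

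The main obstacle I anticipate is making the last step precise: translating "$L_a=L_b$ with $b$ in the same $\P$-block as $a$" into a contradiction with connectedness. The cleanest route is probably to observe that $\P$ being the \emph{unique} maximal congruence and $\lambda_Q \leq \P$ means $Q/\lambda_Q$ is either a point, $\mathcal P_2$-type trivial, or has $\P/\lambda_Q$ as its unique maximal congruence; then note $Q/\lambda_Q$ is a faithful quandle (the map $\Lambda_Q$ factors through an injection on $Q/\lambda_Q$) which is still connected, and show that a faithful cyclic-type quandle whose $L_a$ already has $f' \geq 1$ fixed points cannot have arisen as a proper quotient collapsing a $\P$-block — because collapsing part of a $\P$-block would either destroy connectedness of the fibers (contradicting Lemma~\ref{block stab}) or leave $L_{[a]}$ without the required cycle structure. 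Alternatively, and perhaps more simply: since $\P = \mathcal O_{\lmlt^\P}$ and $\lambda_Q \subseteq \lambda_M$ for the block $M = [a]_\P$ viewed as a subquandle, and $M$ is a projection subquandle, any identification forced by $\lambda_Q$ inside $M$ is automatic — so the content is entirely about elements in \emph{different} $\P$-blocks, and there $L_a \neq L_b$ because $L_a$ fixes $[a]_\P$ pointwise while moving $b \notin [a]_\P$. This last observation, $a\,\lambda_Q\,b$ with $a\,\P\,b$ being the only case and $M$ projection making it vacuous, is I think what makes the lemma go through cleanly, and I would organize the final write-up around it rather than the block-stabilizer counting.
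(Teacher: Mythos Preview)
You correctly isolate the crux: once $\lambda_Q \leq \P$ is established (which follows, as you say, from $\P$ being the unique maximal congruence and $Q$ not being projection), the only case left is $L_a = L_b$ with $a \neq b$ in the same $\P$-block $M = [a]_\P$. But your final argument for this case is backwards. You note that $\lambda_Q|_{M\times M} \subseteq \lambda_M = M \times M$ since $M$ is projection, and declare the same-block identification ``vacuous''. That inclusion is trivially true and carries no information; what you need is the opposite conclusion, namely $\lambda_Q|_{M \times M} = 0_M$, i.e.\ that $L_a$ and $L_b$ differ somewhere on $Q \setminus M$ whenever $a \neq b$. Nothing in your sketch compares the actions of $L_a$ and $L_b$ outside $M$, and that is precisely where the content lies. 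Your earlier, vaguer suggestions (block-stabilizer counting, passing to $Q/\lambda_Q$) do not close the gap either, at least not as stated.

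The paper's argument handles this case directly. Pick $c \notin [a]_\P$. By the count around \eqref{orders}, $\LP{c}$ has order $f$ and acts regularly on each $\P$-block other than $[c]_\P$; in particular there is a unique $h \in \LP{c}$ with $h(a) = b$. From $L_a = L_b = L_{h(a)} = h L_a h^{-1}$ one obtains that $h$ commutes with $L_a$; since $h$ is a power of $L_c$ it fixes $c$, so $h(a*c) = L_a(h(c)) = a*c$. Now $[a*c]_\P = [a]_\P * [c]_\P \neq [c]_\P$ because $Q/\P$ is connected affine and hence has no projection subquandle. Thus $h$ fixes a point in a block other than $[c]_\P$, and regularity forces $h = 1$, whence $a = b$. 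Your remark about using the regularity of $\LP{a}$ to ``force an element of $\dis(Q)$ to be trivial'' was gesturing in this direction, but the group to use is $\LP{c}$ for $c$ \emph{outside} the block containing $a$ and $b$, and the element to pin down is the one carrying $a$ to $b$ inside that block.
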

\begin{proof}
 Assume that $L_a=L_b$. Then necessarily $a\, \P \, b$ and there exists $c\notin [a]_\P$, and $h\in \LP{c}$ such that $b=h(a)$. So $L_b=L_{h(a)}=h L_a h^{-1}=L_a$. Then $h(a\ast c)=a\ast h(c)=a\ast c$ and $[a\ast c]\neq [c]$ ($Q/\P$ is affine and connected, so it has no projection subquandle). Since $\LP{c}$ acts regularly on each block different of $[c]_\P$, then $h=1$, i.e. $a=b$.
\end{proof}
We can apply Theorem \ref{extension of minimal by proj of prime}(i) to cyclic quandles with a prime number of fixed points. Indeed if $Q$ is such a quandle, $Q$ is faithful, $Q/\P$ is strictly simple and $\dis(Q)_{a,b}=1$ whenever $b\notin [a]$ ($Q$ is generated by $a$ and $b$).
%
%by virtue of Lemmas \ref{all prop of cyclic}(i) then $\dis(Q)_{a,b}=1$ whenever $[a]_{\P}\neq [b]_{\P}$, according to Lemma {cyclic are faith} $Q$ is faithful and by Proposition \ref{doubly transitive factor} $Q/\P$ is strictly simple. 
\begin{corollary}\label{prime and cyclic}
Let $Q$ be a connected cyclic quandle with $p$ fixed points where $p$ is a prime. Then $p=2$ and $|Q|=6$.
\end{corollary}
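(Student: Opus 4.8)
The plan is to recognize a connected cyclic quandle $Q$ with $p$ fixed points (with $p$ prime) as a special case of the extension analyzed in Theorem \ref{extension of minimal by proj of prime}, and then feed it into case (i) of that theorem. First I would collect the structural facts already established for such a $Q$: by Lemma \ref{cyclic are faith} the quandle $Q$ is faithful; by Proposition \ref{doubly transitive factor} the relation $\P$ is the unique maximal congruence and $Q/\P$ is a doubly transitive quandle; and since the number of fixed points $p$ is prime, the blocks $[a]_\P = Fix(L_a)$ (Lemma \ref{know about cyclic}) are projection subquandles of prime size $p$. Moreover $Q/\P$, being finite, simple and doubly transitive, is in particular doubly homogeneous with more than two elements, hence latin and simple, hence strictly simple by Theorem \ref{minimal iff}. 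Thus $Q$ is precisely a finite faithful connected extension of a strictly simple quandle $Q/\P$ by a projection quandle of prime size $p$, with $\P = \gamma_Q$ by Lemma \ref{then 3 chain}.

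The next step is to identify which of the two alternatives of Lemma \ref{dis_a,b=1 equivalent} occurs. Here I would invoke the characterization in item (iii): for $b \notin [a]_\P$, the pair $a,b$ generates all of $Q$ by the argument already used in Proposition \ref{doubly transitive factor} (since $a^{L_b} = Q \setminus [b]_\P$ and $b^{L_a} = Q \setminus [a]_\P$ by Lemma \ref{know about cyclic}), so any element of $\dis(Q)$ fixing both $a$ and $b$ must be the identity, i.e. $\dis(Q)_{a,b} = 1$ whenever $b \notin [a]_\P$. By Lemma \ref{dis_a,b=1 equivalent} this means $\sigma_Q = \gamma_Q$. Therefore Theorem \ref{extension of minimal by proj of prime}(i) applies and gives $|Q| = 6$; since $|Q| = p \cdot |Q/\P|$ and $|Q/\P| > 2$ (it is strictly simple), the only possibility is $p = 2$ and $|Q/\P| = 3$.

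The main obstacle — already dispatched by the machinery of the previous section — is verifying that the hypotheses of Theorem \ref{extension of minimal by proj of prime} are genuinely met, in particular that $Q/\P$ is strictly simple (not merely simple) and that the extension is by a projection quandle (i.e. that the blocks $Fix(L_a)$ really are projection subquandles, which is where Lemma \ref{know about cyclic}'s identification $[a]_\P = Fix(L_a)$ is essential). Once these are in place, the corollary is a direct specialization, and the chain of reductions $\text{cyclic} \leadsto \text{extension of strictly simple by projection of prime size} \leadsto \sigma_Q = \gamma_Q \leadsto |Q| = 6$ closes the argument with no further computation.
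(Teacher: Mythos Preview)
Your proposal is correct and follows essentially the same route as the paper: verify that $Q$ is a faithful connected extension of the strictly simple quandle $Q/\P$ by a projection quandle of prime size, check condition (iii) of Lemma \ref{dis_a,b=1 equivalent} via $Q=Sg(a,b)$ for $b\notin[a]_\P$, and invoke Theorem \ref{extension of minimal by proj of prime}(i). The paper's own justification (stated just before the corollary) is terser but identical in substance; your version simply spells out why $Q/\P$ is strictly simple (via Theorem \ref{minimal iff}) and why $|Q/\P|>2$, which the paper leaves implicit.
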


\begin{theorem}\label{Cyclic}
Let $Q$ be a connected cyclic quandle with $f$ fixed points. Then $f=2$ and $|Q|=6$.
\end{theorem}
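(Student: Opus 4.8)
The plan is to induct on the number $f$ of fixed points, at each step passing to a proper quotient with strictly fewer fixed points and invoking Corollary \ref{prime and cyclic} as the base. Recall from Proposition \ref{doubly transitive factor} that $\P$ is the unique maximal congruence of $Q$, that its blocks are projection subquandles isomorphic to $\mathcal{P}_f$, and that $Q/\P$ is a doubly transitive quandle; hence $q:=|Q/\P|\ge 3$, $|Q|=qf$, $Q$ is faithful (Lemma \ref{cyclic are faith}), and $f\ge 2$. When $f$ is prime, Corollary \ref{prime and cyclic} already settles everything: it forces $f=2$ and $|Q|=6$ and rules out odd prime values of $f$. So the whole task is to eliminate composite $f$.

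The engine is the passage to quotients refining $\P$. First I would observe that if $\alpha$ is a congruence with $0_Q\le\alpha\le\P$ and $s$ denotes the (uniform, since $Q$ is connected) size of an $\alpha$-block, then $Q/\alpha$ is again a connected cyclic quandle, now with $f/s$ fixed points: the map $L_{[a]_\alpha}$ fixes precisely the $f/s$ blocks of $\alpha$ contained in the fixed block $[a]_\P$ — and nothing else, since $[a]_\P$ is the only fixed point of $L_{[a]_\P}$ on the doubly transitive quandle $Q/\P$ — while on the remaining $(q-1)f/s$ blocks it acts as a single cycle, those blocks forming an $L_a$-invariant partition of the single cycle of $L_a$ on $Q\setminus[a]_\P$; connectedness follows from $|Q/\alpha|=qf/s>2(f/s)$ and Lemma \ref{know about cyclic}. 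Next — and this I expect to be the main obstacle — I would prove that for every prime $p$ dividing $f$ the quandle $Q$ carries a congruence $\alpha\le\P$ with $p$-element blocks. The candidate comes from the cyclic subgroup $\LP{a}$ of \eqref{P_cyclic}, which by the proof of Proposition \ref{doubly transitive factor} has order $f$ and acts regularly on every $\P$-block other than $[a]_\P$, so its subgroup of order $p$ cuts those blocks into $p$-element pieces; the real work is to realize this local block system globally, say as the orbit congruence $\mathcal{O}_N$ of a suitable $N\in Norm(Q)$ inside $\dis_\P$, and to control the block sizes inside $[a]_\P$ as well. This is exactly the point where the cyclic cycle structure of all the $L_b$'s, rather than merely the congruence $\P$, is genuinely used, in the spirit of \cite{CQ}.

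Given these ingredients the induction is short. Assume the statement for all connected cyclic quandles with fewer than $f$ fixed points and let $f$ be composite. Choosing a prime $p\mid f$, the quotient $Q/\alpha$ above is connected cyclic with $f/p$ fixed points and $2\le f/p<f$, so by induction $f/p=2$ and $|Q/\alpha|=6$; thus $2q=6$, i.e. $q=3$ and $f=2p$. Applying the same construction with the prime $2$ (which divides $f$) produces a connected cyclic quotient with $f/2=p$ fixed points, so again by induction $p=2$. Therefore $f=4$, $q=3$ and $|Q|=12$.

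It remains to rule out a connected cyclic quandle of size $12$ with $4$ fixed points; such a $Q$ is a connected faithful extension of the strictly simple quandle $Q/\P\cong\aff(\mathbb{Z}_3,2)$ by the projection quandle $\mathcal{P}_4$. If $\P$ is minimal then, since $4$ is a prime power, Lemma \ref{abelianness of proj blocks and ss factor} makes $\P$ abelian, so $\dis_\P$ is an elementary abelian $2$-group on which $\widehat{L}_a$ restricts to a unipotent map inducing the identity on $\mathcal{P}_4$; using $\dis(Q)_{a,b}=1$ for $b\notin[a]$ ($Q$ is $2$-generated), the matrix identities from the proof of Theorem \ref{extension of minimal by proj of prime}(i) then force $\dis(Q)$ into a form incompatible with $|Q|=12$. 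If instead $\P$ is not minimal, it covers a congruence with $2$-element blocks and $Q$ maps onto the connected cyclic quandle of size $6$ with $2$ fixed points, being one of its $\mathcal{P}_2$-extensions, and a case analysis of those extensions (again via the structure of Section \ref{sec_extensions}) shows none is cyclic with $4$ fixed points. In all cases composite $f$ is impossible, so $f=2$ and $|Q|=6$, completing the induction.
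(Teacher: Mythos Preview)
Your outline is the paper's strategy, and the two gaps you yourself flag are precisely the ones the paper must close; what differs is the device used.

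For the congruence, the paper does not try to globalize the local $p$-block system coming from a subgroup of $\LP{a}$. Instead it first pins down the whole of $\lmlt^\P$: from $\LP{a}\cap\LP{b}=1$ for $[a]_\P\neq[b]_\P$ one gets $\mathbb{Z}_f^2\cong\LP{a}\times\LP{b}\le\lmlt^\P$, while $\lmlt(Q)_a\le\lmlt^\P$ is regular on each foreign block so $|\lmlt^\P|=f|\lmlt(Q)_a|\le f^2$; hence $\lmlt^\P\cong\mathbb{Z}_f^2$. Any characteristic subgroup of $\lmlt^\P$ is then normal in $\lmlt(Q)$ and produces an orbit congruence below $\P$ with uniform block size on \emph{every} $\P$-class, including $[a]_\P$, which is exactly the control you were missing. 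The paper takes $K=q\,\lmlt^\P$ for a prime $q\mid f$, so the blocks have size $f/q$ and $Q/\mathcal{O}_K$ is cyclic with $q$ fixed points; Corollary~\ref{prime and cyclic} then applies directly, giving $q=2$ and $|Q/\P|=3$ for each prime $q\mid f$, hence $f=2^k$ with no induction. Your dual choice (the $p$-torsion of $\lmlt^\P$, giving $p$-element blocks and a quotient with $f/p$ fixed points) is equally available once $\lmlt^\P\cong\mathbb{Z}_f^2$ is known, and your induction then lands at the same endpoint $f=2^k$, $|Q/\P|=3$.

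For the residual case $f=4$, $|Q|=12$, the paper observes that the chain $2^l\,\lmlt^\P$ shows any $f=2^k$ with $k\ge2$ admits a cyclic factor of size $12$, and then disposes of that size by an exhaustive computer check (the RIG database). Your proposed structural elimination is only a sketch: Theorem~\ref{extension of minimal by proj of prime} is stated for \emph{prime} block size, so its matrix identities do not carry over verbatim to $\mathcal{P}_4$-blocks, and the promised case analysis of $\mathcal{P}_2$-extensions of the size-$6$ quandle is not actually performed. As written, neither branch is a proof, though both are plausible avenues if you want to avoid the computer search.
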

\begin{proof}
The quandle $Q$ is generated by any pair of elements such that $[a]_\P\neq [b]_\P$. Then $$\LP{a}\bigcap \LP{b}\leq \lmlt(Q)_a\bigcap \lmlt(Q)_b=1$$ and so $\mathbb{Z}_f^2\cong \LP{a}\times \LP{b}\leq \lmlt^P$. Since $\lmlt(Q)_a\leq \lmlt^\P$ is regular on $[b]$, $|\lmlt^P|=f|\lmlt(Q)_a|\leq f^2$. Hence $\lmlt^P\cong \mathbb{Z}_f^2$.\\
Let $q$ be a prime dividing $f$. Then $K=q \lmlt^\P\cong q Z_{f}\times q Z_f$ is a characteristic subgroup of $\lmlt^\P$ and so it is normal in $\lmlt(Q)$. Then the congruence $\alpha=\mathcal{O}_K$ is contained in $\P$ and it has blocks of size $\frac{f}{q}$. The factor $Q/\alpha$ is a cyclic quandle, since $Q/\alpha= Fix(L_{[a]_{\alpha}}) \bigcup \setof{L_{[a]_\alpha}^k([b]_\alpha)}{k\in \mathbb{N}}$ of size $|Q/\alpha|=q|Q/\P|$. By virtue of Corollary \ref{prime and cyclic}, $q=2$ and $|Q/\P|=3$. Hence $f=2^k$ and the subgroups $2^l \lmlt^\P$ for every $0\leq l\leq k$ provide a chain of congruences with cyclic factor of size $3\cdot 2^l$. \\
An exhaustive computer search (for instance on the RIG database on GAP) shows that there are no such quandles for $k= 2$. Hence $k=1$ and $|Q|=6$.
\end{proof}
%A direct proof of Theorem \ref{Cyclic}, not involving any computer search, can be obtained arguing as in Theorem \ref{extension of minimal by proj of prime}.

There exists just two connected quandles of size $6$ and one of them is the unique connected cyclic quandle \cite{RIG}.

\section*{Acknowledgments}
We thank Giuliano Bianco and Daniele Toller for the fruitful discussion and the useful advices.  

\bibliographystyle{abbrv}
\bibliography{references} 
\end{document}